\theoremstyle{plain}
\newtheorem{theorem}{Theorem}[section]
\newtheorem{lemma}[theorem]{Lemma}
\newtheorem{corollary}[theorem]{Corollary}
\newtheorem{proposition}[theorem]{Proposition}
\theoremstyle{remark}
\newtheorem{remark}{Remark}[section]
\newcommand{\Rmnum}[1]{\expandafter\@slowromancap\romannumeral #1@}
\def\re{{\rm Re}}
\def\ri{{\rm i}}
\def\rd{{\rm d}}
\def\rrw{\rightarrow}
\def\bt{\mathbb T}
\def\br{\mathbb R}
\def\bn{\mathbb N}
\def\bz{\mathbb Z}
\def\bc{{\mathbb C}}
\def\res{\mathop{\rm{Res}}}
\def\rint{\int\limits}
\def\rrw{\rightarrow}
\numberwithin{equation}{section}
\providecommand{\keywords}[1]
{
  \small
  \textbf{\textit{Keywords---}} #1
  \normalsize
}
\providecommand{\MSC}[1]
{
  \small
  \textbf{\textit{Mathematics Subject Classification---}} #1
  \normalsize
}
\title{\LARGE On the representation functions of certain\\ numeration systems}
\author{Nian Hong \textsc{Zhou}\thanks{The authors was partially supported by FWF Austrian Science Fund grant P 32305.}}
\date{}
\begin{document}

\maketitle


\begin{abstract}
Let $\beta>1$ be fixed. We consider the $(\frak{b, d})$ numeration system, where the base ${\frak b}=(b_k)_{k\geq 0}$ is a sequence of positive real numbers
satisfying $\lim_{k\rightarrow \infty}b_{k+1}/b_k=\beta$, and the set of digits ${\frak d}\ni 0$ is a finite set
of nonnegative real numbers with at least two elements. Let $r_{\frak{b, d}}(\lambda)$ denote the number of representations of a given
$\lambda\in\mathbb{R}$ by sums $\sum_{k\ge 0}\delta_kb_k$ with $\delta_k$ in ${\frak d}$. We establish upper bounds and asymptotic formulas
for $r_{\frak{b,d}}(\lambda)$ and its arbitrary moments, respectively.
We prove that the associated zeta function $\zeta_{\frak{b, d}}(s):=\sum_{\lambda>0}r_{\frak{b, d}}(\lambda)\lambda^{-s}$ can be meromorphically continued to the entire complex plane when $b_k=\beta^{k}$, and to the half-plane $\re(s)>\log_\beta |\frak{d}|-\gamma$ when $b_k=\beta^{k}+O(\beta^{(1-\gamma)k})$, with any fixed $\gamma\in(0,1]$, respectively. We also determine
the possible poles, compute the residues at the poles, and locate the trivial zeros of $\zeta_{\frak{b, d}}(s)$ in the regions where it can be extended. As an application, we answer some problems posed by Chow and Slattery on partitions into distinct terms of certain integer sequences.
\end{abstract}

\keywords{Numeration system; Representations; Zeta functions; Analytic continuation}

\MSC{Primary 11A63; Secondary 05A16, 11M41, 11B39}

\maketitle

\section{Introduction}
In this paper, we investigate a generalization of the classical numeration systems. Specifically, we consider numeration systems consisting of a base sequence $\frak{b}$ and a digit set $\frak{d}$ and represent real number $\lambda$ as the sum $\sum_{k\ge 0}\delta_k b_k$ with $\delta_k\in \frak{d}$, where the base sequence $\frak{b}=(b_k)_{k\ge 0}$ is a \emph{sequence of positive real numbers that satisfies $\lim_{k\to \infty}({b_{k+1}}/{b_k})=\beta$ for some real number constant $\beta>1$}, and the digit set $\frak{d}\ni 0$ is a \emph{finite set of non-negative real numbers} with at least two elements. The set $\mathcal{N}_{\frak{b,d}}$ contains all real numbers that can be represented in this way, that is,
\begin{equation*}\label{eqda}
\mathcal{N}_{\frak{b, d}}
=\left\{\sum_{k\ge 0}\delta_kb_k :(\delta_0, \delta_1,\delta_2,\ldots)\in {\frak d}^{\bn_0} \right\}.
\end{equation*}
We define $r_{\frak{b,d}}(\lambda)$ as the number of ways that $\lambda\in\br$ can be represented as the sum $\sum_{k\ge 0}\delta_kb_k $, where $\delta_k$ are integers from the set ${\frak d}$. In other words, $r_{\frak{b,d}}(\lambda)$ is the cardinality of the set
\begin{equation}\label{eqr}
\mathcal{R}_{\frak{b, d}}(\lambda)
=\Big\{(\delta_0, \delta_1,\delta_2,\ldots)\in {\frak d}^{\bn_0}:\sum_{k\ge 0}\delta_kb_k=\lambda\Big\}.
\end{equation}
For any $x\in\mathbb{R}$, we define the counting function of the $(\frak{b, d})$ numeration system as
\begin{equation}\label{eqda1}
S_{\frak{b, d}}(x)
=\sum_{0\le \lambda \le x}r_{\frak{b,d}}(\lambda)
=\Big|\Big\{(\delta_0, \delta_1,\delta_2,\ldots)\in {\frak d}^{\bn_0}:\sum_{k\ge 0}\delta_kb_k\le x\Big\}\Big|.
\end{equation}
For any $c>0$, if we denote $c\frak{b}:=(c b_k)_{k\ge 0}$, then it is clear that
$S_{c\frak{b}, \frak{d}}(cx)=S_{\frak{b, d}}(x)$.
When $\frak{b}=(\beta^k)_{k\ge 0}$, we write $S_{\beta, \frak{d}}(x)$.

\medskip

The first aim of this paper is to study the upper bound of $r_{\frak{b,d}}(\lambda)$ as well as the asymptotics of $S_{\frak{b, d}}(x)$. It is worth noting that previous relevant research literature typically defines the digit set as $\frak{d}=\{0,1,\ldots,q-1\}$ with $q\ge 2$ being an integer, rather than a finite set of non-negative real numbers. Dumont-Sidorov-Thomas \cite[Theorem 2.1]{MR1690457} established the asymptotics for $S_{\frak{b,d}}(x)$ in the case where $b_k$ satisfies a linear recurrence, and Feng-Liardet-Thomas \cite[Theorem 14]{MR3237075} established the asymptotics for $S_{\frak{b,d}}(x)$ in the case where $b_k\sim c\gamma^{k}$ with constants $c>0$ and
$\gamma>1$. Moreover, in the previous literature, the non-trivial upper bound of $r_{\frak{b,d}}(\lambda), \lambda\in\mathcal{N}_{\frak{b,d}}$ is only studied for the case when $b_k=d^k$ with $d\in\bn$ in \cite[Section 6]{MR1690457}, and for the case when $b_k\sim c\gamma^k$ with $\frak{d}=\{0,1,\ldots,\lfloor\gamma\rfloor\}$ in \cite[Proposition 16]{MR3237075}, where $\lfloor x\rfloor$ denotes the largest integer not exceeding $x$; and for the case when $b_k$ is a Pisot scale in \cite[Section 3]{MR3237075}.  It is worth noting that the special case where $\frak{b}=(F_{k+2})_{k\ge 0}$
with $F_k$ being the $k$-th Fibonacci number, and $\frak{d}=\{0,1\}$, gives us $R_F(n):=r_{\frak{b,d}}(n)$,
which is the number of representations of $n$ as a sum of distinct Fibonacci numbers. The function $R_F(n)$ is called the \emph{Fibonacci partition function}. A well-known result, due to Pushkarev \cite{MR1374325}, is that $R_{F}(n)=O(\sqrt{n})$.
Moreover, this upper bound is known to be optimal. Chow and Slattery \cite{MR4235264} given an exact formula and established a tight bound for the average sum $S_{\frak{b, d}}(x)$ of $R_{F}(n)$. They \cite[p.315]{MR4235264} also posed a conjecture regarding the logarithmic average of $R_F(n)$, and some problems on partitions into distinct terms of certain integer sequences, which are related to (1) higher moments of $R_{F}(n)$, as well as its analogs for (2) Lucas partitions, and (3) partitions into distinct terms of a sequence $(\lfloor\tau^m\rfloor)_{m\ge 1}$, where $\tau\in(1,2)$ is fixed. For a more specific introduction to related works, see Dumont--Sidorov--Thomas \cite{MR1690457}, Feng--Liardet--Thomas \cite{MR3237075}, and Chow-Slattery \cite{MR4235264}, as well as their references.

\medskip

To statement of the main results in this paper, we need the following definitions. We define $\kappa_{\beta, \frak{d}}$ to be the largest positive real number $u$ such that $\sum_{k\ge 1}\beta^{-\lfloor k/u\rfloor}(\max_{\delta\in\frak{d}}\delta)\le 1$. It is clear that $\kappa_{\beta,\frak{d}}\le 1$.
For $\beta\in\bn$ and $\frak{d}\subset \bn_0$ with a greatest common divisor $\gcd(\frak{d})=1$, we write $\mu_{\beta,\frak{d}}:=\max_{a\in \bz}|\{\delta\in\frak{d}: \delta\equiv a\pmod \beta\}|$. It is clear that $\max(1,\beta^{-1}|\frak{d}|)\le \mu_{\beta,\frak{d}}\le |\frak{d}|-1$.
A complex-valued function on $\bt:=\br/\bz$ is a complex-valued function $f$ on $\br$ that is $1$-periodic. Let ${\rm BV}(\bt)$ be the set of all bounded variation complex-valued functions on $\bt$. A complex valued function $f$ is called a $1$-periodic Lipschitz function of order $\eta\in (0,1]$ if
$$\sup_{t\in\bt, \Delta\neq 0}\frac{|f(t+\Delta)-f(t)|}{|\Delta|^\eta}<+\infty.$$
The set of all $1$-periodic Lipschitz functions of order $\eta$ on $\mathbb{T}$ is denoted by ${\rm Lip}_\eta(\mathbb{T})$. Finally, we define for any $x\in\br$ that
\begin{align}\label{eqpsi}
\Psi_{\beta, \frak{d}}(x)
=\frac{1}{|\frak{d}|^{x}}S_{\beta, \frak{d}}(\beta^x)-|\frak{d}|^{-1}\sum_{h\ge 0}\frac{1}{|\frak{d}|^{h+x}}
\sum_{\delta\in \frak{d}}\left(S_{\beta, \frak{d}}(\beta^{h+x})-S_{\beta, \frak{d}}(\beta^{h+x}-\beta^{-1}\delta)\right).
\end{align}
We first present the following proposition, which will be utilized in the statement of our main results. The proof of this proposition will be carried out in Section \ref{sec22}.
\begin{proposition}\label{lemm20}
The series \eqref{eqpsi} converges absolutely for all $x\in\mathbb{R}$. Furthermore, for any $\beta>1$, we have $\Psi_{\beta, \frak{d}}(x)\in {\rm BV}(\bt)\cap {\rm Lip}_{\eta}(\bt)$ for any positive constant $\eta<\kappa_{\beta,\frak{d}}\log_{\beta}|\frak{d}|$, and $\Psi_{\beta, \frak{d}}(x)$ is strictly positive. Moreover, for $\beta\in\bn$ and $\frak{d}\subset \bn_0$ with a greatest common divisor $\gcd(\frak{d})=1$, we have $\Psi_{\beta, \frak{d}}(x)\in {\rm Lip}_{\eta}(\bt)$ with $\eta=\log_\beta|\frak{d}|-\log_\beta(\mu_{\beta,\frak{d}})$.
\end{proposition}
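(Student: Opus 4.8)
The plan is to first establish absolute convergence and positivity, then a functional/self-similarity equation that makes $\Psi_{\beta,\frak d}$ genuinely $1$-periodic, and finally the regularity (bounded variation and Lipschitz) estimates. For the convergence: fix $x$ and bound the $h$-th term of the series in \eqref{eqpsi}. Since $S_{\beta,\frak d}(\beta^{h+x})-S_{\beta,\frak d}(\beta^{h+x}-\beta^{-1}\delta)$ counts representations $\sum_k\delta_k\beta^k$ lying in an interval of length $\beta^{-1}\delta\le\beta^{-1}\max_{\delta\in\frak d}\delta$, a crude count (splitting according to the low-order digits $\delta_0,\ldots,\delta_m$ for an appropriate $m$ depending on $h+x$, and using the upper bound on $r_{\beta,\frak d}$ presumably proved earlier in the paper in the form $r_{\beta,\frak d}(\lambda)\ll\lambda^{\log_\beta|\frak d|-\delta_0}$ for some $\delta_0>0$, or more simply $S_{\beta,\frak d}(\beta^{h+x})\ll|\frak d|^{h+x}$) shows the $h$-th term is $O(|\frak d|^{-h}\cdot(\text{something growing slower than }|\frak d|^h))$, giving a convergent geometric-type series; this also yields an explicit tail bound uniform in $x$ on compact sets. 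Positivity is immediate once one checks $\Psi_{\beta,\frak d}$ is a legitimate (nonnegative, not identically zero) density-type object: the cleanest route is to derive, from the recursion $S_{\beta,\frak d}(\beta x)=\sum_{\delta\in\frak d}S_{\beta,\frak d}(x-\beta^{-1}\delta)$ reflecting "peeling off the top digit", a telescoping identity showing $\Psi_{\beta,\frak d}(x+1)=\Psi_{\beta,\frak d}(x)$ and that $\Psi_{\beta,\frak d}(x)=\lim_{N\to\infty}|\frak d|^{-N-x}S_{\beta,\frak d}(\beta^{N+x})$ up to a positive correction, hence strictly positive since $S_{\beta,\frak d}(\beta^{N+x})\ge1$ always and in fact grows like $|\frak d|^{N}$.

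For the $\mathrm{BV}$ and Lipschitz statements I would argue as follows. Write $G(x):=|\frak d|^{-x}S_{\beta,\frak d}(\beta^x)$, so that $\Psi_{\beta,\frak d}$ is $G$ minus a convergent sum of rescaled increments of $G$. The function $x\mapsto S_{\beta,\frak d}(\beta^x)$ is a nondecreasing step function, so $G$ differs from a nondecreasing function by the smooth factor $|\frak d|^{-x}$, hence $G\in\mathrm{BV}$ on any bounded interval, and the second term in \eqref{eqpsi} is a uniformly convergent sum of $\mathrm{BV}$ functions whose total variations are summable (same geometric bound as above), so $\Psi_{\beta,\frak d}\in\mathrm{BV}(\bt)$. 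For $\mathrm{Lip}_\eta$ the key input is a modulus-of-continuity estimate for $S_{\beta,\frak d}$: I claim $S_{\beta,\frak d}(\beta^{x})-S_{\beta,\frak d}(\beta^{x}-t)\ll t^{\kappa_{\beta,\frak d}\log_\beta|\frak d|}\cdot(\text{poly in }\beta^x)$ for $0\le t\le1$, uniformly, which is precisely where the constant $\kappa_{\beta,\frak d}$ enters. This is proved by the definition of $\kappa_{\beta,\frak d}$: if $\sum_{k\ge1}\beta^{-\lfloor k/u\rfloor}\max_\delta\delta\le1$, then representations of numbers in a short interval $[\beta^x-t,\beta^x]$ are forced to have their first $\approx u\log_\beta(1/t)$ digits determined (or confined to $o(1)$ many choices), leaving at most $|\frak d|^{(\text{total length})-u\log_\beta(1/t)}$ choices for the rest; optimizing gives the exponent $u\log_\beta|\frak d|=\kappa_{\beta,\frak d}\log_\beta|\frak d|$. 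Combining with the summable tail of \eqref{eqpsi} and a standard splitting of the sum at the index $h\asymp\log_\beta(1/|\Delta|)$ (estimating small-$h$ terms by the Lipschitz bound and large-$h$ terms by the tail bound) yields $\Psi_{\beta,\frak d}\in\mathrm{Lip}_\eta(\bt)$ for every $\eta<\kappa_{\beta,\frak d}\log_\beta|\frak d|$.

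For the sharper statement in the integer case $\beta\in\bn$, $\frak d\subset\bn_0$, $\gcd(\frak d)=1$: here $S_{\beta,\frak d}$ jumps only at integers, and the relevant count $S_{\beta,\frak d}(\beta^{h+x})-S_{\beta,\frak d}(\beta^{h+x}-\beta^{-1}\delta)$ over a dyadic-scale interval is controlled by how many digit strings of a given length represent integers in a fixed residue class / short window. Fixing the low-order digits one at a time, at each of the $L$ lowest positions the residue constraint modulo $\beta$ limits the choice to at most $\mu_{\beta,\frak d}$ digits, so the count over an interval of length $\asymp\beta^{-L}\cdot\beta^{h+x}$ is $\ll\mu_{\beta,\frak d}^{L}|\frak d|^{(h+x)-L}$; choosing $L\asymp\log_\beta(1/|\Delta|)$ converts this into the Lipschitz bound with exponent $\eta=\log_\beta|\frak d|-\log_\beta\mu_{\beta,\frak d}$, and the tail of \eqref{eqpsi} is handled exactly as before. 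The main obstacle, and the step deserving the most care, is the uniform modulus-of-continuity estimate for $S_{\beta,\frak d}$ on short intervals with the exponent $\kappa_{\beta,\frak d}\log_\beta|\frak d|$ — i.e.\ showing that the extremal choice of $u$ in the definition of $\kappa_{\beta,\frak d}$ really is the one that governs how many representations can crowd into a short interval, since for general real bases $b_k$ (even with $b_k=\beta^k$) carries and near-collisions among the sums $\sum\delta_k\beta^k$ have to be ruled out combinatorially rather than by a clean base-$\beta$ digit argument; everything else is bookkeeping with the geometric tail bound established at the start.
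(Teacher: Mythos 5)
Your high-level plan (establish periodicity from a telescoping recursion, deduce convergence from a geometric tail bound, derive Lipschitz continuity by splitting the defining series at an index $h\asymp\log_\beta(1/|\Delta|)$ and using a modulus-of-continuity estimate for $S_{\beta,\frak d}$) is essentially the route the paper takes, and the integer-case improvement via the residue-class count $\mu_{\beta,\frak d}$ is also on target. However, two of your steps have genuine gaps as written.

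First, your bounded-variation argument relies on the claim that the total variations of the individual terms
$|\frak d|^{-1-h-x}\sum_{\delta}\bigl(S_{\beta,\frak d}(\beta^{h+x})-S_{\beta,\frak d}(\beta^{h+x}-\beta^{-1}\delta)\bigr)$
over a period are summable in $h$. They are not: each term is a difference of two monotone step functions whose total increase over $x\in[0,1]$ is of order $|\frak d|^{h}$, so after dividing by $|\frak d|^{h+x}$ the total variation of the $h$-th term is of order $1$, not geometrically small (the toy case $\beta=2$, $\frak d=\{0,1\}$ already shows this). There is cancellation across terms, but a termwise bound cannot see it. The paper instead proves a \emph{uniform} bound on the total variation of the partial sums $|\frak d|^{-n-x}S_{\beta,\frak d}(\beta^{n+x})$ over a period — roughly $T_{b-1}^{b}\le 2|\frak d|\cdot|\frak d|^{-n-b}S_{\beta,\frak d}(\beta^{n+b})$, which is bounded in $n$ — and then invokes lower semicontinuity of total variation under pointwise limits. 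You would need this (or an equivalent cancellation-exploiting argument), not summability of termwise variations.

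Second, your positivity step asserts that $S_{\beta,\frak d}(\beta^{N+x})$ ``in fact grows like $|\frak d|^{N}$,'' but this is precisely the content of strict positivity and cannot be taken for granted; $S\ge 1$ alone only gives $\Psi\ge 0$ in the limit. The missing idea is to pick $x$ large enough that all $N$ low-order digits are free, namely $\log_\beta x\ge(\beta-1)^{-1}\max_{\delta\in\frak d}\delta$, so that $\sum_{0\le k<N}\delta_k\beta^k<\beta^{N+x}$ for every digit string; this yields $S_{\beta,\frak d}(\beta^{N+x})\ge|\frak d|^{N}$ hence $\Psi_{\beta,\frak d}(x)\ge|\frak d|^{-x}>0$, and then periodicity extends strict positivity to all $x$. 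Finally, your stated modulus of continuity $S_{\beta,\frak d}(\beta^{x})-S_{\beta,\frak d}(\beta^{x}-t)\ll t^{\kappa\log_\beta|\frak d|}\cdot\mathrm{poly}(\beta^{x})$ cannot hold uniformly as $t\to0^{+}$ (the left side is bounded below by a single representation count $r_{\frak b,\frak d}(\lambda)$, which does not vanish); the correct form is the short-interval bound $S_{\frak b,\frak d}(y)-S_{\frak b,\frak d}(y-\Delta)\le(1+\Delta)\,y^{(1-\kappa)\log_\beta|\frak d|+\varepsilon}$, and your ``optimize $u$'' heuristic then recovers the exponent only after choosing the truncation $n$ with $\beta^{n}\asymp 1/|\Delta|$ as in the paper.
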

 According to Zygmund \cite[Theorem (3.6), p.241]{MR1963498} or Katznelson \cite[Theorem (Zygmund), p.35]{MR2039503}, the fact that $\Psi_{\beta, \frak{d}}(x)\in {\rm BV}(\bt)\cap {\rm Lip}_{\eta}(\bt)$ for some constant $\eta>0$ implies that $\Psi_{\beta, \frak{d}}(x)$ has an absolutely convergent Fourier series.
More precisely, we will prove the following proposition in Section \ref{sec31}.
\begin{proposition}\label{pro36}For any $x\in\br$ we have $\Psi_{\beta,\frak{d}}(x)
=\sum_{k\in\bz}\widehat{\Psi}_{\beta,\frak{d}}(k)e^{2\pi\ri k x}$ with
$$\widehat{\Psi}_{\beta,\frak{d}}(k)=\frac{(-1)^k\sqrt{\frak{d}}}{\Gamma\left(1+\log_\beta|\frak{d}|+2\pi (\log \beta)^{-1}k\ri\right)}\rint_{\bt}\prod_{h\in\bz}\left(\frac{\sum_{\delta\in \frak{d}}\exp(-\delta \beta^{h+w-1/2})}{
\sum_{\delta\in \frak{d}}\exp(-\delta \beta^{h+w+1/2})}\right)^{h+w}e^{2\pi\ri k w}\rd w.$$
\end{proposition}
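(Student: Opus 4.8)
The plan is to derive the Fourier coefficients of $\Psi_{\beta,\frak{d}}$ by relating $\Psi_{\beta,\frak{d}}$ to the zeta function $\zeta_{\beta,\frak{d}}(s)$ via a Mellin transform, extracting the residues at the poles, and then recognizing the Fourier coefficients as the arithmetic of those residues. First I would establish that $S_{\beta,\frak{d}}(x)$ satisfies a functional equation coming from the self-similar structure of the numeration system: every representation $\sum_k \delta_k\beta^k \le x$ with leading digit $\delta_0$ corresponds to a representation $\sum_k \delta_{k+1}\beta^k \le (x-\delta_0)/\beta$, so that $S_{\beta,\frak{d}}(x) = \sum_{\delta\in\frak{d}} S_{\beta,\frak{d}}((x-\delta)/\beta)$ (this is essentially what \eqref{eqpsi} is engineered around). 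Rewriting this in the variable $x \mapsto \beta^x$ and dividing by $|\frak{d}|^x$ shows that the main growth of $|\frak{d}|^{-x}S_{\beta,\frak{d}}(\beta^x)$ is a constant perturbed by a $1$-periodic fluctuation, and the precise definition \eqref{eqpsi} of $\Psi_{\beta,\frak{d}}$ is exactly the $1$-periodic function capturing this fluctuation. By Proposition~\ref{lemm20} it lies in ${\rm BV}(\bt)\cap{\rm Lip}_\eta(\bt)$, hence has an absolutely convergent Fourier series, so the remaining task is purely to identify $\widehat{\Psi}_{\beta,\frak{d}}(k)$.

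Next I would compute $\widehat{\Psi}_{\beta,\frak{d}}(k) = \int_{\bt}\Psi_{\beta,\frak{d}}(x)e^{-2\pi\ri k x}\rd x$ by passing to a Mellin-type integral. Consider the Dirichlet series $\sum_{\lambda>0} r_{\frak b,\frak d}(\lambda)e^{-\lambda t}$ for $t>0$, which factors as the infinite product $\prod_{h\ge 0}\sum_{\delta\in\frak{d}}e^{-\delta\beta^h t}$ because the representation is a free choice of digit in each position. Taking the logarithm of the normalized tail of this product and comparing consecutive factors produces exactly the ratio $\bigl(\sum_{\delta}e^{-\delta\beta^{h+w-1/2}}\bigr)\big/\bigl(\sum_\delta e^{-\delta\beta^{h+w+1/2}}\bigr)$ appearing in the statement; raising it to the power $h+w$ and integrating over $w\in\bt$ is the Mellin/Fourier inversion that recovers the periodic part. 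The Gamma factor $\Gamma(1+\log_\beta|\frak{d}|+2\pi(\log\beta)^{-1}k\ri)^{-1}$ enters through the standard Mellin transform identity $\int_0^\infty e^{-t}t^{s-1}\rd t = \Gamma(s)$ applied at $s = 1 + \log_\beta|\frak{d}| + 2\pi(\log\beta)^{-1}k\ri$, i.e. the point on the critical line $\re(s) = \log_\beta|\frak{d}|$ shifted by the $k$-th frequency, which is where the poles of $\zeta_{\beta,\frak{d}}(s)$ sit. The sign $(-1)^k$ and the factor $\sqrt{\frak{d}}$ (meaning $|\frak{d}|^{1/2}$, arising from the symmetric $\pm 1/2$ split in \eqref{eqpsi}) are bookkeeping constants that fall out of keeping careful track of the shift $x\mapsto x\pm\tfrac12$ built into the definition of $\Psi_{\beta,\frak{d}}$.

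Concretely, the steps in order are: (1) prove the renewal-type functional equation for $S_{\beta,\frak{d}}$ and verify that $\Psi_{\beta,\frak{d}}$ as defined in \eqref{eqpsi} is genuinely $1$-periodic; (2) express $\int_{\bt}\Psi_{\beta,\frak{d}}(x)e^{-2\pi\ri kx}\rd x$ as an integral involving $|\frak{d}|^{-x}S_{\beta,\frak{d}}(\beta^x)$ and, by Abel summation / integration by parts against the measure $\rd S_{\beta,\frak{d}}$, convert it into a genuine Mellin transform $\int_0^\infty (\cdots)\,t^{s-1}\rd t$ at $s=1+\log_\beta|\frak{d}|+2\pi(\log\beta)^{-1}k\ri$; (3) substitute the product formula $\sum_{\lambda>0}r_{\frak b,\frak d}(\lambda)e^{-\lambda t} = \prod_{h\ge0}\sum_{\delta\in\frak{d}}e^{-\delta\beta^h t}$, telescope the product after the change of variable $t = \beta^{-w-1/2}$ (or similar), and collect the surviving factors into $\prod_{h\in\bz}\bigl(\cdots\bigr)^{h+w}$; (4) assemble the constants $(-1)^k|\frak{d}|^{1/2}$ and the Gamma factor. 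The main obstacle will be step (3): justifying the interchange of the infinite product with the integral and showing that after telescoping the doubly-infinite product $\prod_{h\in\bz}$ actually converges — this requires the asymptotics $\sum_{\delta}e^{-\delta\beta^{h+w\pm1/2}} \to |\frak{d}|$ as $h\to+\infty$ and $\to 1$ as $h\to-\infty$ (only the digit $0$ survives), so that the ratio tends to $1$ fast enough that $(\text{ratio})^{h+w}$ is summable over $h\in\bz$; controlling the rate, uniformly in $w\in\bt$, is exactly where the hypothesis $b_k=\beta^k$ (pure exponential base) and the Lipschitz regularity from Proposition~\ref{lemm20} are used, and it is the one genuinely delicate estimate in the argument.
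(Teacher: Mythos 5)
Your outline has the right cast of characters — the generating-function product, a Mellin transform, the $\pm\tfrac12$ shift producing $(-1)^k\sqrt{|\frak{d}|}$, and the Gamma factor — but step~(2) as you describe it would not go through, and it is precisely the step the paper is at pains to circumvent. Writing
$\widehat{\Psi}_{\beta,\frak{d}}(k)=\int_{\bt}\Psi_{\beta,\frak{d}}(x)e^{-2\pi\ri kx}\,\rd x$
and substituting $\Psi_{\beta,\frak{d}}(x)=\lim_n|\frak{d}|^{-n-x}S_{\beta,\frak{d}}(\beta^{n+x})$ gives, after the change of variables $u=\beta^{n+x}$, a limit of integrals over single dyadic-type blocks $[\beta^n,\beta^{n+1}]$; this is \emph{not} a Mellin transform over $(0,\infty)$ and cannot be turned into one by Abel summation, because the integrand is periodically normalized rather than decaying (summing the blocks diverges). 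The paper says as much: ``it is quite difficult to find the exact value of $\widehat{\Psi}_{\beta,\frak{d}}(k)$ by this integration,'' and switches to a different mechanism.

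What the paper actually does is produce two independent $t\to 0^+$ asymptotics for the generating function and \emph{match their Fourier expansions}. On one side, the Euler--Maclaurin identity of Proposition~\ref{prop1} (and then Lemma~\ref{lem32}/Lemma~\ref{lem34}) gives
$t^{\log_\beta|\frak{d}|}{\rm Z}_{\beta,\frak{d}}(e^{-t})=e^{P_{\beta,\frak{d}}(\log_\beta t)}(1+O(t))$,
where $P_{\beta,\frak{d}}$ is the explicit telescoped sum whose exponential is your $\sqrt{|\frak{d}|}\prod_{h\in\bz}(\cdots)^{h+w}$ up to the $w\mapsto w-\tfrac12$ shift. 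On the other side, one integrates by parts to get
${\rm Z}_{\beta,\frak{d}}(e^{-t})=t\int_0^\infty S_{\beta,\frak{d}}(u)e^{-tu}\,\rd u$,
replaces $S_{\beta,\frak{d}}(u)$ by $u^{\log_\beta|\frak{d}|}\Psi_{\beta,\frak{d}}(\log_\beta u)$ (justified by the scaling limit \eqref{eqdas1} and dominated convergence), unfolds the integral using $1$-periodicity, and then applies the inverse Mellin representation of the Gamma function to $\sum_{k\ge 0}(\beta^{k+u}t)^{1+\log_\beta|\frak{d}|}e^{-\beta^{k+u}t}$; shifting the contour past the poles of $(1-\beta^{-s})^{-1}$ at $s=2\pi(\log\beta)^{-1}k\ri$ produces the Fourier series
$\sum_{k\in\bz}\Gamma\bigl(1+\log_\beta|\frak{d}|-\tfrac{2\pi k\ri}{\log\beta}\bigr)\widehat{\Psi}_{\beta,\frak{d}}(-k)\,e^{2\pi\ri k\log_\beta t}$
as $t\to 0^+$. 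Equating the two expansions in the variable $\log_\beta t$ and reading off Fourier coefficients gives
$\Gamma\bigl(1+\log_\beta|\frak{d}|+\tfrac{2\pi k\ri}{\log\beta}\bigr)\widehat{\Psi}_{\beta,\frak{d}}(k)=\int_0^1 e^{P_{\beta,\frak{d}}(w)}e^{2\pi\ri kw}\,\rd w$,
and the final $w\mapsto w-\tfrac12$ substitution delivers the stated formula. So the gap in your plan is the \emph{comparison of two asymptotics}: it is not a single Mellin transform computation of the Fourier integral, but an identity between two Fourier series obtained by evaluating ${\rm Z}_{\beta,\frak{d}}(e^{-t})$ two different ways; without this your steps (2)--(3) do not assemble into the claimed formula.
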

The following are our first results. The proofs of these results, namely Theorems \ref{th2} and \ref{th1}, and Corollary \ref{cor1}, will be presented in Section \ref{sec2}.
\begin{theorem}\label{th2}Let the basis $\frak{b}=(\beta^k)_{k\ge 0}$ with $\beta\in\bn$ and let the set of digits $\frak{d}\subset \bn_0$ has the greatest common divisor $\gcd(\frak{d})=1$. Then, for any $n\in\bn$, we have
\begin{equation}\label{eqrcs}
r_{\frak{b,d}}(n)\le \mu_{\beta,\frak{d}} n^{\log_\beta (\mu_{\beta,\frak{d}})},
\end{equation}
and for any $x>0$,
\begin{equation}\label{eqdas1s}
0\le S_{\frak{b,d}}(x)-x^{\log_\beta|\frak{d}|}\Psi_{\beta,\frak{d}}(\log_\beta x)\le x^{\log_\beta (\mu_{\beta, \frak{d}})} \frac{\mu_{\beta,\frak{d}}}{|\frak{d}|-\mu_{\beta,\frak{d}}} \sum_{\delta\in \frak{d}}\left(1+\left\lfloor{\delta}/{\beta}\right\rfloor\right).
\end{equation}
\end{theorem}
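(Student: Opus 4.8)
The plan is to reduce everything to a clean functional relation for $S_{\beta,\frak{d}}$ coming from splitting off the lowest digit. Observe that any admissible tuple $(\delta_0,\delta_1,\ldots)$ with $\sum_k\delta_k\beta^k\le x$ is obtained by choosing $\delta_0\in\frak{d}$ and then an admissible tuple for $(x-\delta_0)/\beta$ with base $\frak{b}$ again (since $\frak{b}=(\beta^k)_k$ is self-similar under division by $\beta$). This gives the exact identity
\begin{equation*}
S_{\frak{b,d}}(x)=\sum_{\delta\in\frak{d}}S_{\frak{b,d}}\!\left(\tfrac{x-\delta}{\beta}\right).
\end{equation*}
Setting $x=\beta^y$ and $G(y):=|\frak{d}|^{-y}S_{\frak{b,d}}(\beta^y)$, the main term $x^{\log_\beta|\frak{d}|}\Psi_{\beta,\frak{d}}(\log_\beta x)=|\frak{d}|^y\Psi_{\beta,\frak{d}}(y)$ should be exactly the part of $G$ that is $1$-periodic; indeed formula \eqref{eqpsi} is visibly built so that $\Psi_{\beta,\frak{d}}$ is the periodized, telescoped version of $G$. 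So the first step is to verify algebraically that the error $E(x):=S_{\frak{b,d}}(x)-x^{\log_\beta|\frak{d}|}\Psi_{\beta,\frak{d}}(\log_\beta x)$ satisfies the homogeneous relation $E(\beta x)=\sum_{\delta\in\frak{d}}E(x-\delta/\beta)$ up to controllable terms; equivalently, that $|\frak{d}|^{-y}E(\beta^y)$ has mean zero in $y$ and inherits a contraction.

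Next I would establish the lower bound $E(x)\ge 0$ and the upper bound simultaneously by a direct combinatorial/counting argument on digit strings rather than by iterating the functional equation blindly. Group the tuples by their residue pattern: writing each $\delta\equiv a\pmod\beta$, the integers representable with a fixed residue sequence form an arithmetic-progression-like structure, and the number of $\delta\in\frak{d}$ in any fixed residue class is at most $\mu_{\beta,\frak{d}}$. Iterating the splitting identity $k$ times expresses $S_{\frak{b,d}}(\beta^k t)$ as a sum over length-$k$ digit words of $S_{\frak{b,d}}$ evaluated at shifted arguments; the number of words producing a given target grows like $\mu_{\beta,\frak{d}}^k$ rather than $|\frak{d}|^k$ because of the mod-$\beta$ collisions, and this is exactly what yields the exponent $\log_\beta\mu_{\beta,\frak{d}}$ in \eqref{eqrcs}. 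For $S_{\frak{b,d}}$, summing the pointwise bound $r_{\frak{b,d}}(n)\le\mu_{\beta,\frak{d}}n^{\log_\beta\mu_{\beta,\frak{d}}}$ over $0\le n\le x$ and comparing against the periodic main term via the mean-zero property gives the stated two-sided estimate, with the constant $\mu_{\beta,\frak{d}}/(|\frak{d}|-\mu_{\beta,\frak{d}})$ appearing as the geometric series $\sum_{h\ge0}(\mu_{\beta,\frak{d}}/|\frak{d}|)^h$ that controls the telescoped tail in \eqref{eqpsi}, and the factor $\sum_{\delta\in\frak{d}}(1+\lfloor\delta/\beta\rfloor)$ absorbing the boundary/rounding losses in each application of the splitting identity.

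For \eqref{eqrcs} itself the cleanest route is induction on $n$: the splitting identity at the level of $r$ reads $r_{\frak{b,d}}(n)=\sum_{\delta\in\frak{d},\ \delta\equiv n(\beta)}r_{\frak{b,d}}((n-\delta)/\beta)$, a sum of at most $\mu_{\beta,\frak{d}}$ terms, each with argument $\le n/\beta$ (using $\delta\ge0$). Then $r_{\frak{b,d}}(n)\le\mu_{\beta,\frak{d}}\cdot\mu_{\beta,\frak{d}}(n/\beta)^{\log_\beta\mu_{\beta,\frak{d}}}=\mu_{\beta,\frak{d}}n^{\log_\beta\mu_{\beta,\frak{d}}}$, since $\mu_{\beta,\frak{d}}\cdot\beta^{-\log_\beta\mu_{\beta,\frak{d}}}=1$; one only needs to check the finitely many base cases where the argument drops below $1$, and $\gcd(\frak{d})=1$ is what guarantees $r_{\frak{b,d}}$ is finite and the recursion terminates. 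I expect the main obstacle to be the bookkeeping in the upper bound of \eqref{eqdas1s}: one must show that when the pointwise bound is summed and the exact periodic main term $|\frak{d}|^y\Psi_{\beta,\frak{d}}(y)$ is subtracted, the remaining discrepancy is genuinely $O(x^{\log_\beta\mu_{\beta,\frak{d}}})$ with precisely the claimed constant — this requires matching the telescoping structure of \eqref{eqpsi} term-by-term against the iterated splitting identity and carefully tracking the floor functions $\lfloor\delta/\beta\rfloor$ that record how many integer points in $[\beta^{h+x}-\beta^{-1}\delta,\beta^{h+x}]$ are miscounted, so that nothing is lost or double-counted across the infinitely many scales $h\ge0$.
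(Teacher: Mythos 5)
Your induction for \eqref{eqrcs} is a correct, slightly different packaging of the paper's argument: the paper fixes the $\beta$-adic expansion $n=\sum d_k\beta^k$ and observes directly that for any representation $(\delta_0,\ldots,\delta_m)$, once $\delta_0,\ldots,\delta_{j-1}$ are chosen, $\delta_j$ is forced mod $\beta$, so each of the $\lfloor\log_\beta n\rfloor+1$ positions admits at most $\mu_{\beta,\frak{d}}$ digits; you instead strip the lowest digit once and induct, using $r_{\frak{b,d}}(n)=\sum_{\delta\in\frak{d},\,\delta\equiv n\,(\beta),\,\delta\le n}r_{\frak{b,d}}((n-\delta)/\beta)$ and the exact cancellation $\mu_{\beta,\frak{d}}\beta^{-\log_\beta\mu_{\beta,\frak{d}}}=1$. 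Both rest on the same mod-$\beta$ constraint; either works (your base cases $(n-\delta)/\beta\in\{0\}$ are trivial since $r_{\frak{b,d}}(0)=1\le\mu_{\beta,\frak{d}}$).

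For \eqref{eqdas1s}, however, what you have is an outline of the paper's proof, not a proof, and the crucial step is missing. The paper's engine is the \emph{exact} identity obtained by iterating the splitting relation $n$ times (its Lemma~\ref{lemm21}, identity~\eqref{eqsrc}): $|\frak{d}|^{-n-x}S_{\beta,\frak{d}}(\beta^{n+x})=|\frak{d}|^{-x}S_{\beta,\frak{d}}(\beta^{x})-|\frak{d}|^{-1}\sum_{0\le h<n}|\frak{d}|^{-h-x}\sum_{\delta\in\frak{d}}\bigl(S_{\beta,\frak{d}}(\beta^{h+x})-S_{\beta,\frak{d}}(\beta^{h+x}-\beta^{-1}\delta)\bigr)$. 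Comparing with the defining series~\eqref{eqpsi} for $\Psi_{\beta,\frak{d}}$, the error $S_{\beta,\frak{d}}(\beta^{n+x})-|\frak{d}|^{n+x}\Psi_{\beta,\frak{d}}(x)$ is \emph{exactly} the tail $h\ge n$ of that series, each term of which is nonnegative because $S_{\beta,\frak{d}}$ is nondecreasing; that single observation gives the lower bound $\ge 0$ that you gesture at but never pin down (your appeal to a ``mean-zero property'' is not the right mechanism here). The upper bound then comes from applying \eqref{eqrcs} to the integers $\ell$ in each window $(\beta^{h+x}-\beta^{-1}\delta,\beta^{h+x}]$: the number of such $\ell$ is $\lfloor\beta^{h+x}\rfloor-\lceil\beta^{h+x}-\beta^{-1}\delta\rceil+1\le 1+\lfloor\delta/\beta\rfloor$, each $r_{\frak{b,d}}(\ell)\le\mu_{\beta,\frak{d}}^{1+h+x}$, and summing the geometric series $\sum_{h\ge 0}(\mu_{\beta,\frak{d}}/|\frak{d}|)^h$ produces precisely $\mu_{\beta,\frak{d}}/(|\frak{d}|-\mu_{\beta,\frak{d}})$ after the remaining prefactors are collected. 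You correctly identify all of these ingredients (geometric series, $\lfloor\delta/\beta\rfloor$ as a lattice-point count, telescoping), but you defer the ``bookkeeping'' which is, in fact, the entire content of the second estimate; without deriving~\eqref{eqsrc} and identifying the error with the nonnegative tail of~\eqref{eqpsi}, neither inequality in~\eqref{eqdas1s} is established.
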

The above is a very special case where the base $\frak{b}=(\beta^k)_{k\ge 0}$ with $\beta\in\bn$, which is the same as the classical $\beta$-adic numeration system, while the set of digits $\frak{d}\subset \bn_0$ is more general than just $[0, \beta)\cap \bz$. For the more general base $\frak{b}$ and the set of digits $\frak{d}$ considered in this paper, we prove the following theorem.
\begin{theorem}\label{th1}For any $\Delta>0$ and any $\varepsilon>0$, we have
\begin{align}\label{eqds1}
S_{\frak{b, d}}(x)-S_{\frak{b, d}}(x-\Delta)\le (1+\Delta)x^{(1-\kappa_{\beta,\frak{d}})\log_\beta|\frak{d}|+\varepsilon},
\end{align}
for all sufficiently large $x$. For any $x\in\br$ we have
\begin{align}\label{eqdas1}
\lim_{n\to+\infty} |\frak{d}|^{-n-x}S_{\frak{b, d}}(b_n\beta^x)=\Psi_{\beta,\frak{d}}(x).
\end{align}
Moreover, for the base when $b_k=\beta^{k}+O(\beta^{(1-\gamma) k})$ with any fixed $\gamma\in (0, 1]$, as $x\rrw+\infty$
\begin{align}\label{prosssc}
x^{-\log_\beta |\frak{d}|}S_{\frak{b, d}}(x)=\Psi_{\beta,\frak{d}}(\log_\beta x)+O\left(x^{-\gamma\kappa_{\beta,\frak{d}} \log_{\beta}|\frak{d}|+\varepsilon}\right).
\end{align}
\end{theorem}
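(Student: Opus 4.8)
The backbone is the identity $S_{\frak b,\frak d}(y)=\sum_{\delta\in\frak d}S_{\frak b^{(1)},\frak d}(y-\delta b_0)$ — and more generally its $n$-fold iterate over $\frak d^{n}$, $S_{\frak b,\frak d}(y)=\sum_{(\delta_0,\dots,\delta_{n-1})\in\frak d^{n}}S_{\frak b^{(n)},\frak d}\bigl(y-\sum_{k<n}\delta_k b_k\bigr)$, where $\frak b^{(j)}:=(b_{j+k})_{k\ge 0}$ — together with the scaling $S_{c\frak b,\frak d}(cy)=S_{\frak b,\frak d}(y)$ and the hypothesis $b_{k+1}/b_k\to\beta$. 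I would first record the functional equation $S_{\beta,\frak d}(\beta^{x+1})=|\frak d|\,S_{\beta,\frak d}(\beta^{x})-\sum_{\delta\in\frak d}\bigl(S_{\beta,\frak d}(\beta^{x})-S_{\beta,\frak d}(\beta^{x}-\delta/\beta)\bigr)$: dividing by $|\frak d|^{x+1}$ and writing $g(x):=|\frak d|^{-x}S_{\beta,\frak d}(\beta^{x})$, the $h$-th summand of \eqref{eqpsi} becomes $g(x+h)-g(x+h+1)$, so (using that \eqref{eqpsi} converges by Proposition \ref{lemm20}) the series telescopes and $\Psi_{\beta,\frak d}(x)=\lim_{H\to\infty}g(x+H)$.

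The inequality \eqref{eqds1} is the crux of the theorem and the main obstacle. Splitting $(x-\Delta,x]$ into $\lceil\Delta\rceil$ unit windows (this is the source of the factor $1+\Delta$), it reduces to bounding the number of $(\delta_k)\in\frak d^{\bn_0}$ with $\sum_k\delta_k b_k$ in a unit interval near $x$ by $x^{(1-\kappa_{\beta,\frak d})\log_\beta|\frak d|+\varepsilon}$. The plan is a counting argument that makes the rôle of $\kappa_{\beta,\frak d}$ explicit: among the positions $k\le N:=\log_\beta x+O(1)$, single out a Beatty-type progression of asymptotic density $\kappa_{\beta,\frak d}$, chosen so that the ``sub-system'' obtained by restricting to it has base growth exceeding $1+\max_{\delta\in\frak d}\delta$ — this is exactly the content of the inequality defining $\kappa_{\beta,\frak d}$ — and hence admits essentially unique representations, so that the digits at those ``rigid'' positions are determined by the value up to $O(1)$ ambiguity once the unit window is fixed; the digits at the remaining $(1-\kappa_{\beta,\frak d})$-proportion of positions are free, contributing at most $|\frak d|^{(1-\kappa_{\beta,\frak d})N}=x^{(1-\kappa_{\beta,\frak d})\log_\beta|\frak d|}$ choices. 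Passing from $(\beta^{k})_k$ to a general $\frak b$ replaces $\beta^{k}$ by $\beta^{k+o(k)}$, absorbed into $x^{\varepsilon}$. The delicate points are the exact design of the rigid progression so that its density is $\kappa_{\beta,\frak d}$ rather than the cruder $1/\lceil\log_\beta(1+\max\delta)\rceil$, and verifying the rigidity in the presence of the digits carried by the other positions.

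For \eqref{eqdas1} I would not compare $S_{\frak b,\frak d}$ with $S_{\beta,\frak d}$ directly (their difference need not be small), but iterate the shift recursion. Put $T^{\frak a}_{n}(x):=|\frak d|^{-n-x}S_{\frak a,\frak d}(a_n\beta^{x})$ for an admissible basis $\frak a$; peeling off the lowest digit and using $b_{n+1}=b^{(1)}_n$ gives $T^{\frak b}_{n+1}(x)=|\frak d|^{-1}\sum_{\delta\in\frak d}|\frak d|^{\xi_n(\delta)-x}T^{\frak b^{(1)}}_n(\xi_n(\delta))$ with $\xi_n(\delta)=x+\log_\beta\bigl(1-\delta b_0\beta^{-x}/b_{n+1}\bigr)=x+O(\beta^{-n})$. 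Since the resulting argument shift $b^{(1)}_n\bigl(\beta^{\xi_n(\delta)}-\beta^{x}\bigr)$ is bounded, the gap estimate \eqref{eqds1} applied to a window of bounded length — where it yields the exponential saving $O\bigl((b^{(1)}_n)^{(1-\kappa_{\beta,\frak d})\log_\beta|\frak d|+\varepsilon}\bigr)$ — gives $\bigl|T^{\frak b}_{n+1}(x)-T^{\frak b^{(1)}}_{n}(x)\bigr|=O(\rho^{n})$ with $\rho:=\max\bigl(\beta^{-1},|\frak d|^{-\kappa_{\beta,\frak d}}\beta^{\varepsilon}\bigr)<1$ (for $\varepsilon<\kappa_{\beta,\frak d}\log_\beta|\frak d|$), uniformly in $x$ on bounded sets and in the shifted bases $\frak b^{(j)}$. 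Hence $\bigl|T^{\frak b}_{N}(x)-T^{\frak b^{(j)}}_{N-j}(x)\bigr|=O(\rho^{N-j})$. On the other hand, for fixed $m$ the quantity $T^{\frak b^{(j)}}_{m}(x)=|\frak d|^{-m-x}S_{\widetilde{\frak b^{(j)}},\frak d}\bigl((b_{j+m}/b_j)\beta^{x}\bigr)$, with $\widetilde{\frak b^{(j)}}_k=b_{j+k}/b_j\to\beta^{k}$, converges as $j\to\infty$ to $g(m+x)$, because $S$ depends on the basis only through finitely many entries for bounded arguments and is continuous in them away from the nodes of $\mathcal N_{\beta,\frak d}$, the boundary discrepancy being $O(\rho^{m})$ by \eqref{eqds1}. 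Therefore $\limsup_{N\to\infty}\bigl|T^{\frak b}_{N}(x)-g(m+x)\bigr|=O(\rho^{m})$ for each fixed $m$ (take $j=N-m$), and letting $m\to\infty$ with $g(m+x)\to\Psi_{\beta,\frak d}(x)$ proves \eqref{eqdas1}.

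Finally, \eqref{prosssc} is the quantitative version of the last step under $b_k=\beta^{k}+O(\beta^{(1-\gamma)k})$. Given $x$, choose $n$ with $b_n\le x<b_{n+1}$, so $x=b_n\beta^{x'}$ with $x'\in[0,1+o(1))$, $x\asymp\beta^{n}$ and $x^{-\log_\beta|\frak d|}S_{\frak b,\frak d}(x)=(\beta^{n}/b_n)^{\log_\beta|\frak d|}T^{\frak b}_n(x')$ exactly; moreover $\Psi_{\beta,\frak d}(\log_\beta x)=\Psi_{\beta,\frak d}(x')+O(\beta^{-\gamma\eta n})$ for every $\eta<\kappa_{\beta,\frak d}\log_\beta|\frak d|$, because $\log_\beta x-x'\equiv\log_\beta(b_n/\beta^{n})\pmod 1$, $b_n/\beta^{n}=1+O(\beta^{-\gamma n})$, and $\Psi_{\beta,\frak d}\in{\rm Lip}_{\eta}(\bt)$ by Proposition \ref{lemm20}. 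It then remains to estimate $x^{-\log_\beta|\frak d|}S_{\frak b,\frak d}(x)-\Psi_{\beta,\frak d}(x')$; one reruns the estimates of the previous paragraph keeping track of the decay rate, using that $b_{j+k}/b_j=\beta^{k}+O(\beta^{k-\gamma j})$ makes the relevant $S$-windows have width $O(\beta^{n-\gamma j})$ (rather than $O(1)$), and that the perturbation of $\frak b$ moves the effective phase by $O(\beta^{-\gamma n})$, which the Lipschitz exponent $\kappa_{\beta,\frak d}\log_\beta|\frak d|$ of $\Psi_{\beta,\frak d}$ then converts into the error $O(\beta^{-\gamma\kappa_{\beta,\frak d}\log_\beta|\frak d|\,n+\varepsilon n})=O(x^{-\gamma\kappa_{\beta,\frak d}\log_\beta|\frak d|+\varepsilon})$. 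The delicate bookkeeping is to choose the depth at which the recursion is truncated so that no more than the factor $\gamma$ is lost in the exponent.
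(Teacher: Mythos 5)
Your reduction of \eqref{eqpsi} to a telescoping series for $g(x):=|\frak{d}|^{-x}S_{\beta,\frak{d}}(\beta^x)$ is exactly the content of the paper's Lemma~\ref{lemm21}, and your sketch of \eqref{eqds1} is the paper's plan in the same spirit: the paper instantiates your ``rigid Beatty-type progression'' as the subsequence $\frak{c}_u=(b_{\lfloor k/u\rfloor})_{k\ge 0}$ with $u<\kappa_{\beta,\frak{d}}$, proves in Lemma~\ref{lem21} that $S_{\frak{c}_u,\frak{d}}$ has $O_u(1)$ increments on unit windows (by showing two representations within distance $2$ must agree on all digits past a fixed threshold, which is where the sum condition $(\max\delta)\sum_{k\ge 1}\beta^{-\lfloor k/u\rfloor}\le 1$ is used, not merely a ratio condition), and in Lemma~\ref{lemm22} convolves with the complementary ``free'' subsequence $\frak{a}_u$ whose length up to $\lambda$ is $(1-u+o(1))\log_\beta\lambda$. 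So for \eqref{eqds1} you have the right idea, but note the rigidity mechanism is more precisely ``high digits determined once the value is pinned to a bounded window'' rather than ``essentially unique representations.''

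For \eqref{eqdas1} and \eqref{prosssc} you take a genuinely different route. The paper does not iterate a one-step shift recursion. Instead it directly compares $S_{\frak{b,d}}(\beta^x b_n)$ to $S_{\beta,\frak{d}}((1\pm\varepsilon)\beta^{n+x})$ by replacing every ratio $b_{n-k}/b_n$ by $\beta^{-k}$ in one shot, defining $\varepsilon_{\beta,\frak{d}}(n,x):=(\max\delta)\beta^{-x}\sum_k|b_{n-k}/b_n-\beta^{-k}|$, proving $\varepsilon\to 0$ by reverse Fatou, and then feeding the result into Lemma~\ref{lemm21} together with the Lipschitz bound on $\Psi_{\beta,\frak{d}}$; for \eqref{prosssc} it does the same with $\varepsilon_{\beta,\frak{d}}(x)\ll x^{-\gamma}\log x$. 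Your recursion $T^{\frak{b}}_{n+1}(x)=|\frak{d}|^{-1}\sum_\delta|\frak{d}|^{\xi_n(\delta)-x}T^{\frak{b}^{(1)}}_n(\xi_n(\delta))$ is correct (the argument perturbation is exactly $\delta b_0$, giving a bounded window so that \eqref{eqds1} yields the geometric increment $O(\rho^n)$ with $\rho=\max(\beta^{-1},|\frak{d}|^{-\kappa_{\beta,\frak{d}}}\beta^\varepsilon)$), and the $\limsup$ argument with $j=N-m$, $m\to\infty$ closes the proof. The trade-off: the paper's sandwich is shorter and avoids telescoping over shifted bases; your telescoping makes the geometric rate of convergence manifest and yields \eqref{prosssc} as a corollary rather than redoing the sandwich.

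Two points you should not leave implicit. First, your telescoping applies \eqref{eqds1} to every shifted base $\frak{b}^{(i)}$, $0\le i<j=N-m$; the ``for all sufficiently large $x$'' threshold in \eqref{eqds1} must be uniform in $i$. This holds because the threshold depends only on the rate of $b_{k+1}/b_k\to\beta$, which can only improve under shifts, but it needs a sentence. Second, the claim that $T^{\frak{b}^{(j)}}_m(x)\to g(m+x)$ with discrepancy $O(\rho^m)$ conflates two things: for $\beta^{m+x}\notin\mathcal{N}_{\beta,\frak{d}}$ the convergence is exact (finitely many basis entries, eventually constant step function), while for $\beta^{m+x}\in\mathcal{N}_{\beta,\frak{d}}$ the limit could be either side of the jump, whose height is $r_{\beta,\frak{d}}(\beta^{m+x})=O(\beta^{m((1-\kappa_{\beta,\frak{d}})\log_\beta|\frak{d}|+\varepsilon)})$; after dividing by $|\frak{d}|^m$ this is indeed $O(\rho^m)$, so the final conclusion stands, but you should say this rather than invoke ``continuity away from nodes.''
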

\begin{remark}We present numerical data for \eqref{eqdas1}, as shown in Figure \ref{fig1}. We remark that:
If $\frak{b}=(\beta^k)_{k\ge 0}$ with $\beta\in\bn$, $\frak{d}\subset\bn_0$ has the greatest common divisor $\gcd(\frak{d})=1$ with $|\frak{d}|=\beta$ and $\mu_{\beta,\frak{d}}=1$, then, by Theorem \ref{th2}, we have $r_{\frak{b, d}}(n)\in{0,1}$ for all $n\in\bn_0$. Furthermore, the natural density of $d(\mathcal{N}_{\frak{b,d}})$ does not exist if
$$\liminf_{x\rrw +\infty}\frac{S_{\frak{b,d}}(x)}{x}= \liminf_{x\rrw +\infty}\Psi_{\beta, \frak{d}}(\log_\beta x)\neq \limsup_{x\rrw +\infty}\Psi_{\beta, \frak{d}}(\log_\beta x)=  \limsup_{x\rrw +\infty}\frac{S_{\frak{b,d}}(x)}{x}.$$
In general (see Proposition \ref{pro36}), we have that $\Psi_{\beta, \frak{d}}(\cdot)$ is not constant function. Therefore, we shall call $\Psi_{\beta, \frak{d}}(\cdot)$ a \emph{natural density function}. For the more general case presented in Theorem \ref{th1}, we see that similar limit properties hold for the counting function $S_{\frak{b,d}}(x)$. In such cases, $\log_\beta |\frak{d}|$ may not equal $1$, and the set $\mathcal{N}_{\frak{b,d}}$ may not lie in $\bn_0$. Therefore, the natural density function may not be well-defined. Hence we should refer to $\Psi_{\beta,\frak{d}}(x)$ as the \emph{relative density function} of the $(\frak{b, d})$ numeration system.

\begin{figure}
\begin{subfigure}{.49\columnwidth}
\centering
\includegraphics[width=1\columnwidth]{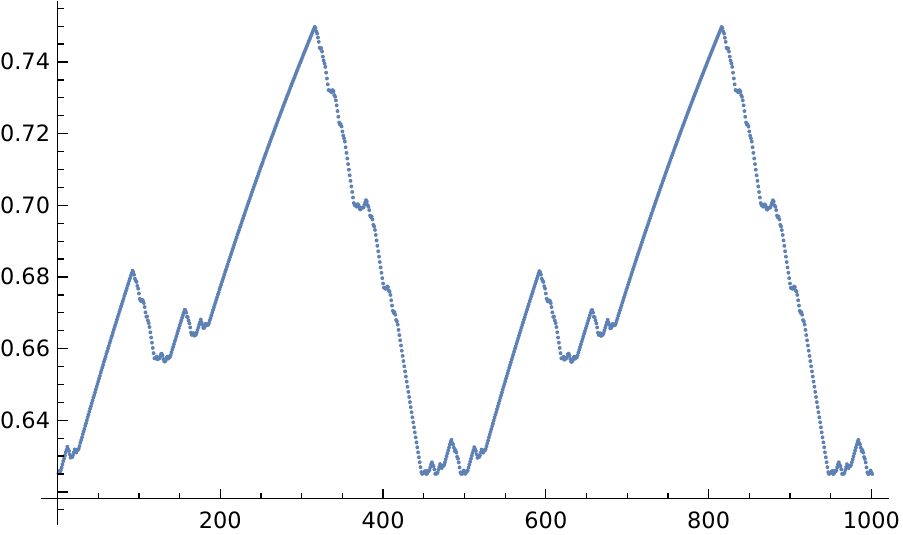}
\caption{Values of $3^{-x}S_{3,\frak{d}}(3^x)$}
\label{fig:sub1}
\end{subfigure}
\begin{subfigure}{.49\columnwidth}
\centering
\includegraphics[width=1\columnwidth]{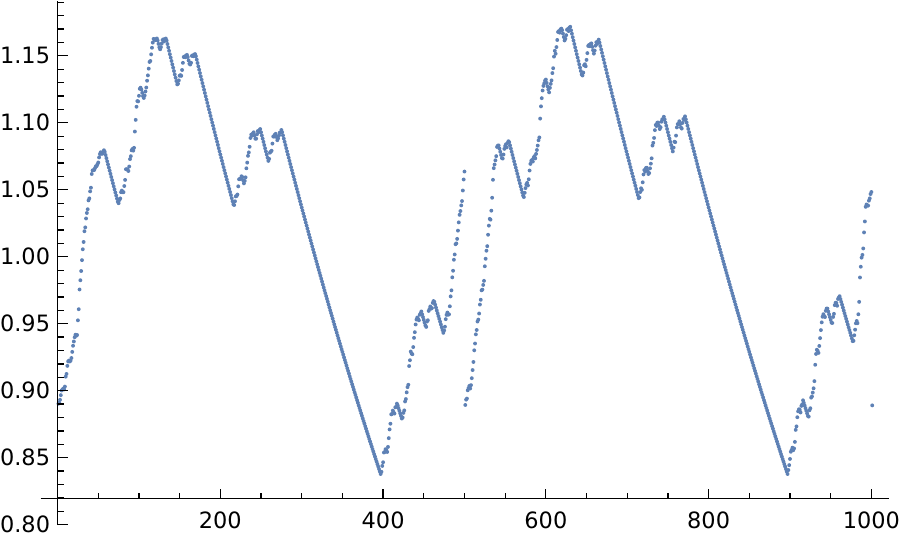}
\caption{Values of $3^{-x}S_{\frak{b},\frak{d}}(3^{x-\lfloor x\rfloor}b_{\lfloor x\rfloor})$}
\label{fig:sub2}
\end{subfigure}%
\caption{Data for $|\frak{d}|^{-x}S_{\frak{b},\frak{d}}(\beta^{x-\lfloor x\rfloor}b_{\lfloor x\rfloor})$}
\label{fig1}
\medskip
In Figures \eqref{fig:sub1}, $(\frak{b,d})=\left((3^k)_{k\ge 0},\{0,1,5\}\right)$, where the $x$-axis is $8+k/500$.
In Figures \eqref{fig:sub2}, $(\frak{b,d})=\left((\binom{2k}{k})_{k\ge 0},\{0,1,3\}\right)$, where the $x$-axis is $6+k/500$.  We see that the numerical data supports Theorem \ref{th1}.
\end{figure}

\end{remark}

As a corollary of the scaling limit \eqref{eqdas1} of Theorem \ref{th1}, we prove the asymptotics for the arbitrary moments of $r_{\frak{b,d}}(\lambda)$ as follows.
\begin{corollary}\label{cor1}
Let $x\in\br$ be a fixed. For each $k>0$, as $n\rrw+\infty$  we have
\begin{equation}\label{eqrekm}
\frac{|\frak{d}|^{\log_\beta b_n-n}}{(\beta^{x}b_n)^k}\sum_{0< \lambda\le \beta^{x}b_n}\frac{r_{\frak{b,d}}(\lambda)}{\lambda^{\log_\beta |\frak{d}|-k}}=\Psi_{\beta, \frak{d}}(x)-\frac{\log(\beta^k/ |\frak{d}|)}{\beta^{k}-1}\int_{0}^{1}\Psi_{\beta,\frak{d}}(x+w)\beta^{kw}\rd w+o(1);
\end{equation}
and for each $k\le 0$, as $n\rrw+\infty$ we have
\begin{equation}\label{eqrelog}
\frac{1}{\log|\frak{d}|}\sum_{0< \lambda\le \beta^{x}b_n}\frac{r_{\frak{b,d}}(\lambda)}{\lambda^{\log_\beta |\frak{d}|-k}}=(1+o(1))\sum_{0\le h< n}|\frak{d}|^{h-\log_\beta b_h}\int_{0}^1\Psi_{\beta,\frak{d}}(u)\rd u+O(1).
\end{equation}
\end{corollary}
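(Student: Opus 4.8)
The plan is to derive both asymptotic formulas in Corollary \ref{cor1} from the scaling limit \eqref{eqdas1} of Theorem \ref{th1} by Abel summation (partial summation), converting the weighted sum $\sum_{0<\lambda\le \beta^x b_n} r_{\frak{b,d}}(\lambda)\lambda^{k-\log_\beta|\frak{d}|}$ into an integral against $dS_{\frak{b,d}}(t)$ and then substituting the known asymptotics for $S_{\frak{b,d}}$. Concretely, writing $g(t)=t^{k-\log_\beta|\frak{d}|}$, one has
\[
\sum_{0<\lambda\le \beta^x b_n}r_{\frak{b,d}}(\lambda)g(\lambda)
=\int_{0^+}^{\beta^x b_n} g(t)\,\rd S_{\frak{b,d}}(t)
= g(\beta^x b_n)S_{\frak{b,d}}(\beta^x b_n)-\int_{0^+}^{\beta^x b_n} S_{\frak{b,d}}(t)\,g'(t)\,\rd t,
\]
valid as a Riemann--Stieltjes integral since $S_{\frak{b,d}}$ is a nondecreasing step function. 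The strategy is then to split the $t$-integral at a point like $t=1$ (or any fixed constant), bound the contribution of the bounded range trivially (it contributes $O(1)$ after normalization, since $S_{\frak{b,d}}$ is bounded there), and on the main range $t\in[1,\beta^x b_n]$ substitute $S_{\frak{b,d}}(t)=t^{\log_\beta|\frak{d}|}\Psi_{\beta,\frak{d}}(\log_\beta t)+(\text{error})$. For the error term one uses \eqref{eqds1}, which controls the oscillation of $S_{\frak{b,d}}$, together with \eqref{eqdas1} along the specific scale $t=\beta^w b_n$; the point is that after dividing by $(\beta^x b_n)^k|\frak{d}|^{n-\log_\beta b_n}$ the error is $o(1)$ uniformly.

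**Next I would** handle the two cases separately because the integral $\int_1^{T} t^{\log_\beta|\frak{d}|}\Psi_{\beta,\frak{d}}(\log_\beta t)\cdot t^{k-\log_\beta|\frak{d}|-1}\,\rd t=\int_1^T t^{k-1}\Psi_{\beta,\frak{d}}(\log_\beta t)\,\rd t$ behaves differently depending on the sign of $k$. For $k>0$: substitute $t=\beta^u$, so the integral becomes $(\log\beta)\int_0^{\log_\beta T}\beta^{ku}\Psi_{\beta,\frak{d}}(u)\,\rd u$, which is dominated by its top end; writing $\log_\beta T=x+\log_\beta b_n$ and using the $1$-periodicity of $\Psi_{\beta,\frak{d}}$ together with $b_n=|\frak{d}|^{\,\log_\beta b_n}$-type bookkeeping, one extracts the leading term $(\beta^x b_n)^k$ times a constant built from $\int_0^1\Psi_{\beta,\frak{d}}(x+w)\beta^{kw}\,\rd w$. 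Combining the boundary term $g(T)S_{\frak{b,d}}(T)=T^k\Psi_{\beta,\frak{d}}(x)(1+o(1))$ with the integral term, after multiplying through by the factor $|\frak{d}|^{\log_\beta b_n-n}(\beta^x b_n)^{-k}$ and simplifying the constant $(\log\beta)\cdot k/(\beta^k-1)\cdot\ldots$, I expect to land exactly on \eqref{eqrekm}, with the combination $g(T)S_{\frak{b,d}}(T)-\int S\,g'$ producing the $\Psi_{\beta,\frak{d}}(x)$ term minus the $\frac{\log(\beta^k/|\frak{d}|)}{\beta^k-1}\int_0^1\Psi_{\beta,\frak{d}}(x+w)\beta^{kw}\,\rd w$ correction. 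For $k\le 0$: now $g(t)=t^{k-\log_\beta|\frak{d}|}$ with $k-\log_\beta|\frak{d}|<0$ strictly, so $g'(t)<0$ and the integrand $t^{k-1}\Psi_{\beta,\frak{d}}(\log_\beta t)$ is no longer top-heavy; in fact for $k=0$ the integral $\int_1^T t^{-1}\Psi_{\beta,\frak{d}}(\log_\beta t)\,\rd t=(\log\beta)\int_0^{\log_\beta T}\Psi_{\beta,\frak{d}}(u)\,\rd u\sim (\log\beta)(\log_\beta T)\int_0^1\Psi_{\beta,\frak{d}}(u)\,\rd u$ grows and splits naturally into a sum over unit intervals $\sum_{h}\int_0^1\Psi_{\beta,\frak{d}}(u)\,\rd u$; matching the scales $\log_\beta T=\log_\beta b_n+x$ and re-indexing $h<n$ (picking up weights $|\frak{d}|^{h-\log_\beta b_h}$ from the relation between $b_h$-scales and $|\frak{d}|$-scales along the sequence $b_n$, which is exactly what \eqref{eqdas1} encodes) yields \eqref{eqrelog}; the $\frac{1}{\log|\frak{d}|}$ and $\log|\frak{d}|$ prefactors are just the Jacobian $\log\beta$ versus $\log|\frak{d}|$ bookkeeping together with $\log_\beta|\frak{d}|=\log|\frak{d}|/\log\beta$. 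The $k<0$ subcase is similar but now the integral converges at the top as well as the boundary term, and everything is absorbed into the $O(1)$.

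**The main obstacle** I anticipate is the uniformity and bookkeeping in passing from the pointwise limit \eqref{eqdas1} (which is stated along the discrete-cum-continuous scale $\beta^x b_n$ for fixed $x$ and $n\to\infty$) to an estimate valid for $S_{\frak{b,d}}(t)$ at \emph{all} large $t$ inside the Stieltjes integral; this is precisely where \eqref{eqds1} must be invoked to show that the oscillation of $|\frak{d}|^{-n-w}S_{\frak{b,d}}(\beta^w b_n)$ as $w$ ranges over $[0,1]$ is uniformly small and that interpolating between consecutive indices $n$ costs only an $o(1)$ error after the relevant normalization. A secondary nuisance is verifying that the lower endpoint $t\to 0^+$ contributes nothing problematic — for $k>0$ this is automatic since $g(t)\to 0$, while for $k\le 0$, $g(t)=t^{k-\log_\beta|\frak{d}|}$ blows up at $0$, but $S_{\frak{b,d}}(t)=0$ for $t<\min(\frak{d}\setminus\{0\})\cdot b_0>0$ (or more carefully, $S_{\frak{b,d}}$ vanishes near $0$ apart from the atom at $\lambda=0$, which is excluded by the condition $\lambda>0$ in the sum), so the integral starts at a fixed positive point and there is no divergence — this is exactly why the sum in the corollary is restricted to $\lambda>0$. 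Once these uniformity points are pinned down, the rest is the routine substitution $t=\beta^u$, periodicity of $\Psi_{\beta,\frak{d}}$, and simplification of elementary constants.
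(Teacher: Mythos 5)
Your opening step---Abel summation / Riemann--Stieltjes integration by parts turning the sum into $g(T)S_{\frak{b,d}}(T)-\int S_{\frak{b,d}}(t)\,g'(t)\,\rd t$---is exactly how the paper begins. The gap is in the substitution $S_{\frak{b,d}}(t)=t^{\log_\beta|\frak{d}|}\Psi_{\beta,\frak{d}}(\log_\beta t)+\text{error}$ on the main range. That asymptotic form is only available when $b_k=\beta^k+O(\beta^{(1-\gamma)k})$; it is precisely \eqref{prosssc}, which is a \emph{separate, stronger} statement than \eqref{eqdas1}. For the general base of the corollary the only input is the scaling limit \eqref{eqdas1}, which says $S_{\frak{b,d}}(\beta^w b_h)\approx|\frak{d}|^{h+w}\Psi_{\beta,\frak{d}}(w)$ as $h\to\infty$. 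This is genuinely different from $(\beta^w b_h)^{\log_\beta|\frak{d}|}\Psi_{\beta,\frak{d}}(w+\log_\beta b_h)$: the two differ by the multiplicative factor $|\frak{d}|^{h-\log_\beta b_h}$, which need not converge --- for $b_h=\binom{2h}{h}$ (one of the paper's own examples) one has $h-\log_\beta b_h\asymp\log h\to\infty$ --- and the argument of $\Psi$ is shifted by $\log_\beta b_h$, which does not stabilize modulo~$1$. So the ``error'' you propose to bound is not $o(\text{main term})$; it can dominate, and neither \eqref{eqds1} nor the single normalization factor $|\frak{d}|^{n-\log_\beta b_n}$ (which is pinned to the top endpoint $t=\beta^x b_n$ only) can repair the interior of the integral.

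The repair, and what the paper actually does, is to decompose the Stieltjes integral along the $b_h$-scale rather than the $\beta^j$-scale: split $\int_{\beta^x b_0}^{\beta^x b_n}S_{\frak{b,d}}(u)\,u^{k'-1}\,\rd u$ (with $k'=k-\log_\beta|\frak{d}|$) at the points $\beta^x b_h$ for $0\le h\le n$, substitute $u=\beta^{x+\log_\beta v}b_h$ with $v\in[1,b_{h+1}/b_h]$, and apply \eqref{eqdas1} on each piece (together with $b_{h+1}/b_h\to\beta$). This is exactly what makes the weights $|\frak{d}|^{h}b_h^{k'}=|\frak{d}|^{h-\log_\beta b_h}b_h^{k}$ appear, from which the geometric tail $\sum_{j\ge1}(|\frak{d}|\beta^{k'})^{-j}$ for $k>0$ and the (possibly divergent) weight sum $\sum_h|\frak{d}|^{h-\log_\beta b_h}$ for $k=0$ emerge cleanly; Lemma~\ref{lemaa} then absorbs the boundary term in the divergent case. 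Your sketch gestures correctly at the $b_n$-scale and the origin of the weights, but as written the substitution would produce $\Psi_{\beta,\frak{d}}(x+\log_\beta b_n+w)$ in the final integral instead of $\Psi_{\beta,\frak{d}}(x+w)$, and these agree only when $\log_\beta b_n\in\bz$. You need to redo the interval decomposition at the $b_h$-scale from the outset.
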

Note that both Fibonacci numbers $F_n$, Lucas numbers $L_n$, and $\lfloor\tau^n\rfloor$ satisfy $b_n\sim c\beta^n$ as $n\rrw+\infty$ for some constants $\beta>1$ and $c>0$. The special case of Corollary \ref{cor1} where $b_n\sim c\beta^n$ and $\frak{d}=\{0,1\}$ solves a conjecture of Chow-Slattery \cite[Conjecture 1.4]{MR4235264} regarding the logarithmic average of the Fibonacci partition function $R_{F}(n)$, as well as the three problems of Chow-Slattery \cite[p.315, (1),(2),(3)]{MR4235264} mentioned earlier. In particular, we can use \eqref{eqrelog} in Corollary \ref{cor1} to determine that
\begin{equation*}
\frac{1}{\log x}\sum_{0< \lambda\le x}\frac{r_{\frak{b,d}}(\lambda)}{\lambda^{\log_\beta |\frak{d}|}}=\frac{\log_\beta |\frak{d}|}{c^{\log_\beta|\frak{d}|}}\int_{0}^1\Psi_{\beta,\frak{d}}(u)\rd u+o(1),
\end{equation*}
and use \eqref{eqrekm} in Corollary \ref{cor1} to determine for each $k>0$ that,
\begin{equation*}
\frac{1}{(cx)^k}\sum_{0< \lambda\le cx}\frac{r_{\frak{b,d}}(\lambda)}{\lambda^{\log_\beta |\frak{d}|-k}}=\Psi_{\beta, \frak{d}}(\log_\beta x)-\frac{\log(\beta^k/|\frak{d}|)}{\beta^{k}-1}\int_{0}^{1}\Psi_{\beta,\frak{d}}(w+\log_\beta x)\beta^{kw}\rd w+o(1),
\end{equation*}
as $x\rrw+\infty$. Here we used that $S_{c\frak{b}, \frak{d}}(cx)=S_{\frak{b, d}}(x)$. Notice that the above asymptotics correspond to the logarithmic average and $(k-\log_\beta |\frak{d}|)$-th moments of $r_{\frak{b,d}}(\lambda)$.

\medskip

We shall define the zeta function associated with the $(\frak{b, d})$ numeration system as
\begin{equation}\label{eqzeta}
\zeta_{\frak{b, d}}(s)=\sum_{\lambda>0}\frac{r_{\frak{b, d}}(\lambda)}{\lambda^{s}}.
\end{equation}
As a direct consequence of Corollary \ref{cor1}, we have the following proposition for the abscissa of convergence of $\zeta_{\frak{b,d}}(s)$.
\begin{proposition}
The abscissa of convergence of $\zeta_{\frak{b,d}}(s)$
is equal to $\sigma_c = \log_\beta |\frak{d}|$. Moreover, $\zeta_{\frak{b,d}}(\log_\beta |\frak{d}|)$ converges
if and only if $\sum_{h\geq 0}|\frak{d}|^{h-\log_\beta b_h}$ converges.
\end{proposition}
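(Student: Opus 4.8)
The plan is to deduce both statements from Corollary \ref{cor1}, which already controls the partial sums of the Dirichlet series along the subsequence $x = \beta^x b_n$. First I would recall the general fact about Dirichlet series with non-negative coefficients: the abscissa of convergence $\sigma_c$ equals the abscissa of absolute convergence, and for a series $\sum_\lambda a_\lambda \lambda^{-s}$ with $a_\lambda\ge 0$ it is characterized by $\sigma_c=\limsup_{X\to\infty}\frac{\log A(X)}{\log X}$ where $A(X)=\sum_{0<\lambda\le X}a_\lambda$ (together with the boundary behaviour being governed by convergence or divergence of $A(X)X^{-\sigma_c}$ in an integrated sense). Here $a_\lambda = r_{\frak{b,d}}(\lambda)$ and $A(X)=S_{\frak{b,d}}(X)$.

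For the value $\sigma_c=\log_\beta|\frak{d}|$: by the scaling limit \eqref{eqdas1} of Theorem \ref{th1} we have $|\frak{d}|^{-n-x}S_{\frak{b,d}}(b_n\beta^x)\to \Psi_{\beta,\frak{d}}(x)$, and since $b_n = \beta^{n+o(n)}$ (because $\log_\beta(b_{n}/\beta^{n}) = \sum_{k<n}\log_\beta(b_{k+1}/b_k) - n\log_\beta\beta$... more directly, $b_{k+1}/b_k\to\beta$ forces $b_n^{1/n}\to\beta$), we get $\log S_{\frak{b,d}}(b_n\beta^x) = (n+x)\log|\frak{d}| + \log\Psi_{\beta,\frak{d}}(x)+o(1)$ while $\log(b_n\beta^x) = (n+x)\log\beta + o(n)$. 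Dividing, the ratio tends to $\log|\frak{d}|/\log\beta = \log_\beta|\frak{d}|$ along this subsequence; since $\Psi_{\beta,\frak{d}}$ is bounded above and below by positive constants (Proposition \ref{lemm20}), and $S_{\frak{b,d}}$ is monotone so interpolates between consecutive values $b_n$, $b_{n+1}$ which are comparable up to a bounded factor, the full $\limsup$ (and $\liminf$) of $\frac{\log S_{\frak{b,d}}(X)}{\log X}$ equals $\log_\beta|\frak{d}|$. Hence $\sigma_c=\log_\beta|\frak{d}|$.

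For the boundary statement at $s=\log_\beta|\frak{d}|$: I would apply \eqref{eqrelog} of Corollary \ref{cor1} with $k=0$, which reads (up to the normalizing constant $1/\log|\frak{d}|$ and $O(1)$ error)
\[
\sum_{0<\lambda\le \beta^x b_n}\frac{r_{\frak{b,d}}(\lambda)}{\lambda^{\log_\beta|\frak{d}|}} = (1+o(1))\,\log|\frak{d}|\sum_{0\le h<n}|\frak{d}|^{h-\log_\beta b_h}\int_0^1\Psi_{\beta,\frak{d}}(u)\,\rd u + O(1).
\]
Since $\int_0^1\Psi_{\beta,\frak{d}}(u)\,\rd u>0$ by the strict positivity of $\Psi_{\beta,\frak{d}}$, the right-hand side converges as $n\to\infty$ if and only if $\sum_{h\ge 0}|\frak{d}|^{h-\log_\beta b_h}$ converges. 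As the partial sums $\sum_{0<\lambda\le X}r_{\frak{b,d}}(\lambda)\lambda^{-\log_\beta|\frak{d}|}$ are non-decreasing in $X$, convergence along the subsequence $X=\beta^x b_n$ is equivalent to convergence of the whole (non-negative) series $\zeta_{\frak{b,d}}(\log_\beta|\frak{d}|)$; and if the sum over $h$ diverges, these partial sums are unbounded, so the series diverges. This gives the stated equivalence.

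The main obstacle, I expect, is the careful bookkeeping of the subsequence-to-full-sequence passage: one must check that the monotonicity of $S_{\frak{b,d}}$ together with $b_{n+1}/b_n\to\beta$ genuinely pins down the $\limsup$ over all $X$ (not merely along $X=b_n\beta^x$), and — for the boundary claim — that replacing "convergence along $X=\beta^x b_n$" by "convergence of $\zeta_{\frak{b,d}}(\log_\beta|\frak{d}|)$" is legitimate, which it is precisely because all coefficients are non-negative so the partial-sum function is monotone and any divergence is to $+\infty$. Everything else is a direct quotation of Theorem \ref{th1} and Corollary \ref{cor1} combined with the elementary theory of Dirichlet series with non-negative coefficients.
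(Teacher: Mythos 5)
The paper states this proposition without giving an explicit proof, remarking only that it is a ``direct consequence of Corollary~\ref{cor1}''; your argument supplies precisely the missing details, and it is correct. For the value of $\sigma_c$ you invoke the standard characterization $\sigma_c=\limsup_{X\to\infty}\log S_{\frak{b,d}}(X)/\log X$ for Dirichlet series with non-negative coefficients whose summatory function is unbounded, together with the scaling limit \eqref{eqdas1} of Theorem~\ref{th1} and the elementary fact $b_n^{1/n}\to\beta$; the passage from the subsequence $X=b_n\beta^x$ to all $X$ via monotonicity of $S_{\frak{b,d}}$ and $b_{n+1}/b_n\to\beta$ is exactly the right way to close that gap. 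For the boundary statement you apply \eqref{eqrelog} with $k=0$ and use the strict positivity of $\int_0^1\Psi_{\beta,\frak{d}}$ (Proposition~\ref{lemm20}) together with non-negativity of the terms to upgrade convergence along the subsequence $X=\beta^xb_n$ to convergence of the full series. This is the argument the paper has in mind.
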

For $\frak{b}=(\beta^k)_{k\ge0}$ with $\beta\in \bn$ and $\frak{d}=\{0,1,\ldots,\beta-1\}-\{u\}$ with some integer $u\in [1, \beta)$, one can see that $r_{\frak{b,d}}(n)$ is the indicator function of the set of positive integers whose $\beta$-adic representation contains no digit $\delta_i = u$. Hardy--Wright \cite[Theorem 144]{MR2445243} proved that $\zeta_{\frak{b,d}}(1)=\sum_{n\ge 1}r_{\frak{b,d}}(n)/n$ converges. Nathanson \cite[Corollary 2]{MR4215805} improved this result by proving that $\zeta_{\frak{b,d}}(s)$ has an abscissa of convergence $\sigma_c=\log_\beta(\beta-1)$. In fact, Nathanson \cite{MR4215805} established stronger results where his base is $(\beta^k)_{k\ge 0}$, but the set of digits $\frak{d}$ is more general and not fixed, satisfying certain asymptotic conditions.

\medskip

The analytic continuation of the zeta function $\zeta_{\frak{b, d}}(s)$ defined by \eqref{eqzeta} is the focus of our attention.
These zeta functions generalizes the classical Riemann zeta function $\zeta(s)$. It is well-known that $\zeta(s)$ can be
analytically continued to a meromorphic function on the entire complex plane with a simple pole at $s=1$ and residue $1$.
Moreover, we have $\zeta(-n)=(-1)^nB_{n+1}/(n+1)$ for all integers $n\ge 0$, where $B_n$ is the $n$-th Bernoulli number.
Our second result extends these properties of $\zeta(s)$ to a broader class of functions $\zeta_{\frak{b, d}}(s)$. The proof of this result, that is Theorem
 \ref{th3}, as well as a more detailed discussion, will be presented in Section \ref{sec3}.
\begin{theorem}\label{th3}
The zeta function $\zeta_{\frak{b, d}}(s)$ can be meromorphically continued to the half-plane $\re(s)>\log_\beta |\frak{d}|-\gamma$ for
$b_k=\beta^{k}+O(\beta^{(1-\gamma)k})$ with any fixed $\gamma\in(0,1]$, and to the entire complex plane for $b_k=\beta^{k}$, respectively. Furthermore, within the region where $\zeta_{\frak{b,d}}(s)$ can be extended, all possible singularities take the form of simple poles and are given by the expression:
$$p_{\beta, \frak{d}}(j,k)=\log_\beta |\frak{d}|-j-2\pi(\log\beta)^{-1} k\ri,\;\; (j\in \bn_0, k\in\bz).$$
The reside at $p_{\beta, \frak{d}}(j,k)$ equals
$
c_{\beta,\frak{d}}(j)\widehat{\Psi}_{\beta,\frak{d}}(k),
$
where $c_{\beta,\frak{d}}(\ell)$ is a constant defined by
\begin{equation*}
c_{\beta,\frak{d}}(j)=\sum_{\substack{\ell_1,\ell_2,\ldots,\ell_j\ge 0\\ 1\cdot \ell_1+2\cdot \ell_2+\cdots+j \ell_j=j}}\prod_{1\le h\le j}\frac{1}{\ell_h!}\left(\frac{[y^h]{\rm L}_{\frak{d}}(y)}{1-\beta^h}\right)^{\ell_h},
\end{equation*}
with $[y^h]{\rm L}_{\frak{d}}(y)$ is the coefficient of $y^h$ in the Maclaurin series expansion of $\log\big(\sum_{\delta\in \frak{d}}e^{-\delta y}\big)$; and  $\widehat{\Psi}_{\beta,\frak{d}}(k)$ is the $k$-th Fourier coefficient of the relative density function $\Psi_{\beta,\frak{d}}(u)$ determined by Proposition \ref{pro36}. Moreover, define that $c_{\beta,\frak{d}}(u)=0$ for all $u\not\in \bn_0$, then for each $n\in\bn_0$ one has
\begin{align*}
\zeta_{\frak{b,d}}(-n)=(-1)^nn!c_{\beta,\frak{d}}(n+\log_\beta |\frak{d}|)\widehat{\Psi}_{\beta,\frak{d}}(0)-{\bf 1}_{n=0},
\end{align*}
where ${\bf 1}_{event}$ is the indicator function.
\end{theorem}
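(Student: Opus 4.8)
The plan is to express $\zeta_{\frak{b,d}}(s)$ as a Mellin-type transform of the counting function $S_{\frak{b,d}}(x)$ and then use the asymptotic expansion provided by Theorem \ref{th1} together with the Fourier expansion of $\Psi_{\beta,\frak{d}}$ from Proposition \ref{pro36}. By partial summation (Abel summation) applied to $\zeta_{\frak{b,d}}(s)=\sum_{\lambda>0}r_{\frak{b,d}}(\lambda)\lambda^{-s}$, one has for $\re(s)>\log_\beta|\frak{d}|$
\begin{equation*}
\zeta_{\frak{b,d}}(s)=s\int_{\lambda_0}^{\infty}\frac{S_{\frak{b,d}}(x)}{x^{s+1}}\,\rd x+(\text{boundary terms})
\end{equation*}
where $\lambda_0>0$ is the smallest positive element of $\mathcal{N}_{\frak{b,d}}$. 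Into this integral I would substitute the main term $x^{\log_\beta|\frak{d}|}\Psi_{\beta,\frak{d}}(\log_\beta x)$ from \eqref{prosssc}. The error term there is $O(x^{\log_\beta|\frak{d}|-\gamma\kappa_{\beta,\frak{d}}\log_\beta|\frak{d}|+\varepsilon})$, so the corresponding tail integral converges and is holomorphic for $\re(s)>\log_\beta|\frak{d}|-\gamma\kappa_{\beta,\frak{d}}\log_\beta|\frak{d}|$ — this already gives meromorphic continuation past the abscissa, but to get all the way to $\re(s)>\log_\beta|\frak{d}|-\gamma$ (and to $\bc$ when $b_k=\beta^k$ exactly) one must iterate, peeling off successive terms of a full asymptotic expansion of $S_{\frak{b,d}}(x)$ rather than just the leading term.

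The key computation is the Mellin transform of the main term. Writing $x=\beta^u$ and using the Fourier series $\Psi_{\beta,\frak{d}}(u)=\sum_{k\in\bz}\widehat\Psi_{\beta,\frak{d}}(k)e^{2\pi\ri k u}$, one gets a sum over $k$ of integrals of the shape $\int \beta^{u(\log_\beta|\frak{d}|-s)}e^{2\pi\ri k u}\,\rd u$, which produce simple poles exactly at $s=\log_\beta|\frak{d}|-2\pi(\log\beta)^{-1}k\ri$ with residue proportional to $\widehat\Psi_{\beta,\frak{d}}(k)$. To obtain the whole family $p_{\beta,\frak{d}}(j,k)$ with $j\ge 0$ I would use the functional equation relating $S_{\frak{b,d}}$ at scale $\beta^{h+x}$ to the digit structure — concretely, the self-similarity underlying the definition \eqref{eqpsi} of $\Psi$ — to produce a full asymptotic expansion $S_{\frak{b,d}}(\beta^u)\sim\beta^{u\log_\beta|\frak{d}|}\sum_{j\ge 0}\beta^{-ju}\Psi^{(j)}_{\beta,\frak{d}}(u)$ where $\Psi^{(0)}=\Psi$ and the higher $\Psi^{(j)}$ are built from the Maclaurin coefficients of $\log(\sum_{\delta\in\frak{d}}e^{-\delta y})$; Mellin-transforming the $j$-th term contributes the poles at $\log_\beta|\frak{d}|-j-2\pi(\log\beta)^{-1}k\ri$ and gives the constants $c_{\beta,\frak{d}}(j)$ via the exponential-formula (Faà di Bruno / Bell polynomial) identity displayed in the statement. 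The residue factorizes as $c_{\beta,\frak{d}}(j)\widehat\Psi_{\beta,\frak{d}}(k)$ because the $j$-dependence (from the self-similar scaling) and the $k$-dependence (from the oscillation of $\Psi$) separate.

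For the special values $\zeta_{\frak{b,d}}(-n)$: here $s=-n$ is a negative integer, and the point $-n$ coincides with $p_{\beta,\frak{d}}(j,0)$ precisely when $j=n+\log_\beta|\frak{d}|\in\bn_0$; the stated formula $\zeta_{\frak{b,d}}(-n)=(-1)^n n!\,c_{\beta,\frak{d}}(n+\log_\beta|\frak{d}|)\widehat\Psi_{\beta,\frak{d}}(0)-{\bf 1}_{n=0}$ then comes from carefully evaluating the (regular part of the) continued integral at $s=-n$: the factor $s=-n$ in front of the Mellin integral, combined with the Gamma-function normalization appearing in $\widehat\Psi$, yields the $(-1)^n n!$; the $-{\bf 1}_{n=0}$ is the contribution of the boundary/constant term from Abel summation, which only survives at $s=0$. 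I expect the main obstacle to be bookkeeping: rigorously justifying the full asymptotic expansion of $S_{\frak{b,d}}(\beta^u)$ to all orders (including uniform control of the remainder after peeling off $N$ terms, which is where the hypothesis $b_k=\beta^k+O(\beta^{(1-\gamma)k})$ enters and limits the continuation to $\re(s)>\log_\beta|\frak{d}|-\gamma$ unless $b_k=\beta^k$ exactly), and then tracking the exponential-formula combinatorics so that the residues come out in the compact closed form with the $c_{\beta,\frak{d}}(j)$ as stated. The convergence of the Fourier series — needed to interchange $\sum_k$ with the Mellin integral — is guaranteed by Proposition \ref{lemm20} and the Zygmund theorem quoted after it.
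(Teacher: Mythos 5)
Your high-level picture of what should happen is right (Mellin transform, poles indexed by $j$ from the power series and by $k$ from the $1$-periodic oscillation, residue factorizing as $c_{\beta,\frak{d}}(j)\widehat\Psi_{\beta,\frak{d}}(k)$, the $(-1)^n n!$ coming from a Gamma-function cancellation, and $-{\bf 1}_{n=0}$ from a constant term), and you correctly identify that substituting the one-term asymptotic \eqref{prosssc} for $S_{\frak{b,d}}$ into the Mellin integral only gets you past the abscissa to $\re(s)>\log_\beta|\frak{d}|-\gamma\kappa_{\beta,\frak{d}}\log_\beta|\frak{d}|$, which is weaker than what is claimed. But the device you propose to close this gap is the problem: you assume, without justification, a full asymptotic expansion
$S_{\frak{b,d}}(\beta^u)\sim\beta^{u\log_\beta|\frak{d}|}\sum_{j\ge 0}\beta^{-ju}\Psi^{(j)}_{\beta,\frak{d}}(u)$
with $1$-periodic coefficient functions. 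No such expansion is proved anywhere in the paper (nor does it obviously exist): $S_{\frak{b,d}}$ is a genuinely irregular step function, the leading coefficient $\Psi_{\beta,\frak{d}}$ is only of bounded variation and H\"older class, and there is no mechanism in the paper's arguments that would force the remainder $S_{\frak{b,d}}(\beta^u)-\beta^{u\log_\beta|\frak{d}|}\Psi_{\beta,\frak{d}}(u)$ to itself admit a scaling limit of the same self-similar form. So the iteration you propose cannot be launched.

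What the paper does instead is to pass to the Laplace transform (generating function) ${\rm Z}_{\frak{b,d}}(e^{-t})=\sum_\lambda r_{\frak{b,d}}(\lambda)e^{-\lambda t}$ and write $\Gamma(s)\zeta_{\frak{b,d}}(s)=\int_0^\infty({\rm Z}_{\frak{b,d}}(e^{-t})-1)t^{s-1}\,\rd t$. For $b_k=\beta^k$, ${\rm Z}_{\frak{b,d}}(e^{-t})$ is a smooth, explicit infinite product in $t$, and Proposition \ref{prop1} (an Euler--Maclaurin computation applied to $\sum_{k\ge 0}{\rm L}_{\frak{d}}(\beta^k t)$) yields the \emph{exact, convergent} identity ${\rm Z}_{\frak{b,d}}(e^{-t})=t^{-\log_\beta|\frak{d}|}e^{P_{\beta,\frak{d}}(\log_\beta t)}\sum_{m\ge 0}c_{\beta,\frak{d}}(m)t^m$ for $t$ small (Lemma \ref{lem32}), with the $c_{\beta,\frak{d}}(m)$ defined as genuine Taylor coefficients of $\prod_{k\ge 1}|\frak{d}|/\sum_\delta e^{-\delta\beta^{-k}t}$; this is not an asymptotic expansion of $S_{\frak{b,d}}$, it is an algebraic identity for the smoothed transform, and it is what produces all the $j\ge 0$ poles. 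The Fourier-coefficient factor arises from the identity $\int_0^1 e^{P_{\beta,\frak{d}}(w)}e^{2\pi\ri kw}\,\rd w = \Gamma(1+\log_\beta|\frak{d}|+2\pi(\log\beta)^{-1}k\ri)\widehat\Psi_{\beta,\frak{d}}(k)$, proved separately in Proposition \ref{pro36}. For the perturbed base $b_k=\beta^k+O(\beta^{(1-\gamma)k})$, Lemma \ref{lem34} controls the change in ${\rm Z}_{\frak{b,d}}(e^{-t})$ to within $O(t^{-\log_\beta|\frak{d}|+\gamma})$, which is exactly what limits the continuation to $\re(s)>\log_\beta|\frak{d}|-\gamma$; this error estimate lives naturally at the generating-function level and does not require any regularity of $S_{\frak{b,d}}$ beyond the leading term. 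To repair your proposal you would need to replace the putative asymptotic expansion of $S_{\frak{b,d}}$ with this generating-function identity; as written, the key step of your argument is missing.
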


\section{Upper bounds and moments of $r_{\frak{b,d}}(\lambda)$}\label{sec2}
In this section, our focus is on studying upper bounds, average values,
and arbitrary moments of $r_{\frak{b,d}}(\lambda)$. Specifically,
we will provide proofs for Proposition \ref{lemm20},
Theorems \ref{th2} and \ref{th1}, as well as Corollary \ref{cor1}.
\subsection{Upper bound of $r_{\frak{b,d}}(\lambda)$}\label{sec21}
In this subsection we focus on the upper bound for the representation function $r_{\frak{b,d}}(\lambda)$.
We will begin by proving \eqref{eqrcs} of Theorem \ref{th2}.
\begin{proof}[Proof of \eqref{eqrcs}]Recall that in Theorem \ref{th2}, the basis $\frak{b}=(\beta^k)_{k\ge 0}$ with $\beta\in\bn$ and the set of digits $\frak{d}\subset \bn_0$ has the greatest common divisor $\gcd(\frak{d})=1$.
Recall that \eqref{eqrcs} in Theorem \ref{th2} is the following upper bound:
$$r_{\frak{b,d}}(n)\le \mu_{\beta,\frak{d}}n^{\log_\beta(\mu_{\beta,\frak{d}})},\;\; (n\in\bn).$$
Note that $\beta^k>n$ for all integers $k>\lfloor\log_\beta n\rfloor=:m$, we have
$$\mathcal{R}_{\frak{b, d}}(n)=\Big\{(\delta_0, \delta_1,\ldots,\delta_{m})\in {\frak d}^{1+m}:
\sum_{0\le k\le m}\delta_k\beta^k=n
\Big\}.$$
Let $n=\sum_{0\le k\le m} d_k\beta^k$ be the $\beta$-adic expansion of $n$, then for any $(\delta_0, \delta_1,\ldots,\delta_{m})\in \mathcal{R}_{\frak{b, d}}(n)$ and $0\le j\le m$ we have
$$\sum_{0\le k\le j} \delta_k\beta^k\equiv \sum_{0\le k\le j} d_k\beta^k\pmod {\beta^{j+1}}.$$
This means that $\beta^{-j}\big(\sum_{0\le k< j} d_k\beta^k-\sum_{0\le k< j} \delta_k\beta^k\big)\in\bz$, hence
$$\delta_j\equiv d_j+\beta^{-j}\Big(\sum_{0\le k< j} d_k\beta^k-\sum_{0\le k< j} \delta_k\beta^k\Big)\pmod {\beta}.$$
Therefore if $\delta_0,\delta_1,\ldots,\delta_{j-1}$ is determined, then the number of possible values of $\delta_j$ is at most
$$\max_{a\in\bz}|\{\delta\in\frak{d}: \delta\equiv a\pmod \beta\}|= \mu_{\beta, \frak{d}}.$$
Therefore,
$$r_{\frak{b, d}}(n)=|\mathcal{R}_{\frak{b, d}}(n)|\le \mu_{\beta, \frak{d}}^{1+m}\le \mu_{\beta, \frak{d}}^{1+\log_\beta n}=\mu_{\beta,\frak{d}}n^{\log_\beta(\mu_{\beta,\frak{d}})},$$
which completes the proof.
\end{proof}
We now study the more general situation considered in this paper. That is give the proof of \eqref{eqds1} of Theorem \ref{th1}.
Recall that which is stated as:
$$S_{\frak{b,d}}(x)-S_{\frak{b,d}}(x-\Delta)\le (1+\Delta) x^{(1-\kappa_{\beta,\frak{d}})\log_\beta|\frak{d}|+\varepsilon},$$
for any given $\varepsilon>0$ and all sufficiently large $x$, where $\kappa_{\beta, \frak{d}}$ is
the largest positive real number $u$ such that
$$\sum_{k\ge 1}\beta^{-\lfloor k/u\rfloor}(\max_{\delta\in\frak{d}}\delta)\le 1.$$
We need the following lemmas.
\begin{lemma}\label{lem21}
For any real number $u>\kappa_{\beta,\frak{d}}$, the subsequence $\frak{c}_u=(b_{j_k(u)})_{k\ge 0}$ of $\frak{b}$ with $j_k(u)=\lfloor k/u\rfloor$, and all $x\in\br$, we have
$$S_{\frak{c}_u,\frak{d}}(x+1)-S_{\frak{c}_u,\frak{d}}(x-1)=O_u(1).$$
In particular, $r_{\frak{c}_u,\frak{d}}(x)=O_u(1)$.
\end{lemma}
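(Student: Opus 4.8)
The plan is to reduce the lemma to a \emph{strong lacunarity} property of the sequence $\frak{c}_u=(c_k)_{k\ge 0}$, where $c_k:=b_{\lfloor k/u\rfloor}$: there should exist constants $\rho\in(0,1)$ and $C\ge 0$, depending only on $\beta,\frak{b},\frak{d},u$, with
\begin{equation}\label{eqstrlac}
\Big(\max_{\delta\in\frak{d}}\delta\Big)\sum_{0\le k'<k}c_{k'}\le \rho\, c_k+C\qquad\text{for all }k\ge 0,
\end{equation}
after which the two estimates fall out from a top-down recursion on partial sums.

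\emph{Step 1 (the inequality \eqref{eqstrlac}).} Since $b_{j+1}/b_j\to\beta>1$, fix $\varepsilon\in(0,\beta-1)$ and $K$ with $b_{j+1}/b_j\ge\beta-\varepsilon$ for $j\ge K$, so that $b_{j'}\le(\beta-\varepsilon)^{-(j-j')}b_j$ whenever $K\le j'\le j$. The combinatorial input is the superadditivity of the floor, $\lfloor k/u\rfloor-\lfloor k'/u\rfloor\ge\lfloor(k-k')/u\rfloor$ for $k\ge k'$. Split the sum in \eqref{eqstrlac} at $k_1:=\lceil Ku\rceil$; the indices $k'<k_1$ contribute only the finitely many bounded terms $b_0,\dots,b_K$, whose total is a constant absorbed into $C$, and for the rest,
$$\Big(\max_{\delta\in\frak{d}}\delta\Big)\sum_{k_1\le k'<k}c_{k'}\le\Big(\max_{\delta\in\frak{d}}\delta\Big)\,c_k\sum_{m\ge 1}(\beta-\varepsilon)^{-\lfloor m/u\rfloor}.$$
By the definition of $\kappa_{\beta,\frak{d}}$ together with the hypothesis on $u$ one has $\big(\max_{\delta\in\frak{d}}\delta\big)\sum_{m\ge 1}\beta^{-\lfloor m/u\rfloor}<1$, so by monotone convergence $\varepsilon$ can be chosen small enough that $\rho:=\big(\max_{\delta\in\frak{d}}\delta\big)\sum_{m\ge 1}(\beta-\varepsilon)^{-\lfloor m/u\rfloor}<1$; this proves \eqref{eqstrlac}.

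\emph{Step 2 (counting).} For $N\ge 0$ and an interval $I$ with $|I|\le 2$, put $\Phi_N(I):=\big|\{(\delta_0,\dots,\delta_N)\in\frak{d}^{N+1}:\sum_{k\le N}\delta_k c_k\in I\}\big|$. As $\sum_{k>N}\delta_k c_k\ge 0$ and $c_k\to\infty$, for every $x$ both $S_{\frak{c}_u,\frak{d}}(x+1)-S_{\frak{c}_u,\frak{d}}(x-1)$ and $r_{\frak{c}_u,\frak{d}}(x)$ are dominated by $\lim_{N}\Phi_N\big((x-1,x+1]\big)$. From $\Phi_N(I)=\sum_{\delta\in\frak{d}}\Phi_{N-1}(I-\delta c_N)$ and the fact that $\Phi_{N-1}$ is supported on $\big[0,(\max_{\delta\in\frak{d}}\delta)\sum_{k<N}c_k\big]\subseteq[0,\rho c_N+C]$ by \eqref{eqstrlac}, only the $\delta\in\frak{d}$ with $\delta c_N$ in a single interval of length $\rho c_N+C+|I|$ contribute; equivalently, $\delta$ ranges over an interval of length $\rho+(C+2)/c_N$. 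Since $\rho<1$ and $c_N\to\infty$, there is a threshold $N^\ast$, depending only on $\rho$ and $C$ (hence not on $x$ or $I$), past which that length is $<1$, so for $\frak{d}\subset\bn_0$ at most one $\delta$ contributes; consequently $a_N:=\max_{|I|\le 2}\Phi_N(I)$ is non-increasing for $N\ge N^\ast$, whence $a_N\le|\frak{d}|^{N^\ast+1}=O_u(1)$ for all $N$ (the bound being trivial for $N<N^\ast$). Passing to the limit gives $S_{\frak{c}_u,\frak{d}}(x+1)-S_{\frak{c}_u,\frak{d}}(x-1)=O_u(1)$ uniformly in $x$, and since $r_{\frak{c}_u,\frak{d}}(x)\le S_{\frak{c}_u,\frak{d}}(x+1)-S_{\frak{c}_u,\frak{d}}(x-1)$, the final assertion follows.

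The point I expect to need genuine care is the last step for an arbitrary finite digit set: if $\frak{d}$ has two digits at distance less than $\rho$, more than one $\delta$ may keep contributing for every $N$, and one must keep this branching from compounding over the $O(\log x)$ levels of the recursion. The branching factor is bounded — the at most $g$ contributing values $I-\delta c_N$ sit in $[0,\rho c_N+C]$ and are separated by at least $\big(\min\{\delta-\delta':\delta,\delta'\in\frak{d},\ \delta>\delta'\}\big)c_N$, forcing $g=O_u(1)$ — but one then has to track how the length-$\le 2$ window $I$ is split among the branches and recombined one level up. For $\frak{d}\subset\bn_0$, which is the case in every application of the lemma here (Fibonacci and Lucas numbers, $(\lfloor\tau^m\rfloor)_m$, and $\frak{b}=(\beta^k)$ with $\beta\in\bn$), the argument of Step 2 goes through verbatim.
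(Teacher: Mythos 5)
Your argument runs on the same engine as the paper's proof: both of you turn the numerical hypothesis $\big(\max_{\delta\in\frak{d}}\delta\big)\sum_{m\ge 1}\beta^{-\lfloor m/u\rfloor}<1$ (this is what is actually used; the displayed hypothesis $u>\kappa_{\beta,\frak{d}}$ is evidently a misprint for $u<\kappa_{\beta,\frak{d}}$, as the paper's later use of the lemma confirms) into a lacunarity statement for $\frak{c}_u$ via the superadditivity $\lfloor k/u\rfloor-\lfloor k'/u\rfloor\ge\lfloor(k-k')/u\rfloor$, and then extract a uniform bound by counting. The difference is in how the count is organized. The paper goes directly: there is an $n_u$ with $\sum_{k<n}\delta_kb_{j_k(u)}<b_{j_n(u)}-2$ for every digit string once $n\ge n_u$; consequently two finitely supported digit sequences whose images lie within $2$ of each other must agree at every index $\ge n_u$, and the count is at most $|\frak{d}|^{n_u}$. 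You instead run the level-by-level recursion $\Phi_N(I)=\sum_{\delta}\Phi_{N-1}(I-\delta c_N)$, argue that beyond a threshold $N^{\ast}$ only one digit can contribute, and conclude that $a_N=\max_{|I|\le 2}\Phi_N(I)$ stops growing; this reaches the same place by a slightly heavier route. The reservation you voice at the end about general real $\frak{d}$ is legitimate, but it is a weakness shared by the paper's own proof rather than one you introduced: the paper's displayed step $|\lambda-\lambda'|>b_{j_{k_m}(u)}-\sum_{k<k_m}\delta_k'b_{j_k(u)}$ already tacitly uses $\delta_{k_m}-\delta_{k_m}'\ge 1$, which is precisely the integer spacing your single-branch argument also relies on. For $\frak{d}\subset\bn_0$, which covers every place the lemma is used in the paper, both arguments are complete.
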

\begin{proof}We first notice that $\lim_{k\rightarrow+\infty}b_{k+1}/b_k=\beta$. Using reverse Fatou's lemma for a summation, for all $(\delta_0,\delta_1,\ldots,\delta_{n-1})\in \frak{d}^n$, we have:
\begin{align*}
\limsup_{n\rrw \infty}\left(\frac{1}{b_{j_n(u)}-2}\sum_{1\le k\le n}\delta_{n-k}b_{j_{n-k}(u)}\right)&\le \sum_{k\ge 1}\limsup_{n\rrw \infty}\left(\delta_{n-k}\frac{b_{j_{n-k}(u)}}{b_{j_n(u)}}{\bf 1}_{k<n}\right)\\
&\le (\max_{\delta\in {\frak d}}\delta) \sum_{k\ge 1}\limsup_{n\rrw \infty}\left(\beta^{j_{n-k}(u)-j_{n}(u)}{\bf 1}_{k<n}\right)\\
&\le (\max_{\delta\in {\frak d}}\delta) \sum_{k\ge 1}\beta^{-\lfloor k/u\rfloor}<1.
\end{align*}
Here we used the fact that $j_{n-k}(u)-j_{n}(u)=\lfloor(n-k)/u\rfloor-\lfloor n/u\rfloor\le -\lfloor k/u\rfloor$, assumption $u>\kappa_{\beta,\frak{d}}$ and the definition of $\kappa_{\beta,\frak{d}}$. Therefore, there exists a constant $n_{u}>0$ such that:
$$\sum_{1\le k\le n}\delta_{n-k}b_{j_{n-k}(u)}=\sum_{0\le k< n}\delta_kb_{j_{k}(u)}<b_{j_n(u)}-2,$$
for all $n\geq n_{u}$ and $(\delta_1,\ldots,\delta_n)\in \frak{d}^n$. This means that if we have
$$\left|\sum_{k\ge 0}\delta_kb_{j_k(u)}-\sum_{k\ge 0}\delta_k'b_{j_k(u)}\right|\le 2,$$
for some $(\delta_k)_{k\geq 0}, (\delta_k')_{k\geq 0}\in \frak{d}^{\mathbb{N}_0}$ with only finitely many $\delta_k$ and $\delta_k'$ being nonzero, then we must have $\delta_k=\delta_k'$ for all $k\geq n_u$.  In fact, if there exists a largest $k_m\in\bn$ with $k_m\ge n_u$ such that $\delta_{k_m}>\delta_{k_m}'$ and $\delta_{k}=\delta_{k}'=0$ for all $k>k_m$, then we have
$$\left|\sum_{k\ge 0}\delta_kb_{j_k(u)}-\sum_{k\ge 0}\delta_k'b_{j_k(u)}\right|=\left|\sum_{0\le k\le k_m}\delta_kb_{j_k(u)}-\sum_{0\le k\le k_m}\delta_k'b_{j_k(u)}\right|>b_{j_{k_m}(u)}-\sum_{0\le k<k_m}\delta_k'b_{j_k(u)}>2,$$
which is a contradiction! Therefore, we must have $\delta_k=\delta_k'$ for all $k\geq n_u$.
In particular, for any $x\in\br$, if there exists a $\lambda\in \mathcal{N}_{\frak{c}_u,\frak{d}}$ with a representation $\lambda=\sum_{k\ge 0}\delta_kb_{j_k(u)}$
such that $|\lambda-x|\le 1$, then using the monotonic increasing property of $S_{\frak{c}_u,\frak{d}}(x)$
we have
\begin{align*}
S_{\frak{c}_u,\frak{d}}(x+1)-S_{\frak{c}_u,\frak{d}}(x-1)&\le S_{\frak{c}_u,\frak{d}}(\lambda+2)-S_{\frak{c}_u,\frak{d}}(\lambda-2)\\
&=\sum_{\lambda-2<\lambda'\le \lambda+2}r_{\frak{c}_u,\frak{d}}(\lambda')\\
&\le\Big|\Big\{(\delta_k')_{k\geq 0}\in \frak{d}^{\mathbb{N}_0}:
\Big|\sum_{k\ge 0}\delta_kb_{j_k(u)}-\sum_{k\ge 0}\delta_k'b_{j_k(u)}\Big|\le 2 \Big\}\Big|\le |\frak{d}|^{n_u}=O_u(1).
\end{align*}
Otherwise, $S_{\frak{c}_u,\frak{d}}(x+1)-S_{\frak{c}_u,\frak{d}}(x-1)=0=O_u(1)$, which completes the proof.
\end{proof}
\begin{lemma}\label{lemm22} For any $\varepsilon>0$, we have
$$S_{\frak{b,d}}(\lambda+1)-S_{\frak{b,d}}(\lambda-1)\le \lambda^{(1-\kappa_{\beta,\frak{d}})\log_\beta |\frak{d}|+\varepsilon}.$$
for all sufficiently large $\lambda\in \mathcal{N}_{\frak{b},\frak{d}}$.
\end{lemma}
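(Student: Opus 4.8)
The plan is to turn the estimate into a counting problem over digit strings of bounded length and then feed it into Lemma~\ref{lem21}. Fix $\varepsilon>0$ and write $D=\log_\beta|\frak{d}|$. Since $b_{k+1}/b_k\to\beta>1$, averaging the logarithmic ratios $\log(b_{k+1}/b_k)\to\log\beta$ yields $\log b_k=(1+o(1))k\log\beta$; hence, letting $\delta_{\min}$ denote the smallest positive element of $\frak{d}$, the set $\{k:\delta_{\min}b_k\le\lambda+1\}$ is finite, and taking $N=N(\lambda)$ to be one more than its largest element gives $N=(1+o(1))\log_\beta\lambda$. No digit at a position $k\ge N$ can be nonzero in a representation of a number $\le\lambda+1$, so for all large $\lambda$,
$$S_{\frak{b,d}}(\lambda+1)-S_{\frak{b,d}}(\lambda-1)=\#\Big\{(\delta_k)_{0\le k<N}\in\frak{d}^{\,N}:\lambda-1<\sum_{0\le k<N}\delta_kb_k\le\lambda+1\Big\}.$$

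Assume first $\kappa_{\beta,\frak{d}}<1$ and fix any $u$ with $\kappa_{\beta,\frak{d}}<u<1$. With $j_k=\lfloor k/u\rfloor$ the sequence $(j_k)_{k\ge0}$ is strictly increasing, so $\frak{c}_u=(b_{j_k})_{k\ge0}$ is a genuine subsequence of $\frak{b}$ indexed by $I_u=\{j_k:k\ge0\}$. Put $I=I_u\cap[0,N)$ and $J=[0,N)\setminus I$. Since $j_k<N$ exactly when $k<uN$, one has $I=\{j_0,\dots,j_{|I|-1}\}$ with $|I|=\lceil uN\rceil\ge uN$, hence $|J|\le(1-u)N$, and $(b_k)_{k\in I}$ is exactly the list of the first $|I|$ terms of $\frak{c}_u$. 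For each of the at most $|\frak{d}|^{|J|}$ choices of the digits on $J$, writing $C=\sum_{k\in J}\delta_kb_k\ge0$, the number of ways to fill in the digits on $I$ with $\sum_{0\le k<N}\delta_kb_k\in(\lambda-1,\lambda+1]$, that is with $\sum_{k\in I}\delta_kb_k\in(\lambda-C-1,\lambda-C+1]$, is at most $S_{\frak{c}_u,\frak{d}}(\lambda-C+1)-S_{\frak{c}_u,\frak{d}}(\lambda-C-1)$ --- extend the finite string on $I$ by zeros, using $0\in\frak{d}$ --- and this is $O_u(1)$ by Lemma~\ref{lem21} applied at $x=\lambda-C$. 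Summing over the choices on $J$,
$$S_{\frak{b,d}}(\lambda+1)-S_{\frak{b,d}}(\lambda-1)\le O_u(1)\,|\frak{d}|^{|J|}\le O_u(1)\,|\frak{d}|^{(1-u)N}.$$
Because $N=(1+o(1))\log_\beta\lambda$ and $O_u(1)=\lambda^{o(1)}$, the right-hand side is $\le\lambda^{(1-u)D+\varepsilon}$ for all large $\lambda$, and $(1-u)D<(1-\kappa_{\beta,\frak{d}})D$ since $u>\kappa_{\beta,\frak{d}}$; this yields the claimed bound when $\kappa_{\beta,\frak{d}}<1$.

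When $\kappa_{\beta,\frak{d}}=1$ the bound to be proved is $\lambda^\varepsilon$. Pick any $u>1$; then $j_k=\lfloor k/u\rfloor$ is non-decreasing and onto $\bn_0$, so each $m\in[0,N)$ occurs at some slot $k_m$ of $\frak{c}_u$ with the $k_m$ distinct, and the map sending $(\delta_m)_{0\le m<N}$ to the string equal to $\delta_m$ at slot $k_m$ and $0$ elsewhere is injective and preserves $\sum_m\delta_mb_m$. Hence $S_{\frak{b,d}}(\lambda+1)-S_{\frak{b,d}}(\lambda-1)\le S_{\frak{c}_u,\frak{d}}(\lambda+1)-S_{\frak{c}_u,\frak{d}}(\lambda-1)=O_u(1)\le\lambda^\varepsilon$ for large $\lambda$, again by Lemma~\ref{lem21}.

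The analytic content is entirely in Lemma~\ref{lem21}, which supplies the uniformly bounded two-sided window function of the thinned system $\frak{c}_u$; the rest is bookkeeping. The point I expect to need the most care is the choice of $u$: it must exceed $\kappa_{\beta,\frak{d}}$ so that Lemma~\ref{lem21} applies, yet (when $\kappa_{\beta,\frak{d}}<1$) stay below $1$ so that $\frak{c}_u$ is an honest subsequence whose omitted positions $J$ have relative density $1-u<1-\kappa_{\beta,\frak{d}}$ --- it is exactly the $|\frak{d}|^{|J|}$ cost of restoring those positions that produces the exponent $(1-\kappa_{\beta,\frak{d}})D+\varepsilon$. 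The borderline case $\kappa_{\beta,\frak{d}}=1$ must be treated separately because there $\frak{c}_u$ carries repeated bases, so one embeds the truncated system into $\frak{c}_u$ rather than peeling off a prefix; in all cases the implied threshold on $\lambda$ is allowed to depend on $\varepsilon$, as the statement permits.
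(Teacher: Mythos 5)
Your proposal is correct and takes essentially the same route as the paper: split the index set into the $\frak{c}_u$-positions and their complement, use Lemma~\ref{lem21} to bound the window count on the $\frak{c}_u$-part by $O_u(1)$, and pay $|\frak{d}|^{|J|}$ for the complementary positions, where $|J|\le(1-u)N$ with $N=(1+o(1))\log_\beta\lambda$. The paper phrases the decomposition as a convolution $\sum r_{\frak{a}_u,\frak{d}}(\lambda_1)r_{\frak{c}_u,\frak{d}}(\lambda_2)$ over the removed sequence $\frak{a}_u$ and $\frak{c}_u$, while you count finite digit strings directly; these are the same idea. One genuine (if small) improvement of yours is the explicit treatment of the boundary case $\kappa_{\beta,\frak{d}}=1$, where $u>1$ makes $j_k=\lfloor k/u\rfloor$ non-injective and the complementary sequence $\frak{a}_u$ in the paper's proof is empty --- the paper glosses over this, whereas your embedding of the truncated system into $\frak{c}_u$ handles it cleanly. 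Also note that the paper's closing remark ``$u$ can equal any positive real number less than $\kappa_{\beta,\frak{d}}$'' is evidently a typo for ``greater than'' (Lemma~\ref{lem21} requires $u>\kappa_{\beta,\frak{d}}$), and your write-up gets this orientation right.
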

\begin{proof}
Let $\frak{a}_u=(a_k)_{\ge 0}$ be the sequence obtained by removing all the terms of $\frak{c}_u$ defined in Lemma \ref{lem21} from sequence $\frak{b}$. By the definition of $r_{\frak{b,d}}(\lambda)$, it is clear that:
\begin{align*}
S_{\frak{b,d}}(\lambda+1)-S_{\frak{b,d}}(\lambda-1)&=\sum_{\substack{\lambda_1,\lambda_2\ge 0\\ \lambda-1<\lambda_1+\lambda_2\le \lambda+1}}r_{\frak{a}_u,\frak{d}}(\lambda_1)r_{\frak{c}_u,\frak{d}}(\lambda_2)\\
&\le\sum_{0\le \lambda_1\le \lambda+1}r_{\frak{a}_u,\frak{d}}(\lambda_1) \sum_{\lambda-\lambda_1-1<\lambda_2\le \lambda-\lambda_1+1}r_{\frak{c}_u,\frak{d}}(\lambda_2)
\ll_u\sum_{0\le \lambda_1\le \lambda+1}r_{\frak{a}_u,\frak{d}}(\lambda_1).
\end{align*}
Here, we used Lemma \ref{lem21} which states that $S_{\frak{c}_u,\frak{d}}(x+1)-S_{\frak{c}_u,\frak{d}}(x-1)=O_u(1)$ for all $x\in\br$. Therefore:
\begin{align*}
S_{\frak{b,d}}(\lambda+1)-S_{\frak{b,d}}(\lambda-1)\ll_u \Big|\Big\{(\delta_0, \delta_1,\delta_2,\ldots)\in {\frak d}^{\bn_0}:
\sum_{k\ge 0}\delta_ka_k\le \lambda+1
\Big\}\Big|\le |\frak{d}|^{1+\max\{k:~ a_k\le \lambda+1\}}.
\end{align*}
Note that $\lim_{k\to\infty}b_{k+1}/b_k=\beta$ means that $\log_\beta b_{k}=k(1+o(1))$ as $k\to\infty$. Thus:
\begin{align*}
\max\{k: a_k\le n\}&=\max\{k: b_k\le n\}-\max\{k: b_{j_k(u)}\le n\}\\
&= \max\{k: k(1+o(1))\le \log_\beta n\}-\max\{k: u^{-1}k(1+o(1))\le \log_{\beta} n\}\\
&=\log_\beta n-u\log_{\beta}n+o(\log n).
\end{align*}
Hence as $\lambda\rrw+\infty$,
$$S_{\frak{b,d}}(\lambda+1)-S_{\frak{b,d}}(\lambda-1)\ll_u |\frak{d}|^{(1-u+o(1))\log_\beta \lambda}=\lambda^{(1-u+o(1))\log_\beta |\frak{d}|}.$$
Notice that $u$ can equal any positive real number less than $\kappa_{\beta,\frak{d}}$. With this observation, we can immediately obtain the proof of the lemma.
\end{proof}
Using  Lemma \ref{lemm22} we immediately obtain the upper bound  \eqref{eqds1} in Theorem \ref{th1}.

\begin{proof}[The proof of \eqref{eqds1}]For all sufficiently large $x$ and any integer $k\ge 0$, if there exists a $\lambda\in\mathcal{N}_{\frak{b,d}}$ such that $\lambda\in (x-k-1, x-k]$, then using the monotonic increasing property of $S_{\frak{b,d}}(x)$ and Lemma \ref{lemm22}, we can obtain:
\begin{align*}
S_{\frak{b,d}}(x-k)-S_{\frak{b,d}}(x-k-1)&\le S_{\frak{b,d}}(\lambda+1)-S_{\frak{b,d}}(\lambda-1)\\
&\le \lambda^{(1-\kappa_{\beta,\frak{d}})\log_\beta |\frak{d}|+\varepsilon}\le x^{(1-\kappa_{\beta,\frak{d}})\log_\beta |\frak{d}|+\varepsilon}.
\end{align*}
Otherwise, if no such $\lambda$ exists, we have
$
S_{\frak{b,d}}(x-k)-S_{\frak{b,d}}(x-k-1)=0\le x^{(1-\kappa_{\beta,\frak{d}})\log_\beta |\frak{d}|+\varepsilon}.
$ Therefore, we find that
\begin{align*}
S_{\frak{b,d}}(x)-S_{\frak{b,d}}(x-\Delta)\le \sum_{0\le k\le \Delta}\left(S_{\frak{b,d}}(x-k)-S_{\frak{b,d}}(x-k-1)\right)\le (1+\Delta) x^{(1-\kappa_{\beta,\frak{d}})\log_\beta |\frak{d}|+\varepsilon},
\end{align*}
which completes the proof.
\end{proof}
\subsection{Asymptotic formulas of $S_{\frak{b,d}}(x)$}\label{sec22}
In this subsection we focus on the average value, that is the zeroth moment of the representation
function $r_{\frak{b,d}}(\lambda)$. In particular, we complete the proofs of Proposition \ref{lemm20},
Theorems \ref{th2} and \ref{th1}. We begin with the following lemma.
\begin{lemma}\label{lemm21}For any $n\in\bn$ and $x\in\br$ we have
\begin{align}\label{eqsrc}
\frac{S_{\beta, \frak{d}}(\beta^{n+x})}{|\frak{d}|^{n+x}}=\frac{S_{\beta, \frak{d}}(\beta^x)}{|\frak{d}|^{x}}-|\frak{d}|^{-1}\sum_{0\le h<n}\frac{1}{|\frak{d}|^{h+x}}\sum_{\delta\in \frak{d}}\left(S_{\beta, \frak{d}}(\beta^{h+x})-S_{\beta, \frak{d}}(\beta^{h+x}-\beta^{-1}\delta)\right).
\end{align}
Moreover, the series \eqref{eqpsi} for $\Psi_{\beta,\frak{d}}(x)$ is absolutely convergence for any $x\in\br$, and for any $\kappa<\kappa_{\beta,\frak{d}}$ one has
$$|\frak{d}|^{-n-x}S_{\beta, \frak{d}}(\beta^{n+x})=\Psi_{\beta,\frak{d}}(x)+O_x(|\frak{d}|^{-\kappa n}),\;\;(n\rrw \infty, n\in\bn).$$
\end{lemma}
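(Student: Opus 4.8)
The argument rests on a functional equation for $S_{\beta,\frak{d}}$ obtained by conditioning on the lowest digit of a representation; iterating it telescopes to \eqref{eqsrc}, and the convergence statement and error term then come from inserting the gap bound \eqref{eqds1} proved in Section~\ref{sec21}.

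\emph{Step 1: the functional equation.} For any $y\in\br$, partition the representations $\sum_{k\ge0}\delta_k\beta^k\le y$ according to the value $\delta\in\frak{d}$ of $\delta_0$ and shift the tail $(\delta_1,\delta_2,\dots)$ down by one index; this is a bijection with $\frak{d}^{\bn_0}$ and yields
$$S_{\beta,\frak{d}}(y)=\sum_{\delta\in\frak{d}}S_{\beta,\frak{d}}\!\left(\frac{y-\delta}{\beta}\right),$$
where a term with $(y-\delta)/\beta<0$ simply vanishes. Specialising to $y=\beta^{h+1+x}$, so that $(y-\delta)/\beta=\beta^{h+x}-\beta^{-1}\delta$, adding and subtracting $|\frak{d}|\,S_{\beta,\frak{d}}(\beta^{h+x})$ and dividing by $|\frak{d}|^{h+1+x}$, this becomes a relation that expresses $S_{\beta,\frak{d}}(\beta^{h+1+x})/|\frak{d}|^{h+1+x}$ as $S_{\beta,\frak{d}}(\beta^{h+x})/|\frak{d}|^{h+x}$ minus a correction term of exactly the form occurring in \eqref{eqpsi}. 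Summing over $0\le h<n$ makes the left side telescope, producing \eqref{eqsrc}.

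\emph{Step 2: convergence and the error term.} In \eqref{eqsrc} and \eqref{eqpsi}, every inner term $S_{\beta,\frak{d}}(\beta^{h+x})-S_{\beta,\frak{d}}(\beta^{h+x}-\beta^{-1}\delta)$ is non-negative (monotonicity of $S_{\beta,\frak{d}}$) and is an increment of $S_{\beta,\frak{d}}$ over an interval of fixed length at most $\Delta_0:=\beta^{-1}\max_{\delta\in\frak{d}}\delta$. Fix $\kappa<\kappa_{\beta,\frak{d}}$ (note $\kappa_{\beta,\frak{d}}>0$, since $\sum_{k\ge1}\beta^{-\lfloor k/u\rfloor}\to0$ as $u\to0^+$). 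Applying \eqref{eqds1}, which holds for the base $(\beta^k)_{k\ge0}$, with increment $\Delta_0$ at the point $\beta^{h+x}$ and with $\varepsilon=\varepsilon(\kappa)>0$ small enough that $\varepsilon\le(\kappa_{\beta,\frak{d}}-\kappa)\log_\beta|\frak{d}|$, we get, for all $h$ beyond some $h_0(x)$,
$$S_{\beta,\frak{d}}(\beta^{h+x})-S_{\beta,\frak{d}}(\beta^{h+x}-\beta^{-1}\delta)\le(1+\Delta_0)\bigl(\beta^{h+x}\bigr)^{(1-\kappa_{\beta,\frak{d}})\log_\beta|\frak{d}|+\varepsilon}.$$
Dividing by $|\frak{d}|^{h+x}=\beta^{(h+x)\log_\beta|\frak{d}|}$ and using the choice of $\varepsilon$ shows that the $h$-th summand of the series in \eqref{eqpsi} is $O_x(|\frak{d}|^{-\kappa h})$; the remaining finitely many summands ($h\le h_0(x)$) are $O_x(1)$. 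Hence the series \eqref{eqpsi} converges absolutely and its tail $\sum_{h\ge n}$ is $O_x(|\frak{d}|^{-\kappa n})$. Since the right-hand side of \eqref{eqsrc} equals $\Psi_{\beta,\frak{d}}(x)$ plus precisely this tail, we obtain $|\frak{d}|^{-n-x}S_{\beta,\frak{d}}(\beta^{n+x})=\Psi_{\beta,\frak{d}}(x)+O_x(|\frak{d}|^{-\kappa n})$ as $n\to\infty$.

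\emph{The main obstacle.} Everything in Steps~1--2 is routine once \eqref{eqds1} is in hand; the only substantive ingredient is that bound. Indeed, \eqref{eqsrc} on its own gives nothing better than the trivial estimate $S_{\beta,\frak{d}}(\beta^{h+x})\le C_x|\frak{d}|^{h+x}$, for which the increments in \eqref{eqpsi} could a priori be of size $\asymp|\frak{d}|^{h}$ and the defining series of $\Psi_{\beta,\frak{d}}$ would diverge. What saves the argument is exactly that \eqref{eqds1} beats this trivial bound by a factor $|\frak{d}|^{-\kappa_{\beta,\frak{d}}h+o(h)}$, and this is invoked here as a black box from Section~\ref{sec21}.
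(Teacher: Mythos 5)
Your proposal is correct and follows essentially the same path as the paper: the same conditioning on the lowest digit $\delta_0$ to get $S_{\beta,\frak{d}}(y)=\sum_{\delta\in\frak{d}}S_{\beta,\frak{d}}\bigl((y-\delta)/\beta\bigr)$, the same add-and-subtract to make the normalized sequence telescope to \eqref{eqsrc}, and the same invocation of the gap bound \eqref{eqds1} (with $\varepsilon$ chosen small relative to $\kappa_{\beta,\frak{d}}-\kappa$) to bound the tail of \eqref{eqpsi} by $O_x(|\frak{d}|^{-\kappa n})$. Your closing remark correctly identifies \eqref{eqds1} as the nontrivial input; no gaps.
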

\begin{proof}
Using the definition of $S_{\beta, \frak{d}}(\cdot)$, we have
\begin{align*}
S_{\beta, \frak{d}}(\beta^{h+x})&=\left|\left\{(\delta_0,\delta_1,\delta_2,\ldots)\in \frak{d}^{\bn_0}:\sum_{k\ge 0}\delta_k \beta^k\le \beta^{h+x}\right\}\right|\\
&=\sum_{\delta\in \frak{d}}\left|\left\{(\delta_1,\delta_2,\ldots)\in \frak{d}^{\bn}:\sum_{k\ge 0}\delta_{k+1} \beta^k\le \beta^{h-1+x}-\frac{\delta}{\beta}\right\}\right|\\
&=|\frak{d}|S_{\beta, \frak{d}}(\beta^{h-1+x})-\sum_{\delta\in \frak{d}}\left(S_{\beta, \frak{d}}(\beta^{h-1+x})-S_{\beta, \frak{d}}(\beta^{h-1+x}-\beta^{-1}\delta)\right),
\end{align*}
for any $h\in\bn$. Thus,
\begin{align*}
\frac{S_{\beta, \frak{d}}(\beta^{h+x})}{|\frak{d}|^{h+x}}-\frac{S_{\beta, \frak{d}}(\beta^{h-1+x})}{|\frak{d}|^{h-1+x}}=-\frac{1}{|\frak{d}|^{h+x}}\sum_{\delta\in \frak{d}}\left(S_{\beta, \frak{d}}(\beta^{h-1+x})-S_{\beta, \frak{d}}(\beta^{h-1+x}-\beta^{-1}\delta)\right).
\end{align*}
Summing both sides above for $h=1$ to $n$ completes the proof of the identity \eqref{eqsrc}. Moreover, using the upper bound  \eqref{eqds1} in Theorem \ref{th1}, for any $\varepsilon>0$ we have
\begin{align*}
\sum_{k\ge n}\frac{1}{|\frak{d}|^{1+k+x}}\sum_{\delta\in \frak{d}}\left(S_{\beta, \frak{d}}(\beta^{k+x})-S_{\beta, \frak{d}}(\beta^{k+x}-\beta^{-1}\delta)\right)\ll_x \sum_{k\ge n}\frac{\beta^{k((1-\kappa_{\beta,\frak{d}})\log_\beta |\frak{d}|+\varepsilon)}}{|\frak{d}|^{k}}\ll_x |\frak{d}|^{-n(\kappa_{\beta,\frak{d}}-\varepsilon)},
\end{align*}
which means the series \eqref{eqpsi} for $\Psi_{\beta,\frak{d}}(x)$ is absolutely convergence for any $x\in\br$. This completes the proof.
\end{proof}
We now complete the proof of Theorem \ref{th2}, that is give the proof of \eqref{eqdas1s}.
\begin{proof}[Proof of \eqref{eqdas1s}]
We now prove the asymptotic formula for $S_{\frak{b,d}}(\beta^{n+x})$. Notice that in Theorem \ref{th2} we have $\mathcal{N}_{\frak{b,d}}\subseteq\bn_0$,
Using \eqref{eqsrc} of Lemma \ref{lemm21} implies that
\begin{align*}
\frac{S_{\beta, \frak{d}}(\beta^{n+x})}{|\frak{d}|^{n+x}}&=\frac{S_{\beta, \frak{d}}(\beta^{x})}{|\frak{d}|^{x}}-|\frak{d}|^{-1}\sum_{0\le h<n}\frac{1}{|\frak{d}|^{h+x}}\sum_{\delta\in \frak{d}}\left(S_{\beta, \frak{d}}(\beta^{h+x})-S_{\beta, \frak{d}}(\beta^{h+x}-\beta^{-1}\delta)\right)\\
&=\frac{S_{\beta, \frak{d}}(\beta^{x})}{|\frak{d}|^{x}}-|\frak{d}|^{-1}\sum_{0\le h<n}\frac{1}{|\frak{d}|^{h+x}}\sum_{\delta\in \frak{d}}\sum_{\beta^{h+x}-\beta^{-1}\delta<\ell\le \beta^{h+x}}r_{\frak{b,d}}(\ell).
\end{align*}
Note that for any given $x\in\br$ and all sufficiently large integers $h$, we have
\begin{align*}
0\le \sum_{\delta\in \frak{d}}\sum_{\beta^{h+x}-\beta^{-1}\delta<\ell\le \beta^{h+x}}r_{\frak{b,d}}(\ell)&\le \sum_{\delta\in \frak{d}}\sum_{\beta^{h+x}-\beta^{-1}\delta<\ell\le \beta^{h+x}}\mu_{\beta,\frak{d}}\beta^{(h+x)\log_\beta (\mu_{\beta,\frak{d}})}\\
&=\mu_{\beta,\frak{d}}\beta^{(h+x)\log_\beta (\mu_{\beta,\frak{d}})}\sum_{\delta\in \frak{d},\delta\neq 0}\left(\lfloor\beta^{h+x}\rfloor-\lceil \beta^{h+x}-\beta^{-1}\delta\rceil+1\right),
\end{align*}
where $\lceil u\rceil$ denotes the smallest integer not less than $u$. Also note that
$$\lfloor\beta^{h+x}\rfloor-\lceil \beta^{h+x}-\beta^{-1}\delta\rceil=\lfloor\delta/\beta\rfloor-\lceil \{\beta^{h+x}\}-\{\delta/\beta\}\rceil\le  \lfloor\delta/\beta\rfloor,$$
where $\{u\}=u-\lfloor u\rfloor$. Thus, noting that $1\le \mu_{\beta,\frak{d}}\le |\frak{d}|-1$, we can derive:
\begin{align*}
0\le S_{\beta, \frak{d}}(\beta^{n+x})-|\frak{d}|^{n+x}\Psi_{\beta,\frak{d}}(x)&\le |\frak{d}|^{-1}\sum_{h\ge n}\frac{|\frak{d}|^{n+x}}{|\frak{d}|^{h+x}}\sum_{\delta\in \frak{d}}\sum_{\beta^{h+x}-\beta^{-1}\delta<\ell\le \beta^{h+x}}r_{\frak{b,d}}(\ell)\\
&\le |\frak{d}|^{-1}\sum_{h\ge 0}\frac{\mu_{\beta,\frak{d}}^{n+h+x}}{|\frak{d}|^{h}}\sum_{\delta\in \frak{d},\delta\neq 0}\left(1+\lfloor\delta/\beta\rfloor\right)\\
&=\frac{\mu_{\beta,\frak{d}}^{1+n+x}}{|\frak{d}|-\mu_{\beta,\frak{d}}} \sum_{\delta\in \frak{d},\delta\neq 0}\left(1+\left\lfloor{\delta}/{\beta}\right\rfloor\right),
\end{align*}
which completes the proof of Theorem \ref{th2} by letting $\beta^{n+x}\mapsto x$.
\end{proof}

We can now use Lemma \ref{lemm21}, the upper bound \eqref{eqds1} in Theorem \ref{th1},
and Theorem \ref{th2} to complete the proof of Proposition \ref{lemm20}.
\begin{proof}[Proof of Proposition \ref{lemm20}]
The proof of the absolute convergence of the series \eqref{eqpsi} for $\Psi_{\beta,\frak{d}}(x)$
has been given by Lemma \ref{lemm21}. We first give the proof of the periodicity of the function $\Psi_{\beta,\frak{d}}(x)$. By the definition \eqref{eqpsi}, we have
\begin{align*}
\Psi_{\beta,\frak{d}}(x+1)=|\frak{d}|^{-x-1}S_{\beta, \frak{d}}(\beta^{1+x})-|\frak{d}|^{-1}\sum_{h\ge 1}\frac{1}{|\frak{d}|^{h+x}}\sum_{\delta\in \frak{d}}\left(S_{\beta, \frak{d}}(\beta^{h+x})-S_{\beta, \frak{d}}(\beta^{h+x}-\beta^{-1}\delta)\right).
\end{align*}
Using \eqref{eqsrc} in Lemma \ref{lemm21} with $n=1$ and inserting it into the above, we find that $\Psi_{\beta,\frak{d}}(x+1)=\Psi_{\beta,\frak{d}}(x)$, that is, $\Psi_{\beta,\frak{d}}(x)$ is periodic with period $1$.

\medskip

We next prove that $\Psi_{\beta,\frak{d}}(x)$ is a $1$-periodic Lipschitz function of certain order.
Using the asymptotic formula in Lemma \ref{lemm21} and the upper bound \eqref{eqds1} in Theorem \ref{th1} with $x, x+\Delta\in[0,2]$, and any integer $n=-\log_\beta |\Delta|+O(1)$ as $|\Delta|\rrw 0^+$, we have
\begin{align*}
\Psi_{\beta,\frak{d}}(x+\Delta)-\Psi_{\beta,\frak{d}}(x)&=\left(|\frak{d}|^{-n-x-\Delta}S_{\beta, \frak{d}}(\beta^{n+x+\Delta})-|\frak{d}|^{-n-x}S_{\beta, \frak{d}}(\beta^{n+x})\right)+O\left(|\frak{d}|^{-\kappa n}\right)\\
&\ll\frac{S_{\beta, \frak{d}}(\beta^{n+2})}{|\frak{d}|^{n+1}}||\frak{d}|^{-\Delta}-1| + \frac{\left|S_{\beta, \frak{d}}(\beta^{n+x+\Delta})-S_{\beta, \frak{d}}(\beta^{n+x})\right|}{|\frak{d}|^{n}}+|\frak{d}|^{-\kappa n}\\
&\ll |\Delta|+(1+\beta^{n+x}|\beta^{\Delta}-1|)|\frak{d}|^{-\kappa n}+|\frak{d}|^{-\kappa n},
\end{align*}
for any $\kappa<\kappa_{\beta,\frak{d}}$, that is
\begin{align}\label{eq1111}
\Psi_{\beta,\frak{d}}(x+\Delta)-\Psi_{\beta,\frak{d}}(x)
\ll |\Delta|+|\frak{d}|^{\kappa\log_\beta |\Delta|}\ll |\Delta|^{\kappa\log_\beta |\frak{d}|}.
\end{align}
Thus, by the periodicity of $\Psi_{\beta,\frak{d}}(x)$, for any
$\eta<\kappa_{\beta,\frak{d}}\log_\beta |\frak{d}|$, we have
$$\sup_{x\in\bt, h\neq 0}\frac{|\Psi_{\beta,\frak{d}}(x+h)-\Psi_{\beta,\frak{d}}(x)|}{|h|^{\eta}}<+\infty,$$
which means that $\Psi_{\beta,\frak{d}}(x)\in {\rm Lip}_{\eta}(\bt)$.
It remains to prove that for the case $\frak{b}=(\beta^k)_{k\ge 0}$ with $\beta\in\bn$ and $\frak{d}\subset \bn_0$
have the greatest common divisor $\gcd(\frak{d})=1$, we have $\Psi_{\beta,\frak{d}}(x)\in {\rm Lip}_{\eta}(\bt)$
with $\eta=\log_\beta |\frak{d}|-\log_\beta (\mu_{\beta,\frak{d}})$. To prove this we note that for any $x_2>x_1\ge 1$, one has
$$\left|S_{\beta, \frak{d}}(x_1)-S_{\beta, \frak{d}}(x_2)\right|\le \left(1+|x_2-x_1|\right)
\max_{x_1\le \ell\le x_2} r_{\frak{b,d}}(\ell),$$
hence using Theorem \ref{th2} implies
$$\left|S_{\beta, \frak{d}}(\beta^{n+x+\Delta})-S_{\beta, \frak{d}}(\beta^{n+x})\right|
\ll (1+\beta^n|\beta^{\Delta}-1|)\mu_{\beta,\frak{d}}^n\ll \mu_{\beta,\frak{d}}^n,$$
holds for $x, x+\Delta\in[0,2]$, and any integer $n=-\log_\beta |\Delta|+O(1)$ as $|\Delta|\rrw 0^+$.
Therefore, by employing Theorem \ref{th2} and using the same argument as in \eqref{eq1111}, we have
\begin{align*}
\Psi_{\beta,\frak{d}}(x+\Delta)-\Psi_{\beta,\frak{d}}(x)\ll |\Delta|+|\frak{d}|^{(1-\log_{|\frak{d}|}\mu_{\beta,\frak{d}})\log_\beta |\Delta|}
\ll|\Delta|^{\log_\beta |\frak{d}|-\log_\beta (\mu_{\beta,\frak{d}})}.
\end{align*}
Here we used the fact that $\log_\beta |\frak{d}|-\log_\beta (\mu_{\beta,\frak{d}})\ge \log_\beta |\frak{d}|-\log_\beta(|\frak{d}|/\beta)=1$. This proves that $\Psi_{\beta,\frak{d}}(x)\in {\rm Lip}_{\eta}(\bt)$
with $\eta=\log_\beta |\frak{d}|-\log_\beta (\mu_{\beta,\frak{d}})$.

\medskip

We now prove that $\Psi_{\beta,\frak{d}}(x)\in {\rm BV}(\bt)$. For any given $b\in\br$, $n\in\bn$ and any subdivision $b-1\le x_0<x_1<\cdots<x_k=b$ of $[b-1,b]$, we can use the monotonicity of the function $S_{\beta, \frak{d}}(\cdot)$ to get:
\begin{align*}
\sum_{0\le j<k}&\left|\frac{S_{\beta, \frak{d}}(\beta^{n+x_{j+1}})}{|\frak{d}|^{n+x_{j+1}}}-\frac{S_{\beta, \frak{d}}(\beta^{n+x_j})}{|\frak{d}|^{n+x_j}}\right|\\
&\qquad \le \sum_{0\le j<k}\left(\frac{\left|S_{\beta, \frak{d}}(\beta^{n+x_{j+1}})-S_{\beta, \frak{d}}(\beta^{n+x_j})\right|}{|\frak{d}|^{n+x_{j+1}}}+\frac{S_{\beta, \frak{d}}(\beta^{n+x_j})}{|\frak{d}|^n}\left|\frac{1}{|\frak{d}|^{x_j}}-\frac{1}{|\frak{d}|^{x_{j+1}}}\right|\right)\\
&\qquad \le \frac{S_{\beta, \frak{d}}(\beta^{n+b})-S_{\beta, \frak{d}}(\beta^{n+b-1})}{|\frak{d}|^{n+b-1}}+\frac{S_{\beta, \frak{d}}(\beta^{n+b})}{|\frak{d}|^n}\left(\frac{1}{|\frak{d}|^{b-1}}-\frac{1}{|\frak{d}|^{b}}\right)\le 2|\frak{d}|\frac{S_{\beta, \frak{d}}(\beta^{n+b})}{|\frak{d}|^{n+b}}.
\end{align*}
Since the total variation of $|\frak{d}|^{-n-x}S_{\beta, \frak{d}}(\beta^{n+x})$ is the supremum of the above sum over all possible subdivisions of $[b-1, b]$, we have that the total variation satisfies:
$$T_{b-1}^b(|\frak{d}|^{-n-x}S_{\beta, \frak{d}}(\beta^{n+x}))\le 2|\frak{d}|(|\frak{d}|^{-n-b}S_{\beta, \frak{d}}(\beta^{n+b})).$$
Since $\lim_{n\rightarrow \infty}|\frak{d}|^{-n-x}S_{\beta, \frak{d}}(\beta^{n+x})=\Psi_{\beta,\frak{d}}(x)$ for each $x\in [b-1,b]$, we can apply \cite[Chapter 5.2, Problem 9, p.104]{MR1013117} to obtain:
\begin{align*}
T_{b-1}^b(\Psi_{\beta,\frak{d}}(x))&\le \liminf_{n\rrw \infty}T_{b-1}^b\left(|\frak{d}|^{-n-x}S_{\beta, \frak{d}}(\beta^{n+x})\right)\\
&\le 2|\frak{d}|\liminf_{n\rrw \infty}\left(|\frak{d}|^{-n-b}S_{\beta, \frak{d}}(\beta^{n+b})\right)=2|\frak{d}|\Psi_{\beta,\frak{d}}(b).
\end{align*}
Notice that $\Psi_{\beta,\frak{d}}(x)\in {\rm Lip}_{\eta}(\bt)$ for some $\eta>0$. Therefore, we have $T_{b-1}^b(\Psi_{\beta,\frak{d}}(x))<+\infty$ uniformly for any $b$. This means it has bounded variation on $\bt$.

\medskip

Finally, we prove that $\Psi_{\beta,\frak{d}}(x)$ is strictly positive. For any $\log_\beta x\ge (\beta-1)^{-1}(\max_{\delta\in \frak{d}}\delta)$, note that
$$\sum_{0\le k<n}\delta_k\beta^k\le (\max_{\delta\in \frak{d}}\delta)\frac{\beta^n-1}{\beta-1}<\frac{(\max_{\delta\in \frak{d}}\delta)\beta^n}{\beta-1}\le \beta^{n+x},$$
we have
\begin{align*}
|\frak{d}|^{-n-x}S_{\beta, \frak{d}}(\beta^{n+x})
&\ge |\frak{d}|^{-n-x}\Big|\Big\{(\delta_0, \delta_1,\delta_2,\ldots)\in {\frak d}^{\bn_0}:
\sum_{0\le k<n}\delta_k\beta^k\le x\beta^{n}
\Big\}\Big|=|\frak{d}|^{-x}.
\end{align*}
This means that
$$\Psi_{\beta,\frak{d}}(x)=\lim_{n\rrw+\infty}|\frak{d}|^{-n-x}S_{\beta, \frak{d}}(\beta^{n+x})\ge |\frak{d}|^{-x},$$ for all $\log_\beta x\ge (\beta-1)^{-1}(\max_{\delta\in \frak{d}}\delta)$.  Notice that $\Psi_{\beta,\frak{d}}(x)$ is a periodic function of period $1$ we obtain the proof that $\Psi_{\beta,\frak{d}}(x)$ is strictly positive, we have completed the proof.
\end{proof}

Under Proposition \ref{lemm20} and Lemma \ref{lemm21}, we are ready to complete the proofs of \eqref{eqdas1} and \eqref{prosssc} in Theorem \ref{th1}.
\begin{proof}[Proof of \eqref{eqdas1}]Since $\lim_{k\rightarrow +\infty} b_{k+1}/b_k = \beta$, we know that for any $x\in \br$,
there exists $n_x\in\mathbb{Z}$ such that for all $n\geq n_x$, we have $b_{n+\lfloor x\rfloor+2}> \beta^{\lfloor x\rfloor+1}b_{n}\ge \beta^xb_{n}$. We find that
\begin{align*}
S_{\frak{b, d}}(\beta^xb_n)&=\left|\left\{(\delta_0,\delta_1,\ldots, \delta_{n+\lfloor x\rfloor+1})\in \frak{d}^{n+\lfloor x\rfloor+2}:
\sum_{0\le k< n+\lfloor x\rfloor+2}\delta_k b_k\le \beta^{x}b_n\right\}\right|\nonumber\\
&=\left|\left\{(\delta_{-1-\lfloor x\rfloor},\delta_{-\lfloor x\rfloor},\ldots, \delta_{n})\in \frak{d}^{n+\lfloor x\rfloor+2}:
\sum_{-\lfloor x\rfloor-1\le k\le n}\delta_k \left(\frac{1}{\beta^{k}}+ \frac{b_{n-k}}{b_n}-\frac{1}{\beta^{k}}\right)\le \beta^{x}\right\}\right|.
\end{align*}
Hence if we write that
\begin{align*}
\left|\frac{1}{\beta^x}\sum_{-\lfloor x\rfloor-1\le k\le n}\delta_k \left(\frac{b_{n-k}}{b_n}-\frac{1}{\beta^{k}}\right)\right|\le \frac{\max_{\delta\in\frak{d}}\delta}{\beta^x}\sum_{-\lfloor x\rfloor-1\le k\le n}\left|\frac{b_{n-k}}{b_n}-\frac{1}{\beta^{k}}\right|=:\varepsilon_{\beta,\frak{d}}(n,x),
\end{align*}
then we have
$$S_{\beta,\frak{d}}((1-\varepsilon_{\beta,\frak{d}}(n,x))\beta^{n+x})\le S_{\frak{b, d}}(\beta^xb_n)\le S_{\beta, \frak{d}}((1+\varepsilon_{\beta,\frak{d}}(n,x))\beta^{n+x}).$$
Using reverse Fatou's lemma, we have
\begin{align*}
\limsup_{n\rrw +\infty}\varepsilon_{\beta,\frak{d}}(n,x)&=\frac{\max_{\delta\in\frak{d}}\delta}{\beta^x}
\limsup_{n\rrw +\infty}\sum_{k\ge -1-\lfloor x\rfloor}\left|\frac{b_{n-k}}{b_n}-\frac{1}{\beta^{k}}\right|{\bf 1}_{k\le n}\\
&\le \frac{\max_{\delta\in\frak{d}}\delta}{\beta^x}\sum_{k\ge -1-\lfloor x\rfloor}\limsup_{n\rrw +\infty}\left(\left|\frac{b_{n-k}}{b_n}-\frac{1}{\beta^{k}}\right|{\bf 1}_{k\le n}\right)=0.
\end{align*}
Further, using Lemma \ref{lemm21} and Proposition \ref{lemm20} that $\Psi_{\beta,\frak{d}}(x)\in {\rm Lip}_{\eta}(\bt)$ with $\eta=\kappa\log_\beta |\frak{d}|$
for any positive number $\kappa<\kappa_{\beta,\frak{d}}$, we have
\begin{align*}
\frac{S_{\beta,\frak{d}}\left((1\pm \varepsilon_{\beta,\frak{d}}(n,x))\beta^{n+x}\right)}{|\frak{d}|^{n+x}}&=|\frak{d}|^{\log_\beta(1\pm \varepsilon_{\beta,\frak{d}}(n,x))}\Psi_{\beta,\frak{d}}\left(x+\log_\beta (1\pm \varepsilon_{\beta,\frak{d}}(n,x))\right)+O_x\left(|\frak{d}|^{-\kappa n}\right)\\
&=\Psi_{\beta,\frak{d}}(x)+O\left(\varepsilon_{\beta,\frak{d}}(n,x)+\varepsilon_{\beta,\frak{d}}(n,x)^{\kappa \log_\beta |\frak{d}|}+|\frak{d}|^{-\kappa n}\right).
\end{align*}
as integer $n\rrw +\infty$. Therefore,
\begin{equation}\label{eqgsas}
|\frak{d}|^{-n-x}S_{\frak{b, d}}(\beta^xb_n)=\Psi_{\beta,\frak{d}}(x)+O\left(\varepsilon_{\beta,\frak{d}}(n,x)^{\kappa \log_\beta |\frak{d}|}+|\frak{d}|^{-\kappa n}\right),
\end{equation}
by notice that $\lim_{n\rrw +\infty}\varepsilon_{\beta,\frak{d}}(n,x)=0$ we complete the proof of \eqref{eqdas1}.
\end{proof}
\begin{proof}[Proof of \eqref{prosssc}]
We write $x=\beta^{n+u}$ with $n=\lfloor \log_\beta x\rfloor$ and $u=\{\log_\beta x\}\in[0,1)$. Using the definition $S_{\frak{b, d}}(\cdot)$, for all sufficiently large integer $x$ we have
\begin{align*}
S_{\frak{b, d}}(x)&=\left|\left\{(\delta_0,\delta_1,\ldots ,\delta_{n+1})\in \frak{d}^{n+2}:\sum_{0\le k\le n+1}\delta_k (\beta^k+b_k-\beta^k)\le \beta^{n+u}\right\}\right|.
\end{align*}
Hence if we write that
\begin{align*}
\left|\frac{1}{\beta^{n+u}}\sum_{0\le k\le n+1}\delta_k (b_k-\beta^k)\right|\le (\max_{\delta\in\frak{d}}\delta)\beta^{-n}\sum_{0\le k\le n+1}\left|b_k-\beta^k\right|=:\varepsilon_{\beta,\frak{d}}(x),
\end{align*}
then we have
$$S_{\beta,\frak{d}}((1-\varepsilon_{\beta,\frak{d}}(x))x)\le S_{\frak{b, d}}(x)\le S_{\beta, \frak{d}}((1+\varepsilon_{\beta,\frak{d}}(x))x).$$
Using the condition that $b_k=\beta^k+O(\beta^{(1-\gamma)k})$ with $\gamma\in(0,1]$, we estimate that
\begin{align*}
\varepsilon_{\beta,\frak{d}}(x)\ll \beta^{-n}\sum_{0\le k\le n+1}\beta^{(1-\gamma)k}
\ll_\gamma \beta^{-n}n{\bf 1}_{\gamma=1}+\beta^{-\gamma n}\ll (x^{-1}\log x){\bf 1}_{\gamma=1}+x^{-\gamma}.
\end{align*}
Therefore, by using the similar arguments to the proof of \eqref{eqgsas}, we have
\begin{align*}
x^{-\log_\beta |\frak{d}|}S_{\frak{b, d}}(x)&=\Psi_{\beta,\frak{d}}(\log_\beta x)+O\left(\varepsilon_{\beta,\frak{d}}(x)^{\kappa \log_\beta |\frak{d}|}+|\frak{d}|^{-\kappa \log_\beta x}\right)\\
&=\Psi_{\beta,\frak{d}}(\log_\beta x)+O\left(x^{-\kappa \log_\beta |\frak{d}|}(\log x)^{\kappa \log_\beta |\frak{d}|}+x^{-\kappa \gamma\log_\beta |\frak{d}|}\right),
\end{align*}
for any $\kappa<\kappa_{\beta,\frak{d}}$. This finishes the proof of \eqref{prosssc}.
\end{proof}
\subsection{Moments of the representation function $r_{\frak{b,d}}(\lambda)$}
In this subsection we study the asymptotics of arbitrary moments of the representation function $r_{\frak{b,d}}(\lambda)$, that is the sum
$$\sum_{0< \lambda\le x}\lambda^kr_{\frak{b,d}}(\lambda),$$
for each $k\in \br$, as $x\rrw+\infty$. We notice that the zeroth moment has been studied in previous subsection.
\medskip

For any fixed $k, x\in\br$ and $n\in\bn$, using Abel's summation formula implies
\begin{align*}
\sum_{\beta^xb_0< \lambda\le \beta^xb_n}\lambda^kr_{\frak{b,d}}(\lambda)=& (\beta^xb_n)^kS_{\frak{b,d}}(\beta^xb_n)-(\beta^xb_0)^kS_{\frak{b,d}}(\beta^xb_0)-k\int_{\beta^xb_0}^{\beta^xb_n}S_{\frak{b,d}}(u)u^{k-1}\rd u.
\end{align*}
Thus by using the scaling limit \eqref{eqdas1} in Theorem \ref{th1}, as $n\rrw\infty$
\begin{align}\label{eqprim}
\sum_{0< \lambda\le \beta^xb_n}\lambda^kr_{\frak{b,d}}(\lambda)=& (\beta^xb_n)^k|\frak{d}|^{n+x}\Psi_{\beta,\frak{d}}(x)(1+o(1))-k\int_{\beta^xb_0}^{\beta^xb_n}S_{\frak{b,d}}(u)u^{k-1}\rd u+O(1).
\end{align}
We split the integration in \eqref{eqprim} as
\begin{align*}
\int_{\beta^xb_0}^{\beta^xb_n}S_{\frak{b,d}}(u)u^{k-1}\rd u=&\sum_{0\le h< n}|\frak{d}|^{h+x}(\beta^xb_h)^k\int_{1}^{\frac{b_{h+1}}{b_{h}}}\frac{S_{\frak{b,d}}(\beta^{x+\log_\beta u}b_{h})}{|\frak{d}|^{h+x+\log_\beta u}}u^{k+\log_\beta|\frak{d}|-1}\rd u.
\end{align*}
Notice that $\lim_{h\rrw\infty}b_{h+1}/b_h=\beta$ and the scaling limit \eqref{eqdas1} in Theorem \ref{th1} again, we have
$$\lim_{h\rrw\infty}\int_{1}^{\frac{b_{h+1}}{b_{h}}}\frac{S_{\frak{b,d}}(\beta^{x+\log_\beta u}b_{h})}{|\frak{d}|^{h+x+\log_\beta u}}u^{k+\log_\beta|\frak{d}|-1}=
\int_{1}^{\beta}\Psi_{\beta,\frak{d}}(x+\log_\beta u)u^{k+\log_\beta|\frak{d}|-1}\rd u.$$
Therefore, we further rewrite the integration in \eqref{eqprim} as the following:
\begin{align*}
\int_{\beta^xb_0}^{\beta^xb_n}S_{\frak{b,d}}(u)u^{k-1}\rd u=|\frak{d}|^{x}\beta^{kx}\sum_{0\le h< n}|\frak{d}|^{h}b_h^k\left(I_{\beta,\frak{d}}^{(k)}(x)+E_{\frak{b,d}}^{(k)}(x,h)\right),
\end{align*}
where $I_{\beta,\frak{d}}^{(k)}(x)$ is the main term
\begin{align}\label{eqiii}
I_{\beta,\frak{d}}^{(k)}(x)&=\int_{1}^{\beta}\Psi_{\beta,\frak{d}}(x+\log_\beta u)u^{k+\log_\beta|\frak{d}|-1}\rd u\nonumber\\
&=(\log \beta)\int_{0}^{1}\Psi_{\beta,\frak{d}}(x+w)\beta^{(k+\log_\beta|\frak{d}|)w}\rd w,
\end{align}
and is strictly positive due to the strictly positivity of the relative density function $\Psi_{\beta,\frak{d}}(x)$, and $E_{\frak{b,d}}^{(k)}(x,h)$ is an error term given by
$$E_{\frak{b,d}}^{(k)}(x,h)=\int_{1}^{\frac{b_{h+1}}{b_{h}}}\frac{S_{\frak{b,d}}(\beta^{x+\log_\beta u}b_{h})}{|\frak{d}|^{h+x+\log_\beta u}}u^{k+\log_\beta|\frak{d}|-1}\rd u-I_{\beta,\frak{d}}^{(k)}(x),$$
with $\lim_{h\rrw\infty}E_{\frak{b,d}}^{(k)}(x,h)=0$. Combining above yields
\begin{align}\label{eq222}
\sum_{0< \lambda\le \beta^xb_n}\lambda^kr_{\frak{b,d}}(\lambda)=&(|\frak{d}|^{h}b_n^k)|\frak{d}|^k\beta^{kx}\Psi_{\beta,\frak{d}}(x)(1+o(1))\nonumber\\
&-k|\frak{d}|^{x}\beta^{kx}I_{\beta,\frak{d}}^{(k)}(x)\sum_{0\le h< n}\left(1+\widetilde{E}_{\frak{b,d}}^{(k)}(x,h)\right)|\frak{d}|^{h}b_h^k+O(1),
\end{align}
as $n\rrw\infty$, where $\widetilde{E}_{\frak{b,d}}^{(k)}(x,h)=E_{\frak{b,d}}^{(k)}(x,h)/I_{\frak{b,d}}^{(k)}(x)$.

\medskip

We are ready to complete the proof of Corollary \ref{cor1}. For $k>-\log_\beta |\frak{d}|$ we have
\begin{align*}
\lim_{n\rrw +\infty}\sum_{0\le h< n}\left(1+\widetilde{E}_{\frak{b,d}}^{(k)}(x,h)\right)\frac{|\frak{d}|^{h}b_h^k}{|\frak{d}|^{n}b_n^k}
&=\sum_{h\ge 1}\lim_{n\rrw +\infty}\left(1+\widetilde{E}_{\frak{b,d}}^{(k)}(x,n-h)\right)\frac{|\frak{d}|^{n-h}b_{n-h}^k}{|\frak{d}|^{n}b_n^k}{\bf 1}_{h\le n}\\
&=\sum_{h\ge 1}(|\frak{d}|\beta^{k})^{-h}=\frac{1}{|\frak{d}|\beta^{k}-1},
\end{align*}
by using Lebesgue's dominated convergence theorem. Therefore, as $n\rrw\infty$
\begin{align*}
\frac{(\beta^xb_n)^{-k}}{|\frak{d}|^{n+x}}\sum_{0< \lambda\le \beta^xb_n}\lambda^kr_{\frak{b,d}}(\lambda)=\Psi_{\beta, \frak{d}}(x)-\frac{k}{|\frak{d}|\beta^{k}-1}I_{\beta,\frak{d}}^{(k)}(x)+o(1).
\end{align*}
Substituting the formula \eqref{eqiii} of $I_{\beta,\frak{d}}^{(k)}(x)$ into above and then letting $k\mapsto k-\log_\beta |\frak{d}|$
completes the proof of \eqref{eqrekm} in Corollary \ref{cor1}.

\smallskip

For $k=-\log_\beta |\frak{d}|$ with $\sum_{h\ge 0}|\frak{d}|^{h-\log_\beta b_h}<+\infty$,
by \eqref{eq222} we immediately have
\begin{align*}
\sum_{\lambda\in\mathcal{N}_{\frak{b,d}}^+}\frac{r_{\frak{b,d}}(\lambda)}{\lambda^{\log_\beta |\frak{d}|}}
\ll 1+\lim_{n\rrw\infty}\sum_{0\le k<n}|\frak{d}|^{h}b_h^{-\log_\beta |\frak{d}|}=1+\sum_{h\ge 0}|\frak{d}|^{h-\log_\beta b_h}<+\infty.
\end{align*}
For $k=-\log_\beta |\frak{d}|$ with $\sum_{h\ge 0}|\frak{d}|^{h-\log_\beta b_h}=+\infty$, using the fact that
$\lim_{h\rrw+\infty}\widetilde{E}_{\frak{b,d}}^{(k)}(x,h)=0$ and the formula \eqref{eqiii} of $I_{\beta,\frak{d}}^{(k)}(x)$ into \eqref{eq222} implies that
\begin{align*}
\sum_{0<\lambda\le \beta^xb_n}\frac{r_{\frak{b,d}}(\lambda)}{\lambda^{\log_\beta |\frak{d}|}}\sim & |\frak{d}|^{n-\log_\beta b_n}\Psi_{\beta,\frak{d}}(x)+(\log_\beta |\frak{d}|)\sum_{0\le h< n}|\frak{d}|^{h-\log_\beta b_h}\int_{0}^1\Psi_{\beta,\frak{d}}(u)\rd u,
\end{align*}
as $n\rrw+\infty$. Therefore, we can prove \eqref{eqrelog} in Corollary \ref{cor1} by using the following Lemma \ref{lemaa} with $a_k=|\frak{d}|^{k-\log_\beta b_k}$. By combining this with the above results, we obtain the proof of Corollary \ref{cor1}.
\begin{lemma}\label{lemaa}Let $(a_k)_{k\ge 0}$ be a sequence of positive real numbers such that $\lim\limits_{n\rrw\infty}a_{n+1}/a_n=1$. Then, we have
$$\lim_{n\rrw \infty}\frac{a_n}{\sum_{0\le k< n}a_k}=0.$$
\end{lemma}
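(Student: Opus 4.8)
The plan is to prove that $\limsup_{n\to\infty} a_n/S_n \le 2/L$ for every fixed positive integer $L$, where $S_n:=\sum_{0\le k<n}a_k$; since $a_n/S_n>0$ always, letting $L\to\infty$ then yields the claimed limit $0$. The whole argument rests on converting the one-step hypothesis $a_{n+1}/a_n\to 1$ into a uniform-in-$j$ comparison of $a_{n-j}$ with $a_n$ for $1\le j\le L$, via a telescoping product.

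First I would fix $L\in\bn$ and choose $\varepsilon>0$ small enough that $(1+\varepsilon)^{L}\le 2$. By the hypothesis there is an integer $M$ such that $a_{m+1}/a_m\le 1+\varepsilon$ for all $m\ge M$. Then for every $n\ge M+L$ and every $1\le j\le L$, writing
\[
\frac{a_{n-j}}{a_n}=\prod_{i=1}^{j}\frac{a_{n-i}}{a_{n-i+1}}
\]
and bounding each of the (at most $L$) factors from below by $(1+\varepsilon)^{-1}$ — which is legitimate since every index $n-i$ occurring here is at least $M$ — gives $a_{n-j}\ge (1+\varepsilon)^{-L}a_n\ge \tfrac12 a_n$.

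Next I would simply discard all but the $L$ terms immediately preceding index $n$: for $n\ge M+L$,
\[
S_n=\sum_{k=0}^{n-1}a_k\ \ge\ \sum_{j=1}^{L}a_{n-j}\ \ge\ \frac{L}{2}\,a_n ,
\]
hence $0<a_n/S_n\le 2/L$ for all such $n$. Therefore $\limsup_{n\to\infty}a_n/S_n\le 2/L$, and since $L$ was arbitrary the limit equals $0$.

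There is no genuine obstacle in this lemma; the only subtlety is that the estimate $(1+\varepsilon)^{-L}\ge\tfrac12$ must be arranged by choosing $\varepsilon$ \emph{after} $L$, so that a product of $L$ near-$1$ ratios stays bounded below — the telescoping identity above is exactly what upgrades the single-step assumption to this bound, and after that only monotonicity of the partial sums is used.
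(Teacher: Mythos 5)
Your argument is correct and essentially the same as the paper's: both convert the ratio hypothesis $a_{n+1}/a_n\to 1$ into a telescoped lower bound $a_{n-j}\ge c^{\,j}a_n$ with $c$ close to $1$, and then lower-bound $S_n=\sum_{0\le k<n}a_k$ by summing enough such nearby terms. The only cosmetic difference is that the paper takes $c=1-\varepsilon$ and sums the full geometric tail from $n_\varepsilon$ to $n$, obtaining $\limsup_{n\to\infty}a_n/S_n\le \varepsilon/(1-\varepsilon)$ and letting $\varepsilon\to 0^+$, whereas you take $c=(1+\varepsilon)^{-1}$ over a fixed window of $L$ terms with $\varepsilon$ chosen after $L$, obtaining $\limsup_{n\to\infty}a_n/S_n\le 2/L$ and letting $L\to\infty$.
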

\begin{proof}Since $\lim\limits_{n\rrw\infty}a_{n+1}/a_n=1$, we have for any $\varepsilon\in(0,1)$, there exists a $n_\varepsilon\in\bn$
such that for all $k\ge n_{\varepsilon}$, one has $a_{k}\ge (1-\varepsilon)a_{k+1}\ge\cdots\ge (1-\varepsilon)^{n-k}a_n$ for any $n>k$. Therefore
$$\frac{a_n}{\sum_{0\le k< n}a_k}\le \frac{a_n}{\sum_{n_{\varepsilon}\le k< n}a_k}\le \frac{a_n}{\sum_{n_{\varepsilon}\le k< n}(1-\varepsilon)^{n-k}a_n}=\frac{\varepsilon}{(1-\varepsilon)(1-(1-\varepsilon)^{n-n_\varepsilon})}.$$
Therefore,
$$0\le \lim_{n\rrw \infty}\frac{a_n}{\sum_{0\le k< n}a_k}\le \lim_{n\rrw \infty}\frac{\varepsilon}{(1-\varepsilon)(1-(1-\varepsilon)^{n-n_\varepsilon})}=\frac{\varepsilon}{1-\varepsilon}.$$
Letting $\varepsilon\rrw 0^+$ implies the proof of the lemma.
\end{proof}

\section{Generating functions and associated zeta functions of $r_{\frak{b, d}}(\lambda)$}\label{sec3}
In this section, we study the asymptotics of generating function of the representation function $r_{\frak{b, d}}(\lambda)$ and the analytic continuation of the associated zeta function $\zeta_{\frak{b,d}}(s)$. In particular, we give the proofs of Proposition \ref{pro36} and Theorem \ref{th3}.
\subsection{On the generating functions of $r_{\frak{b, d}}(\lambda)$}\label{sec31}
From \eqref{eqr}, we see that the representation function $r_{\frak{b,d}}(\lambda)$ can be obtained from its generating function:
\begin{equation}\label{eqm1}
{\rm Z}_{\frak{b, d}}(e^{-t}):=\sum_{\lambda \in \mathcal{N}_{\frak{b,d}}}r_{\frak{b, d}}(\lambda)\exp(-\lambda t)=\prod_{k\ge 0}\sum_{ \delta\in {\frak d}}\exp(-t\delta b_k).
\end{equation}
The series and infinite product in \eqref{eqm1} converge for $\Re(t)>0$ under the assumptions of $\frak{b}$ and $\frak{d}$. To present our results, we define ${\rm L}_{\frak{d}}(y)$ for all $y>0$ as follows:
\begin{equation}\label{eq1}
{\rm L}_{\frak{d}}(y):=\log\Big(\sum_{\delta\in \frak{d}}e^{-\delta y}\Big),
\end{equation}
We also define $P_{\beta, \frak{d}}(w)$ for all $w\in\br$ as follows:
\begin{equation}\label{eq2}
P_{\beta, \frak{d}}(w):=\sum_{k\in\bz}\left(k+w+1/2\right)\left({\rm L}_{\frak{d}}(\beta^{k+w})-{\rm L}_{\frak{d}}(\beta^{k+1+w})\right)+\frac{1}{2}\log|\frak{d}|.
\end{equation}
One can check that the above series is absolutely convergent and defines a smooth periodic function on the real line $\br$ with period 1. We will first demonstrate the following identity for ${\rm L}_{\frak{d}}(y)$, which we believe is of independent interest.
\begin{proposition}\label{prop1}For any $\beta>1$ and all $t>0$ we have
\begin{align*}
\sum_{k\ge 0}{\rm L}_{\frak{d}}(\beta^{k}t)=-(\log_\beta t)\log |\frak{d}|+P_{\beta, \frak{d}}(\log_\beta t)+\sum_{k\ge 1} \left(\log|\frak{d}|-{\rm L}_{\frak{d}}(\beta^{-k}t)\right).
\end{align*}
\end{proposition}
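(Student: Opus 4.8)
The plan is to reduce the identity to a statement about the single–variable function $f(w):={\rm L}_{\frak{d}}(\beta^{w})$; with $w=\log_\beta t$ one has ${\rm L}_{\frak{d}}(\beta^{k}t)=f(k+w)$ and ${\rm L}_{\frak{d}}(\beta^{-k}t)=f(-k+w)$, so Proposition \ref{prop1} is equivalent to
$$\sum_{k\ge 0}f(k+w)=-w\log|\frak{d}|+P_{\beta,\frak{d}}(w)+\sum_{k\ge 1}\bigl(\log|\frak{d}|-f(-k+w)\bigr).$$
First I would record two boundary estimates for ${\rm L}_{\frak{d}}$. Since $0\in\frak{d}$ and all digits are nonnegative, writing $\delta_{\min}:=\min\{\delta\in\frak{d}:\delta>0\}$ gives ${\rm L}_{\frak{d}}(y)=\log\bigl(1+\sum_{\delta\in\frak{d},\,\delta>0}e^{-\delta y}\bigr)=O(e^{-\delta_{\min}y})$ as $y\to+\infty$; and a first–order Taylor expansion of $\sum_{\delta\in\frak{d}}e^{-\delta y}$ at $y=0$ yields $\log|\frak{d}|-{\rm L}_{\frak{d}}(y)=\frac{y}{|\frak{d}|}\sum_{\delta\in\frak{d}}\delta+O(y^{2})=O(y)$ as $y\to 0^{+}$. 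The first estimate gives super-exponential convergence of $\sum_{k\ge 0}f(k+w)$, the second gives convergence of $\sum_{k\ge 1}\bigl(\log|\frak{d}|-f(-k+w)\bigr)$, and together they re-derive the absolute convergence of the bilateral series defining $P_{\beta,\frak{d}}(w)$ (which may also simply be quoted from the remark preceding the Proposition).

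The heart of the argument is a summation–by–parts on a symmetric truncation of the defining sum of $P_{\beta,\frak{d}}$. For integers $M,N\ge 1$ set
$$S_{M,N}:=\sum_{k=-M}^{N}\bigl(k+w+1/2\bigr)\bigl(f(k+w)-f(k+1+w)\bigr).$$
Shifting $k\mapsto k-1$ in the part carrying $f(k+1+w)$ and recombining over the common range telescopes the weights to $1$ and isolates two boundary terms:
$$S_{M,N}=\sum_{k=-M+1}^{N}f(k+w)+\bigl(-M+w+1/2\bigr)f(-M+w)-\bigl(N+w+1/2\bigr)f(N+1+w).$$
Letting $N\to+\infty$ kills the last term by the super-exponential decay of $f$ at $+\infty$. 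Letting $M\to+\infty$, I would write $\sum_{k=-M+1}^{-1}f(k+w)=(M-1)\log|\frak{d}|-\sum_{j=1}^{M-1}\bigl(\log|\frak{d}|-f(-j+w)\bigr)$ and $\bigl(-M+w+1/2\bigr)f(-M+w)=\bigl(-M+w+1/2\bigr)\log|\frak{d}|-\bigl(-M+w+1/2\bigr)\bigl(\log|\frak{d}|-f(-M+w)\bigr)$; the two $M\log|\frak{d}|$ contributions cancel, the residual error $\bigl(-M+w+1/2\bigr)\bigl(\log|\frak{d}|-f(-M+w)\bigr)=O(M\beta^{-M})\to 0$ by the second boundary estimate, and the constant surviving the cancellation is $\bigl(w-1/2\bigr)\log|\frak{d}|$. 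Since $S_{M,N}\to\sum_{k\in\bz}(k+w+1/2)(f(k+w)-f(k+1+w))$ by absolute convergence, this gives
$$\sum_{k\in\bz}\bigl(k+w+1/2\bigr)\bigl(f(k+w)-f(k+1+w)\bigr)=\bigl(w-1/2\bigr)\log|\frak{d}|+\sum_{k\ge 0}f(k+w)-\sum_{k\ge 1}\bigl(\log|\frak{d}|-f(-k+w)\bigr).$$
Adding $\frac{1}{2}\log|\frak{d}|$ to both sides converts the left-hand side into $P_{\beta,\frak{d}}(w)$; rearranging produces the displayed equivalent form, and substituting back $f(w)={\rm L}_{\frak{d}}(\beta^{w})$, $w=\log_\beta t$ completes the proof.

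The step I expect to be the main obstacle is the limit $M\to+\infty$: two separately divergent pieces appear — the $(M-1)\log|\frak{d}|$ from the tail of $\sum_{k<0}f(k+w)$ and the divergent boundary term $(-M+w+1/2)f(-M+w)$ — and the argument hinges on recombining them so that the divergences annihilate, leaving only $(w-1/2)\log|\frak{d}|$. This is precisely where the \emph{quantitative} vanishing $\log|\frak{d}|-{\rm L}_{\frak{d}}(y)=O(y)$ as $y\to 0^{+}$ is essential — a bare $o(1)$ would not control $M\bigl(\log|\frak{d}|-f(-M+w)\bigr)$ — and one must also note that computing $\lim_{M}\lim_{N}S_{M,N}$ rather than a joint limit is harmless, since the bilateral series for $P_{\beta,\frak{d}}$ converges absolutely.
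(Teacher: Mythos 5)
Your proof is correct. It is, however, not the paper's main argument: the paper derives the identity via the Euler--Maclaurin summation formula applied to $f(x)={\rm L}_{\frak d}(\beta^{x+w})$, followed by a change of variables $x\mapsto w+\log_\beta y$, integration by parts, and a dyadic decomposition of the resulting integral $\int_0^\infty \lfloor\log_\beta y\rfloor\,{\rm L}_{\frak d}'(\beta^w y)\beta^w\,\rd y$ that reproduces the bilateral series defining $P_{\beta,\frak d}$. Your route instead runs the telescoping/Abel-summation argument directly on the series defining $P_{\beta,\frak d}$, truncates to $S_{M,N}$, cancels the two divergent $M\log|\frak d|$ pieces against each other, and controls the residual boundary term by the quantitative estimate $\log|\frak d|-{\rm L}_{\frak d}(y)=O(y)$. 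This is precisely the ``telescoping sum method'' that the paper sketches in the remark immediately after Proposition~\ref{prop1} as an alternative, more direct verification; the author there comments that while it proves the identity, it ``does not offer an explanation for why it holds,'' whereas the Euler--Maclaurin derivation makes the appearance of the sawtooth $\psi(x)=x-\lfloor x\rfloor-1/2$ (and hence of the weight $k+w+1/2$) conceptually natural. So: same mathematical content, opposite direction of discovery. Your handling of the iterated limit and the $O(M\beta^{-M})$ control of the boundary term is exactly the point that needs care, and you treated it correctly; one could sharpen the exposition by noting that absolute convergence of the bilateral series (which you already established from the two boundary estimates) is what licenses replacing the symmetric truncation $\sum_{-N\le k<N}$ used in the paper's remark by the asymmetric $\sum_{-M\le k\le N}$ and iterating the limits.
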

\begin{proof}
Let $a,b\in\bz$ and $f$ is $C^1$ on $[a, b]$. The well-known Euler-Maclaurin summation formula stated as
$$\sum_{a\le\ell\le b}f(\ell)=\int_{a}^b\left(f(x)+\psi(x)f'(x)\right)\,dx+\frac{1}{2}(f(b)+f(a)),$$
where $\psi(x)=x-\lfloor x\rfloor-1/2$. Using this formula with $f(x)={\rm L}_{\frak{d}}(\beta^{x+w})$, $a=0$ and $b\rrw+\infty$, we have
\begin{align*}
\sum_{\ell\ge 0}{\rm L}_{\frak{d}}(\beta^{\ell+w})
=&\int_{0}^\infty\left({\rm L}_{\frak{d}}(\beta^{x+w})+\psi(x){\rm L}_{\frak{d}}'(\beta^{x+w})\beta^{x+w}\log\beta\right)\rd x+\frac{1}{2}{\rm L}_{\frak{d}}(\beta^w)\\
=&\int_{w}^\infty\left(\psi(x-w)-x\right){\rm L}_{\frak{d}}'(\beta^{x})\beta^{x}\log\beta\rd x+\left(\frac{1}{2}-w\right){\rm L}_{\frak{d}}(\beta^w).
\end{align*}
Here we replaced $x$ by $x-w$ and integration by parts to the functions ${\rm L}_{\frak{d}}(\beta^{x})$ and $x$. Now, let $x\mapsto w+\log_\beta y$, we arrive at
\begin{align*}
\sum_{\ell\ge 0}{\rm L}_{\frak{d}}(\beta^{\ell+w})
=&\int_{1}^\infty\left(\psi(\log_\beta y)-\log_\beta y-w\right){\rm L}_{\frak{d}}'(\beta^{w}y)\beta^{w}\rd y+\left(\frac{1}{2}-w\right){\rm L}_{\frak{d}}(\beta^w)\\
=&-\int_{1}^\infty\left(\lfloor\log_\beta y\rfloor+w+1/2\right){\rm L}_{\frak{d}}'(\beta^{w}y)\beta^{w}\rd y+\left(\frac{1}{2}-w\right){\rm L}_{\frak{d}}(\beta^w).
\end{align*}
Note that
\begin{align*}
\int_{0}^\infty&\left(\lfloor\log_\beta y\rfloor+w+1/2\right){\rm L}_{\frak{d}}'(\beta^{w}y)\beta^{w}\rd y\\
&=\sum_{k\in\bz}\left(k+w+1/2\right)\int_{\beta^{k}}^{\beta^{k+1}}{\rm L}_{\frak{d}}'(\beta^{w}y)\rd (\beta^{w}y)\\
&=\sum_{k\in\bz}\left(k+w+1/2\right)\left({\rm L}_{\frak{d}}(\beta^{k+1+w})-{\rm L}_{\frak{d}}(\beta^{k+w})\right)=\frac{1}{2}\log|\frak{d}|-P_{\beta, \frak{d}}(w).
\end{align*}
Here we used the definition \eqref{eq2} of $P_{\beta, \frak{d}}(w)$. We further have
\begin{align*}
\sum_{\ell\ge 0}{\rm L}_{\frak{d}}(\beta^{\ell+w})
=&P_{\beta, \frak{d}}(w)-\frac{1}{2}\log|\frak{d}|+\int_{0}^1\left(\lfloor\log_\beta y\rfloor+w+1/2\right){\rm L}_{\frak{d}}'(\beta^{w}y)\beta^{w}\rd y+\left(\frac{1}{2}-w\right){\rm L}_{\frak{d}}(\beta^w)\\
=&P_{\beta, \frak{d}}(w)+\int_{0}^1\left(\lfloor\log_\beta y\rfloor+1\right){\rm L}_{\frak{d}}'(\beta^{w}y)\rd (\beta^{w}y)-w{\rm L}_{\frak{d}}(0).
\end{align*}
Finally, by note that
\begin{align*}
\int_{0}^1&\left(\lfloor\log_\beta y\rfloor+1\right){\rm L}_{\frak{d}}'(\beta^{w}y)\rd (\beta^{w}y)\\
&=\sum_{k=-\infty }^{-1}(k+1)\int_{\beta^{k}}^{\beta^{k+1}}{\rm L}_{\frak{d}}'(\beta^{w}y)\rd (\beta^{w}y)\\
&=\sum_{k=-\infty }^{-1}(k+1)\left(({\rm L}_{\frak{d}}(\beta^{k+1+w})-\log|\frak{d}|)-({\rm L}_{\frak{d}}(\beta^{k+w})-\log|{\frak d}|)\right)\\
&=-\sum_{k=-\infty }^{-1}\left({\rm L}_{\frak{d}}(\beta^{k+w})-\log|{\frak d}|\right),
\end{align*}
and replace $w$ by $\log_\beta t$, we complete the proof of this proposition.
\end{proof}
\begin{remark}
A telescoping sum method also provides direct proof of the identity, but it does not offer an explanation for why it holds. In fact, for any $w\in\br$ we have
\begin{align*}
P_{\beta, \frak{d}}(w)-&\frac{1}{2}\log|\frak{d}|\\
=&\lim_{N\rrw +\infty}\sum_{-N\le k<N}\left(k+w+1/2\right)\left({\rm L}_{\frak{d}}(\beta^{k+w})-{\rm L}_{\frak{d}}(\beta^{k+1+w})\right)\\
=&\lim_{N\rrw +\infty}\sum_{-N\le k<N}\left(\left(k+w+1/2\right){\rm L}_{\frak{d}}(\beta^{k+w})-\left(k+w+3/2\right){\rm L}_{\frak{d}}(\beta^{k+1+w})+{\rm L}_{\frak{d}}(\beta^{k+1+w})\right)\\
=&\lim_{N\rrw +\infty}\left(\left(1/2+w-N\right){\rm L}_{\frak{d}}(\beta^{w-N})-\left(N+w+1/2\right){\rm L}_{\frak{d}}(\beta^{N+w})+\sum_{-N\le k<N}{\rm L}_{\frak{d}}(\beta^{k+1+w})\right).
\end{align*}
Using the definition of ${\rm L}_{\frak{d}}(y)$,
\begin{align*}
\left(1/2+w-N\right)&{\rm L}_{\frak{d}}(\beta^{w-N})-\left(N+w+1/2\right){\rm L}_{\frak{d}}(\beta^{N+w})\\
=&\left(1/2+w-N\right)\log \left(\sum_{ \delta\in\frak{d}}e^{-\delta\beta^{-N+w}}\right)+O((N+w)e^{-\beta^{N+w}})\\
=&\left(1/2+w-N\right)\left(\log|\frak{d}|+O(\beta^{w-N})\right)+O((N+w)e^{-\beta^{N+w}}),
\end{align*}
Therefore
\begin{align*}
P_{\beta, \frak{d}}(w)-\frac{1}{2}\log|\frak{d}|&=(w+1/2)\log|\frak{d}|+\sum_{k\in\bz}\left({\rm L}_{\frak{d}}(\beta^{k+1+w})-{\bf 1}_{k<0}\log |\frak{d}|\right)\\
&=(w-1/2)\log|\frak{d}|+\sum_{k\in\bz}\left({\rm L}_{\frak{d}}(\beta^{k+w})-{\bf 1}_{k<0}\log |\frak{d}|\right).
\end{align*}
Letting $w\mapsto \log_\beta t$ in above we arrive Proposition \ref{prop1}.
\end{remark}
\medskip

Proposition \ref{prop1} can be rewritten as
\begin{align*}
{\rm Z}_{\frak{b,d}}(e^{-t})=t^{-\log_\beta |\frak{d}|}e^{P_{\beta, \frak{d}}(\log_\beta t)}\prod_{k\ge 1}\frac{|\frak{d}|}{\sum_{\delta\in \frak{d}}e^{-\delta\beta^{-k}t}}.
\end{align*}
Since the infinite product in above convergence absolutely for all small enough $|t|$. Therefore defined analytic function on a neighbored of $t=0$, and hence have a power series expansion:
\begin{equation*}
\sum_{\ell\ge 0}c_{\beta,\frak{d}}(\ell)t^\ell:=\prod_{k\ge 1}\frac{|\frak{d}|}{\sum_{\delta\in \frak{d}}e^{-\delta\beta^{-k}t}},
\end{equation*}
which is convergence absolutely for all $|t|<\sigma_{\beta, \frak{d}}$ with some constant $\sigma_{\beta, \frak{d}}>0$.
We define that
\begin{equation}\label{eqpr1}
\rho_{\beta,\frak{d}}=\min\{h\in\bn_0: \beta^{-h}<\sigma_{\beta,\frak{d}}\}.
\end{equation}
\begin{remark}\label{rem32}We remark that $\ell !c_{\beta,\frak{d}}(\ell)$ analog the usual Bernoulli numbers. In fact, note that for the case when $\beta\in\bn$ and $\frak{d}=\{0,1,\ldots,\beta-1\}$,
$$\prod_{k\ge 1}\frac{|\frak{d}|}{\sum_{\delta\in \frak{d}}e^{-\delta\beta^{-k}t}}=\prod_{k\ge 1}\frac{\beta(1-e^{-\beta^{-k}t})}{1-e^{-\beta^{1-k}t}}=\frac{1}{1-e^{-t}}\lim_{N\rrw+\infty}(\beta^N(1-e^{-\beta^{-N}t}))=\frac{t}{1-e^{-t}},$$
we see that $\ell !c_{\beta,\frak{d}}(\ell)=(-1)^\ell B_\ell$, where $B_\ell$ is the $\ell$-th Bernoulli number. Moreover,
$$t^{\log_\beta |\frak{d}|}{\rm Z}_{\frak{b,d}}(e^{-t})=t\prod_{k\ge 0}\sum_{0\le \delta<\beta}e^{-\delta\beta^{k}t}=t\prod_{k\ge 0}\frac{1-e^{-\beta^{k+1} t}}{1-e^{-\beta^k t}}=\frac{t}{1-e^{-t}}.$$
Therefore in this case we have $e^{P_{\beta, \frak{d}}(\log_\beta t)}=1$,
that is $P_{\beta, \frak{d}}(w)=0$.
\end{remark}

We now give a formula for $c_{\beta,\frak{d}}(\ell)$ for the general case of the set of digits $\frak{d}$. Let $[y^h]{\rm L}_{\frak{d}}(y)$ denote the coefficient of $y^h$ in the Maclaurin series expansion of ${\rm L}_{\frak{d}}(y)$. Using \eqref{eq1} we have
\begin{align*}
\sum_{\ell\ge 0}c_{\beta,\frak{d}}(\ell)t^\ell&=\exp\left(\sum_{k\ge 1} \left(\log|\frak{d}|-{\rm L}_{\frak{d}}(\beta^{-k}t)\right)\right)\\
&=\exp\left(\sum_{k\ge 1} \left(\log|\frak{d}|-\sum_{h\ge 0}([y^h]{\rm L}_{\frak{d}}(y))(\beta^{-k}t)^h\right)\right)\\
&=\exp\left(-\sum_{h\ge 1}t^h([y^h]{\rm L}_{\frak{d}}(y))\sum_{k\ge 1}\beta^{-hk}\right)=\prod_{h\ge 1}\exp\left(t^h\frac{[y^h]{\rm L}_{\frak{d}}(y)}{1-\beta^h}\right).
\end{align*}
Therefore, by use of the fact that $e^x=\sum_{n\ge 0}x^n/n!$, we immediately obtain
\begin{equation}\label{eq2'}
c_{\beta,\frak{d}}(m)=\sum_{\substack{\ell_1,\ell_2,\ldots,\ell_m\ge 0\\ 1\cdot \ell_1+2\cdot \ell_2+\cdots+m\ell_n=m}}\prod_{1\le h\le m}\frac{1}{\ell_h!}\left(\frac{[y^h]{\rm L}_{\frak{d}}(y)}{1-\beta^h}\right)^{\ell_h},
\end{equation}
for each $m\in\bn_0$, where we notice that ${\rm L}_{\frak{d}}(y)=\log\big(\sum_{\delta\in \frak{d}}e^{-\delta y}\big)$. We conclude above as the following lemma:
\begin{lemma}\label{lem32}We have
$${\rm Z}_{\frak{b,d}}\left(e^{-t}\right)=t^{-\log_\beta |\frak{d}|}e^{P_{\beta, \frak{d}}\left(\log_\beta t\right)}\sum_{m\ge 0}c_{\beta,\frak{d}}(m)t^m,$$
for all $0<t\le \beta^{-\rho_{\beta,\frak{d}}}$, where $c_{\beta,\frak{d}}(m)$ is defined by \eqref{eq2'}.
\end{lemma}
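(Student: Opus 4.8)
The plan is to assemble the lemma from material already in place: the product representation \eqref{eqm1}, the closed form of Proposition \ref{prop1}, and the power-series computation that precedes \eqref{eq2'}. With $\frak{b}=(\beta^k)_{k\ge 0}$ as throughout this subsection, \eqref{eqm1} reads ${\rm Z}_{\frak{b,d}}(e^{-t})=\prod_{k\ge 0}\sum_{\delta\in\frak{d}}e^{-t\delta\beta^k}$, and the whole task is bookkeeping around Proposition \ref{prop1}.

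First I would take logarithms in \eqref{eqm1}. Since $0\in\frak{d}$ and $\beta>1$, each factor $\sum_{\delta\in\frak{d}}e^{-t\delta\beta^k}$ tends to $|\frak{d}|$ as $k\to\infty$, the infinite product converges for $t>0$, and $\log{\rm Z}_{\frak{b,d}}(e^{-t})=\sum_{k\ge 0}{\rm L}_{\frak{d}}(\beta^k t)$ with ${\rm L}_{\frak{d}}$ as in \eqref{eq1}. Feeding this into Proposition \ref{prop1} and exponentiating gives
\[
{\rm Z}_{\frak{b,d}}(e^{-t})=t^{-\log_\beta|\frak{d}|}\,e^{P_{\beta,\frak{d}}(\log_\beta t)}\exp\!\Big(\sum_{k\ge 1}\big(\log|\frak{d}|-{\rm L}_{\frak{d}}(\beta^{-k}t)\big)\Big),
\]
i.e. the displayed rewriting of Proposition \ref{prop1}. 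It then remains to identify the last exponential with the power series $\sum_{m\ge 0}c_{\beta,\frak{d}}(m)t^m$ and to record the admissible range of $t$.

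Next I would use that $\frak{d}$ is finite, so $y\mapsto\sum_{\delta\in\frak{d}}e^{-\delta y}$ is entire and equals $|\frak{d}|\neq 0$ at $y=0$; hence ${\rm L}_{\frak{d}}(y)=\log|\frak{d}|+\sum_{h\ge 1}\big([y^h]{\rm L}_{\frak{d}}(y)\big)y^h$ is holomorphic on a disc $|y|<R_{\frak{d}}$ and $\log|\frak{d}|-{\rm L}_{\frak{d}}(\beta^{-k}t)=O_{\frak{d}}(\beta^{-k}t)$. Consequently the series $\sum_{k\ge 1}\big(\log|\frak{d}|-{\rm L}_{\frak{d}}(\beta^{-k}t)\big)$ converges geometrically and defines a holomorphic function of $t$ on some disc $|t|<\sigma_{\beta,\frak{d}}$ on which the exponential above is nonvanishing and analytic; this is the $\sigma_{\beta,\frak{d}}$ appearing just before \eqref{eqpr1}, and \eqref{eqpr1} then forces $\beta^{-\rho_{\beta,\frak{d}}}<\sigma_{\beta,\frak{d}}$, so the asserted range $0<t\le\beta^{-\rho_{\beta,\frak{d}}}$ lies strictly inside the disc of convergence. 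Inserting the Maclaurin expansion of ${\rm L}_{\frak{d}}$ and noting $\sum_{k\ge 1}\sum_{h\ge 1}\big|[y^h]{\rm L}_{\frak{d}}(y)\big|\beta^{-hk}|t|^h<\infty$ for $|t|<\sigma_{\beta,\frak{d}}$, I would interchange the sums over $k$ and $h$ and use $\sum_{k\ge 1}\beta^{-hk}=(\beta^h-1)^{-1}$ to get $\sum_{k\ge 1}\big(\log|\frak{d}|-{\rm L}_{\frak{d}}(\beta^{-k}t)\big)=\sum_{h\ge 1}\frac{[y^h]{\rm L}_{\frak{d}}(y)}{1-\beta^h}t^h$. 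Exponentiating, factoring $\exp$ of the sum into the product over $h$, expanding each $\exp\big(t^h[y^h]{\rm L}_{\frak{d}}(y)/(1-\beta^h)\big)$ via $e^x=\sum_n x^n/n!$, and collecting the coefficient of $t^m$ yields precisely the formula \eqref{eq2'} for $c_{\beta,\frak{d}}(m)$, which completes the proof.

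The manipulations are essentially formal power-series bookkeeping; the one genuine point requiring care is the analytic justification — showing that $\prod_{k\ge 1}|\frak{d}|/\sum_{\delta\in\frak{d}}e^{-\delta\beta^{-k}t}$ has a positive radius of convergence $\sigma_{\beta,\frak{d}}$ and that the double series over $k$ and $h$ may legitimately be reordered — both of which follow from the holomorphy of ${\rm L}_{\frak{d}}$ at the origin combined with the geometric decay of $\beta^{-k}$.
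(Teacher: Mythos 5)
Your proposal is correct and matches the paper's own argument, which likewise exponentiates Proposition \ref{prop1}, identifies the correction factor $\prod_{k\ge 1}|\frak{d}|/\sum_{\delta\in\frak{d}}e^{-\delta\beta^{-k}t}$ with the power series $\sum_{m\ge 0}c_{\beta,\frak{d}}(m)t^m$, and derives \eqref{eq2'} by the same double-sum interchange and $e^x=\sum_n x^n/n!$ expansion. You supply slightly more explicit analytic justification (holomorphy at the origin, geometric decay, and $\beta^{-\rho_{\beta,\frak{d}}}<\sigma_{\beta,\frak{d}}$) than the paper's more informal discussion, but the route is the same.
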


We also have the following lemma for a more general base $\frak{b}$.
\begin{lemma}\label{lem34}For the case when $b_k=\alpha\beta^{k}+O(\beta^{(1-\gamma)k})$ with and fixed $\gamma>0$, we have
$${\rm Z}_{\frak{b,d}}\left(e^{-t/\alpha}\right)=e^{P_{\beta, \frak{d}}\left(\log_\beta t\right)}t^{-\log_\beta |\frak{d}|}+t^{-\log_\beta |\frak{d}|+\min(1,\gamma)}B_{\frak{b,d}}(t),$$
where $B_{\frak{b,d}}(t)$ is a bound function on $0\le t\le 1$.
\end{lemma}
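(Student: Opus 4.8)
The plan is to compare $\log {\rm Z}_{\frak{b,d}}(e^{-t/\alpha})$ with the sum $\sum_{k\ge 0}{\rm L}_{\frak{d}}(\beta^k t)$ that was handled in Proposition~\ref{prop1}. By definition, $\log {\rm Z}_{\frak{b,d}}(e^{-t/\alpha})=\sum_{k\ge 0}{\rm L}_{\frak{d}}(b_k t/\alpha)$, so the difference from the reference sum is $\sum_{k\ge 0}\big({\rm L}_{\frak{d}}(b_k t/\alpha)-{\rm L}_{\frak{d}}(\beta^k t)\big)$. First I would use the hypothesis $b_k/\alpha=\beta^k+O(\beta^{(1-\gamma)k})$, together with the mean-value bound $|{\rm L}_{\frak{d}}(u)-{\rm L}_{\frak{d}}(v)|\le |u-v|\sup_{[\min,\max]}|{\rm L}_{\frak{d}}'|$ and the fact that ${\rm L}_{\frak{d}}'$ decays exponentially (since ${\rm L}_{\frak{d}}(y)=\log\sum_{\delta\in\frak{d}}e^{-\delta y}$ and its derivative is $-\big(\sum_\delta \delta e^{-\delta y}\big)/\big(\sum_\delta e^{-\delta y}\big)$, which is $O(ye^{-cy})$ near $y=\infty$ if $0\in\frak{d}$, and is bounded near $y=0$), to show that for each fixed $t\in(0,1]$ this difference telescopes into something of size $O(t^{\min(1,\gamma)})$ times a bounded function of $t$. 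Concretely, split the sum over $k$ at $k_0$ where $\beta^{k_0}t\asymp 1$: for small $k$ the terms are $O(|b_k/\alpha-\beta^k|\,t)=O(\beta^{(1-\gamma)k}t)$, summing to $O(t\cdot\max(1,(\beta^{(1-\gamma)k_0}))=O(t^{\min(1,\gamma)})$ after using $\beta^{k_0}\asymp 1/t$; for large $k$ both ${\rm L}_{\frak{d}}$ values are exponentially close to $\log|\frak{d}|$, so the tail is negligible.

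Next I would invoke Proposition~\ref{prop1} in the exponentiated form already displayed in the excerpt, namely ${\rm Z}_{\frak{b,d}}(e^{-t})=t^{-\log_\beta|\frak{d}|}e^{P_{\beta,\frak{d}}(\log_\beta t)}\prod_{k\ge 1}|\frak{d}|/\big(\sum_{\delta\in\frak{d}}e^{-\delta\beta^{-k}t}\big)$, and note that the infinite product equals $1+O(t)$ as $t\to 0$ since each factor is $1+O(\beta^{-k}t)$. Combining the two estimates, $\log{\rm Z}_{\frak{b,d}}(e^{-t/\alpha})=-\log_\beta|\frak{d}|\cdot\log t+P_{\beta,\frak{d}}(\log_\beta t)+O(t^{\min(1,\gamma)})$ uniformly on $(0,1]$; exponentiating and using $e^{O(t^{\min(1,\gamma)})}=1+O(t^{\min(1,\gamma)})$ (valid since the error is bounded on $(0,1]$, which also requires checking that $P_{\beta,\frak{d}}$ is bounded — it is, being smooth and periodic) gives
\begin{align*}
{\rm Z}_{\frak{b,d}}(e^{-t/\alpha})=t^{-\log_\beta|\frak{d}|}e^{P_{\beta,\frak{d}}(\log_\beta t)}\big(1+O(t^{\min(1,\gamma)})\big),
\end{align*}
which is exactly the claimed form with $B_{\frak{b,d}}(t):=t^{\log_\beta|\frak{d}|-\min(1,\gamma)}\big({\rm Z}_{\frak{b,d}}(e^{-t/\alpha})-t^{-\log_\beta|\frak{d}|}e^{P_{\beta,\frak{d}}(\log_\beta t)}\big)$, bounded on $(0,1]$ by construction, and bounded at $t=0$ by the estimate just derived.

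The main obstacle I anticipate is making the error term genuinely \emph{uniform} in $t$ on all of $(0,1]$ rather than just pointwise or near $t=0$: the split at $k_0\asymp\log_\beta(1/t)$ must be tracked carefully so that the implied constants do not blow up as $t\to 0$, and one must control ${\rm L}_{\frak{d}}'$ uniformly on the relevant ranges (it is continuous on $[0,\infty)$, tends to $0$ at $\infty$, hence is globally bounded, but one needs this bound explicitly to bound the small-$k$ contribution). A secondary subtlety is the case $\gamma\ge 1$, where $\min(1,\gamma)=1$ and the $O(\beta^{(1-\gamma)k}t)$ terms are summable in $k$ without needing the cutoff, so the argument simplifies but should be stated so as to cover $\gamma=1$ (where $\beta^{(1-\gamma)k}=1$ and one picks up a harmless $\log(1/t)$ that is absorbed into $t^{\min(1,\gamma)-\epsilon}$, or one keeps $\min(1,\gamma)$ and notes $t\log(1/t)=O(t^{1-\epsilon})$ — actually the cleanest fix is to absorb the log by noting the statement only claims boundedness of $B_{\frak{b,d}}$, and $t\log(1/t)$ is bounded on $(0,1]$, so no loss occurs). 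Once uniformity is secured, the rest is the routine exponentiation bookkeeping indicated above.
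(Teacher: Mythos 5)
Your approach is essentially the same as the paper's: write $b_k/\alpha=\beta^k+\theta_k\beta^{(1-\gamma)k}$, control $\sum_k\bigl({\rm L}_{\frak d}(b_kt/\alpha)-{\rm L}_{\frak d}(\beta^kt)\bigr)$ by the mean-value theorem together with the uniform bound $|{\rm L}_{\frak d}'(y)|\ll e^{-y}$, split the $k$-sum at $k_0\asymp\log_\beta(1/t)$, and then feed the model sum $\sum_k{\rm L}_{\frak d}(\beta^kt)$ into the already-established expansion (your Proposition~\ref{prop1} in exponentiated form, which is exactly Lemma~\ref{lem32}). The paper does precisely this, writing the difference as $\sum_k\int_0^1 t\theta_k\beta^{(1-\gamma)k}{\rm L}_{\frak d}'(\cdot)\,dv$ and then estimating in two ranges.

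One small caution: the subtlety you flag at $\gamma=1$ is real, but your proposed "cleanest fix" does not actually close it. Near the cutoff the small-$k$ sum contributes $t\log(1/t)$, and after extracting the factor $t^{\min(1,\gamma)}=t$ prescribed by the lemma you are left with $B_{\frak{b,d}}(t)=O(\log(1/t))$, which is \emph{not} bounded on $(0,1]$; absorbing $t\log(1/t)$ as "bounded" doesn't help once the $t$ has already been peeled off. The same slip is latent in the paper's final line $\ll t^\gamma$, which is only valid for $\gamma<1$. This is an imprecision in the lemma's statement rather than a flaw in the argument, and it is harmless where the lemma is used (Theorem~\ref{th35}, where the extra logarithm does not affect the abscissa of convergence of $\int_0^1 B_{\frak{b,d}}(t)t^{s-1-\log_\beta|\frak d|+\gamma}\,dt$); a clean fix is to state the remainder as $O\bigl(t^{\min(1,\gamma)-\varepsilon}\bigr)$ for every $\varepsilon>0$, or to exclude $\gamma=1$ and note separately that $\gamma=1$ implies the $\gamma'<1$ hypothesis for every $\gamma'<1$. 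Also, a very minor point: ${\rm L}_{\frak d}'(y)$ is $O(e^{-cy})$, not $O(ye^{-cy})$, as $y\to\infty$ (you wrote the weaker bound, which still suffices).
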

\begin{proof}
We may write
$$b_k=\alpha\beta^{k}+\alpha\theta_k\beta^{(1-\gamma)k},$$
with some bounded sequence $(\theta_k)_{k\ge 0}$ of real numbers.
We compute that
\begin{align*}
\log {\rm Z}_{\frak{b,d}}\left(e^{-t/\alpha}\right)&=\sum_{k\ge 0}{\rm L}_{\frak{d}}(t(\beta^{k}+\theta_k\beta^{(1-\gamma)k}))\\
&=\sum_{k\ge 0}{\rm L}_{\frak{d}}(t\beta^{k})+\sum_{k\ge 0}\int_{0}^{1}t\theta_k \beta^{(1-\gamma)k}{\rm L}_{\frak{d}}'(\beta^{k}t+vt\theta_k \beta^{(1-\gamma)k})\rd v.
\end{align*}
The asymptotics of $\sum_{k\ge 0}{\rm L}_{\frak{d}}(t\beta^{k})$ will follows from Lemma \ref{lem32}. Note that
$$|{\rm L}_{\frak{d}}'(y)|=\frac{\sum_{\delta\in \frak{d}}\delta e^{-\delta y}}{\sum_{\delta\in \frak{d}}e^{-\delta y}}\le  e^{-y} (|\frak{d}|-1)\max_{\delta\in \frak{d}}\delta,$$
for all $y\ge 0$, we have
\begin{align*}
\sum_{k\ge 0}\int_{0}^{1}t\theta_k &\beta^{(1-\gamma)k}{\rm L}_{\frak{d}}'(t\beta^{k}+v\theta_k\beta^{(1-\gamma)k})\rd v\\
&\ll t\sum_{k\ge 0}\int_{0}^{1}\beta^{(1-\gamma)k}\exp\left(-t\beta^{k}-vt\theta_k \beta^{(1-\gamma)k}\right)\rd v\\
&\ll t^\gamma\sum_{k\ge 0}(t \beta^{k})^{1-\gamma}\exp\left(-t\beta^{k}\right)\\
&\le t^\gamma\sum_{0\le k\le \log_\beta(1/t)}(t\beta^{k})^{1-\gamma}+(\lfloor|\gamma|\rfloor+3)! t^\gamma\sum_{k\ge \log_\beta(1/ t)}(t\beta^{k})^{1-\gamma-(\lfloor|\gamma|\rfloor+3)}\\
&\ll t^\gamma(t\beta^{\log_\beta(1/t)})^{1-\gamma}+t^\gamma (t\beta^{\log_\beta(1/t)})^{-1}\ll t^\gamma,
\end{align*}
for any given $t\in(0,1]$. This complete the proof of the lemma.
\end{proof}

We now give the proof of Proposition \ref{pro36}.
\begin{proof}[Proof of Proposition \ref{pro36}]From Proposition \ref{lemm20}, we see that the relative density function $\Psi_{\beta,\frak{d}}(x)\in {\rm BV}(\bt)\cap{\rm Lip}_{\eta}(\bt)$ for some positive constant $\eta>0$.
Using a theorem of Zygmund \cite[(3.6) Theorem, p.241]{MR1963498} or Katznelson \cite[Theorem (Zygmund), p.33]{MR2039503},
we have $\Psi_{\beta,\frak{d}}(x)$ has an absolutely convergent Fourier series:
$$\Psi_{\beta,\frak{d}}(x)=\sum_{k\in\bz}\widehat{\Psi}_{\beta,\frak{d}}(k)e^{2\pi\ri kx},$$
where $\widehat{\Psi}_{\beta,\frak{d}}(k)=\int_{0}^1\Psi_{\beta,\frak{d}}(x)e^{-2\pi\ri kx}\rd x$.
However, it is quite difficult to find the exact value of $\widehat{\Psi}_{\beta,\frak{d}}(k)$ by this integration.
The following provides an alternative approach.

\medskip

Let $\frak{b}=(\beta^k)_{k\ge 0}$. Integration by parts for a Riemann--Stieltjes integral, we have
\begin{align*}
{\rm Z}_{\frak{b,d}}\left(e^{-t}\right)=\sum_{\lambda\in\mathcal{N}_{\frak{b,d}}}r_{\frak{b,d}}(\lambda)e^{-\lambda t}=t\int_{0}^{\infty}S_{\beta,\frak{d}}(u) e^{-tu}\rd u.
\end{align*}
We split the integral in above as the following:
\begin{align*}
{\rm Z}_{\beta, \frak{d}}\left(e^{-t}\right)-(1-e^{-t})=&\int_{t}^{\infty}S_{\beta,\frak{d}}(u/t) e^{-u}\rd u\\
=&\int_{t}^{\infty}(u/t)^{\log_\beta |\frak{d}|}\left(\frac{S_{\beta,\frak{d}}(u/t)}{(u/t)^{\log_\beta |\frak{d}|}}-\Psi_{\beta,\frak{d}}\left(\log_\beta(u/t)\right)\right) e^{-u}\rd u\\
&+\int_{t}^{\infty}\Psi_{\beta,\frak{d}}\left(\log_\beta(u/t)\right)(u/t)^{\log_\beta |\frak{d}|} e^{-u}\rd u.
\end{align*}
Using Lebesgue's dominated convergence theorem implies
\begin{align*}
\lim_{t\rrw 0^+}\int_{0}^{\infty}u^{\log_\beta |\frak{d}|}&\left|\frac{S_{\beta,\frak{d}}(u/t)}{(u/t)^{\log_\beta |\frak{d}|}}-\Psi_{\beta,\frak{d}}\left(\log_\beta(u/t)\right)\right| e^{-u}{\bf 1}_{u>t}\rd u\\
&=\int_{0}^{\infty}u^{\log_\beta |\frak{d}|}\lim_{t\rrw 0^+}\left|\frac{S_{\beta,\frak{d}}(u/t)}{(u/t)^{\log_\beta |\frak{d}|}}-\Psi_{\beta,\frak{d}}\left(\log_\beta(u/t)\right)\right|{\bf 1}_{u>t} e^{-u}\rd u=0.
\end{align*}
Here we used that
$$\lim_{x\rrw+\infty}\left(x^{-\log_\beta |\frak{d}|}S_{\beta,\frak{d}}(x)-\Psi_{\beta,\frak{d}}(\log_\beta x)\right)=0,$$
which follows from \eqref{eqdas1} of Theorem \ref{th1}.  Therefore, as $t\rrw 0^+$
\begin{align*}
t^{\log_\beta |\frak{d}|}{\rm Z}_{\beta, \frak{d}}\left(e^{-t}\right)=\int_{t}^{\infty}\Psi_{\beta,\frak{d}}\left(\log_\beta(u/t)\right)u^{\log_\beta |\frak{d}|} e^{-u}\rd u+o(1).
\end{align*}
On the other hand, using the periodicity of $\Psi_{\beta,\frak{d}}(\cdot)$, we obtain
\begin{align*}
\int_{t}^{\infty}\Psi_{\beta,\frak{d}}\left(\log_\beta(u/t)\right)u^{\log_\beta |\frak{d}|} e^{-u}\rd u&=(\log\beta)\int_{0}^\infty\Psi_{\beta,\frak{d}}(u)(\beta^{u}t)^{1+\log_\beta |\frak{d}|}e^{-\beta^{u}t}\rd u\\
&=(\log\beta)\int_{0}^1\Psi_{\beta,\frak{d}}(u)\sum_{k\ge 0}(\beta^{k+u}t)^{1+\log_\beta |\frak{d}|}e^{-\beta^{k+u}t}\rd u.
\end{align*}
We now employ the Mellin transform of the Gamma function so that
for any $c>0$,
\begin{align*}
\sum_{k\ge 0}(\beta^{k+u}t)^{1+\log_\beta |\frak{d}|}e^{-\beta^{k+u}t}&=\frac{1}{2\pi\ri}\int_{c+1+\log_\beta |\frak{d}|-\ri \infty}^{c+1+\log_\beta |\frak{d}|+\ri \infty}\sum_{k\ge 0}(\beta^{k+u}t)^{1+\log_\beta |\frak{d}|}\Gamma(s)(\beta^{k+u}t)^{-s}\rd s\\
&=\frac{1}{2\pi\ri}\int_{(c)}\frac{\Gamma(s+1+\log_\beta |\frak{d}|)(\beta^ut)^{-s}}{1-\beta^{-s}}\rd s.
\end{align*}
We can shift the path of the integration above to the the vertical line from $-1-\ri\infty$ to $1+\ri\infty$. The validity of this shifting is easily shown by using Stirling's
formula. The relevant poles of the integrand are at $s=2\pi(\log \beta)^{-1}k\ri,~k\in\bz$.
Counting the residues of those poles, we get
\begin{align*}
\sum_{k\ge 0}(\beta^{k+u}t)^{1+\log_\beta |\frak{d}|}e^{-\beta^{k+u}t}=&\sum_{k\in\bz}\frac{\Gamma(1+\log_\beta |\frak{d}|+2\pi(\log \beta)^{-1}k\ri)e^{-2\pi\ri k(u+\log_\beta t)}}{\log\beta}\\
&+\frac{1}{2\pi\ri}\int_{-1-\ri\infty}^{1+\ri\infty}\frac{\Gamma(s+1+\log_\beta |\frak{d}|)(\beta^ut)^{-s}}{1-\beta^{-s}}\rd s.
\end{align*}
Using Lemma \ref{lem34}, we have $t^{\log_\beta |\frak{d}|}{\rm Z}_{\beta, \frak{d}}\left(e^{-t}\right)=\exp(P_{\beta,\frak{d}}(\log_\beta t))(1+O(t))$.  Therefore, as $t\rrw 0^+$
\begin{align*}
\exp\left(P_{\beta, \frak{d}}(\log_\beta t)\right)=\sum_{k\in\bz}e^{2\pi\ri k(\log_\beta t)}\Gamma\left(1+\log_\beta |\frak{d}|-\frac{2\pi k\ri}{\log \beta}\right)\int_{0}^1\Psi_{\beta,\frak{d}}(u)e^{2\pi\ri ku}\rd u+o(1).
\end{align*}
Let $t=\beta^{-N+w}, N\in\bn$ and $N\rrw+\infty$, we have
\begin{align*}
\exp\left(P_{\beta, \frak{d}}(w)\right)=\sum_{k\in\bz}e^{2\pi\ri k w}\Gamma\left(1+\log_\beta |\frak{d}|-2\pi(\log \beta)^{-1}k\ri\right)\int_{0}^1\Psi_{\beta,\frak{d}}(u)e^{2\pi\ri ku}\rd u.
\end{align*}
From the above we obtain
$$\Gamma\left(1+\log_\beta |\frak{d}|+2\pi(\log \beta)^{-1}k\ri\right)\widehat{\Psi}_{\beta,\frak{d}}(k)=\int_{0}^1e^{P_{\beta, \frak{d}}(w)}e^{2\pi\ri kw}\rd w.$$
Using \eqref{eq2} and letting $w\mapsto w-1/2$, we have
$$\widehat{\Psi}_{\beta,\frak{d}}(k)=\frac{(-1)^k\sqrt{\frak{d}}}{\Gamma\left(1+\log_\beta|\frak{d}|+2\pi (\log \beta)^{-1}k\ri\right)}\rint_{\bt}\prod_{h\in\bz}\left(\frac{\sum_{\delta\in \frak{d}}\exp(-\delta \beta^{h+w-1/2})}{
\sum_{\delta\in \frak{d}}\exp(-\delta \beta^{h+w+1/2})}\right)^{h+w}e^{2\pi\ri k w}\rd w,$$
which completes the proof of the proposition.
\end{proof}

\subsection{Analytic continuation of $\zeta_{\frak{b,d}}(s)$}\label{sec32}

The following Theorems \ref{th34} and \ref{th35} gives a analytic continuation of the zeta function $\zeta_{\frak{b,d}}(s)$ for the base $\frak{b}=(\beta^k)_{k\ge 0}$ and the case when $b_k=\beta^k+O(\beta^{(1-\gamma)k})$ with a constant $\gamma>0$, respectively. Our Theorem \ref{th3} will follows from Theorems \ref{th34}, \ref{th35} and Proposition \ref{pro36}. In what follows, we need the upper incomplete gamma function, defined by
$$\Gamma(s,w):=\int_{w}^{\infty}e^{-u}u^{s-1}\rd u.$$
In particular, $\Gamma(s)=\Gamma(s,0)$ is the usual Gamma function. It is clear that $\Gamma(s, w)$ is an entire function of $s$, when $w>0$.
\begin{theorem}\label{th34}Let $\frak{b}=(\beta^k)_{k\ge 0}$ and let $c\ge \rho_{\beta,\frak{d}}$ be any integer, where $\rho_{\beta, \frak{d}}$ is defined by \eqref{eqpr1}. Then the function $\zeta_{\frak{b,d}}(s)$ can be meromorphically continued to
the whole $s$-plane and is expressed as
\begin{align}\label{eqzebme}
\zeta_{\frak{b,d}}(s)=
&\frac{1}{\Gamma(s)}\sum_{\lambda>0}\frac{r_{\frak{b,d}}(\lambda)}{\lambda^s}\Gamma\left(s,\lambda\beta^{-c}\right)-\frac{\beta^{-cs}}{\Gamma(s+1)}\nonumber\\
&+\frac{1}{\Gamma(s)}\sum_{\ell\ge 0}\frac{\beta^{-(s+\ell-\log_\beta |\frak{d}|)c}\log\beta}{\beta^{s+\ell-\log_\beta |\frak{d}|}-1}
c_{\beta,\frak{d}}(\ell)\int_{0}^{1}e^{P_{\beta, \frak{d}}\left(u\right)}\beta^{(s+\ell-\log_\beta |\frak{d}|) u}\rd u,
\end{align}
which is holomorphic except for the possible simple poles at
$$\log_\beta |\frak{d}|-j-2\pi(\log \beta)^{-1}k\ri \;\; (j\in\bn_0, k\in\bz).$$
In particular, the reside at $s=\log_\beta |\frak{d}|-j-2\pi(\log \beta)^{-1}k\ri$ has the following expression:
\begin{align*}
\res_{s=\log_\beta |\frak{d}|-j-\frac{2\pi k\ri}{\log\beta}}\zeta_{\frak{b,d}}(s)=\frac{c_{\beta,\frak{d}}(j)}{ \Gamma(\log_\beta |\frak{d}|-j-2\pi(\log \beta)^{-1}k\ri)}\int_{0}^{1}e^{P_{\beta, \frak{d}}\left(u\right)}e^{-2\pi\ri k u}\rd u.
\end{align*}
Here it is understand that $1/\Gamma(-n)=0$ for all $n\in\bn_0$. Moreover, if $\log_\beta |\frak{d}|\not\in\bn$ then $\zeta_{\frak{b,d}}(0)=-1$ and $\zeta_{\frak{b,d}}(-n)=0$ for all $n\in\bn$. If $\log_\beta |\frak{d}|\in\bn$ then for each $n\in\bn_0$ one has
\begin{align*}
\zeta_{\frak{b,d}}(-n)=(-1)^nn!c_{\beta,\frak{d}}(n+\log_\beta |\frak{d}|)\int_{0}^1e^{P_{\beta, \frak{d}}\left(u\right)}\rd u-{\bf 1}_{n=0}.
\end{align*}
\end{theorem}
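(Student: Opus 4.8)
The plan is to realize $\zeta_{\frak{b,d}}(s)$ as a Mellin transform of the generating function ${\rm Z}_{\frak{b,d}}(e^{-t})$ and feed in the local expansion of Lemma \ref{lem32}. Since $r_{\frak{b,d}}(0)=1$ and the abscissa of convergence of $\zeta_{\frak{b,d}}$ is $\sigma_c=\log_\beta|\frak{d}|$, for $\re(s)>\log_\beta|\frak{d}|$ Tonelli's theorem together with $\int_0^\infty e^{-\lambda t}t^{s-1}\rd t=\Gamma(s)\lambda^{-s}$ gives
\[
\Gamma(s)\zeta_{\frak{b,d}}(s)=\int_0^\infty\bigl({\rm Z}_{\frak{b,d}}(e^{-t})-1\bigr)t^{s-1}\rd t,
\]
the integral converging at $0$ exactly because ${\rm Z}_{\frak{b,d}}(e^{-t})-1\asymp t^{-\log_\beta|\frak{d}|}$ as $t\rrw0^+$ by Lemma \ref{lem32}, and at $\infty$ trivially. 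I would then split the integral at $t=\beta^{-c}$. On $(\beta^{-c},\infty)$, interchanging sum and integral and substituting $u=\lambda t$ turns the contribution into $\sum_{\lambda>0}r_{\frak{b,d}}(\lambda)\lambda^{-s}\Gamma(s,\lambda\beta^{-c})$. On $(0,\beta^{-c})$, since $c\ge\rho_{\beta,\frak{d}}$ forces $\beta^{-c}\le\beta^{-\rho_{\beta,\frak{d}}}$ (inside the region where Lemma \ref{lem32} applies), I substitute ${\rm Z}_{\frak{b,d}}(e^{-t})=t^{-\log_\beta|\frak{d}|}e^{P_{\beta,\frak{d}}(\log_\beta t)}\sum_m c_{\beta,\frak{d}}(m)t^m$, peel off $\int_0^{\beta^{-c}}t^{s-1}\rd t=\beta^{-cs}/s$ coming from the $-1$, integrate the power series term by term, and in the $m$-th term substitute $t=\beta^{v}$ and use $1$-periodicity of $P_{\beta,\frak{d}}$ to fold $\int_{-\infty}^{-c}$ into $\sum_{n\ge1}\beta^{-(s+m-\log_\beta|\frak{d}|)(c+n)}$ times $\int_0^1 e^{P_{\beta,\frak{d}}(u)}\beta^{(s+m-\log_\beta|\frak{d}|)u}\rd u$. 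Summing the geometric series and dividing by $\Gamma(s)$, using $1/(s\Gamma(s))=1/\Gamma(s+1)$, produces exactly \eqref{eqzebme} on $\re(s)>\log_\beta|\frak{d}|$.

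Next I would check that the right side of \eqref{eqzebme} is meromorphic on all of $\bc$, which then \emph{is} the continuation by uniqueness. The incomplete-gamma series is entire: $\Gamma(s,w)$ is entire in $s$ for $w>0$, while $r_{\frak{b,d}}(\lambda)\ll\lambda^{O(1)}$ (by \eqref{eqds1} of Theorem \ref{th1}, or trivially) and $\Gamma(s,\lambda\beta^{-c})\ll_s e^{-\lambda\beta^{-c}}\lambda^{O(1)}$, so the series converges locally uniformly in $s$; the term $-\beta^{-cs}/\Gamma(s+1)$ is entire. For the $\ell$-sum, the integral $\int_0^1 e^{P_{\beta,\frak{d}}(u)}\beta^{(s+\ell-\log_\beta|\frak{d}|)u}\rd u$ is entire in $s$ and, for $\re(s)$ in a bounded set and $\ell$ large, of size $\ll\beta^{\re(s)+\ell-\log_\beta|\frak{d}|}$, which against $|\beta^{s+\ell-\log_\beta|\frak{d}|}-1|^{-1}\ll\beta^{-(\re(s)+\ell-\log_\beta|\frak{d}|)}$ is $O(1)$; combined with $|\beta^{-(s+\ell-\log_\beta|\frak{d}|)c}|\ll_s\beta^{-\ell c}$ and $|c_{\beta,\frak{d}}(\ell)|\ll_\varepsilon(\sigma_{\beta,\frak{d}}-\varepsilon)^{-\ell}$ (radius of convergence $\ge\sigma_{\beta,\frak{d}}$), the $\ell$-th term is $\ll_{s,\varepsilon}(\beta^{c}(\sigma_{\beta,\frak{d}}-\varepsilon))^{-\ell}$, and $\beta^{c}\ge\beta^{\rho_{\beta,\frak{d}}}>\sigma_{\beta,\frak{d}}^{-1}$ makes this a convergent geometric bound. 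Hence the $\ell$-sum is meromorphic with at worst simple poles where $\beta^{s+\ell-\log_\beta|\frak{d}|}=1$, i.e.\ at $\log_\beta|\frak{d}|-\ell-2\pi(\log\beta)^{-1}k\ri$; multiplying by $1/\Gamma(s)$ deletes any of these lying in $\{0,-1,-2,\dots\}$, leaving precisely the list $p_{\beta,\frak{d}}(j,k)$ ($j\in\bn_0,k\in\bz$), all simple.

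For the residue at $s=p_{\beta,\frak{d}}(j,k)$ only the $\ell=j$ summand is singular; writing $\sigma=s+j-\log_\beta|\frak{d}|\rrw-2\pi(\log\beta)^{-1}k\ri$ one gets $\res\bigl((\log\beta)\beta^{-\sigma c}/(\beta^\sigma-1)\bigr)=\beta^{-\sigma c}\big|_{\beta^\sigma=1}=e^{2\pi\ri kc}=1$, using $c\in\bz$ — the one cancellation worth watching — so the residue equals $c_{\beta,\frak{d}}(j)\,\Gamma(p_{\beta,\frak{d}}(j,k))^{-1}\int_0^1 e^{P_{\beta,\frak{d}}(u)}e^{-2\pi\ri ku}\rd u$, the convention $1/\Gamma(-n)=0$ recording where there is in fact no pole. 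Finally, at $s=-n$ with $n\in\bn_0$ every term carries a factor $1/\Gamma(s)$ that vanishes there, except $-\beta^{-cs}/\Gamma(s+1)=-\beta^{cn}/\Gamma(1-n)=-{\bf 1}_{n=0}$ and, when $\log_\beta|\frak{d}|\in\bn$, the $\ell=n+\log_\beta|\frak{d}|$ summand, which is simultaneously singular; a short limiting computation (L'H\^opital, using $1/\Gamma(s)=(-1)^nn!(s+n)+O((s+n)^2)$ near $-n$ and $(\log\beta)/(\beta^{s+n}-1)\sim(s+n)^{-1}$) evaluates its limit as $(-1)^nn!\,c_{\beta,\frak{d}}(n+\log_\beta|\frak{d}|)\int_0^1 e^{P_{\beta,\frak{d}}(u)}\rd u$, yielding the stated formula, while if $\log_\beta|\frak{d}|\notin\bn$ no $\ell$-summand is singular at $-n$ and $\zeta_{\frak{b,d}}(-n)=-{\bf 1}_{n=0}$. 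The main obstacle I anticipate is the convergence bookkeeping for the $\ell$-sum — matching the growth of $c_{\beta,\frak{d}}(\ell)$ against the $u$-integral and the $\beta^\sigma-1$ denominator so that the series is locally uniformly convergent away from its poles — and this is exactly what the hypothesis $c\ge\rho_{\beta,\frak{d}}$ secures; everything else is routine residue calculus.
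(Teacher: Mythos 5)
Your proof is correct and follows essentially the same route as the paper: express $\Gamma(s)\zeta_{\frak{b,d}}(s)$ as the Mellin transform of ${\rm Z}_{\frak{b,d}}(e^{-t})-1$, split at $t=\beta^{-c}$, use Lemma~\ref{lem32} together with the $1$-periodicity of $P_{\beta,\frak{d}}$ to fold the $(0,\beta^{-c})$ integral into the geometric $\ell$-series, and verify that $c\ge\rho_{\beta,\frak{d}}$ makes this series locally uniformly convergent. The only difference is one of exposition: you spell out the residue computation and the cancellation $e^{2\pi\ri kc}=1$ (using $c\in\bz$), which the paper dismisses as ``an easy exercise''; otherwise the two arguments coincide line for line.
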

\begin{proof}
Note that
\begin{align*}
\int_{0}^{\infty}\left({\rm Z}_{\frak{b,d}}\left(e^{-t}\right)-1\right)t^{s-1}\,dt&=\int_{0}^{\infty}\sum_{\lambda>0}r_{\frak{b,d}}(\lambda)e^{-\lambda t}t^{s-1}\,dt=\Gamma(s)\zeta_{\frak{b,d}}(s).
\end{align*}
Thus for any $c\ge 0$ we have
\begin{align*}
\zeta_{\frak{b,d}}(s)&=\frac{1}{\Gamma(s)}\int_{0}^{\infty}\widehat{{\rm Z}}_{\frak{b,d}}\left(e^{-t}\right)t^{s-1}\,dt\\
&=\frac{1}{\Gamma(s)}\int_{0}^{\beta^{-c}}\left({\rm Z}_{\frak{b,d}}\left(e^{-t}\right)-1\right)t^{s-1}\,dt+\frac{1}{\Gamma(s)}\int_{\beta^{-c}}^{\infty}\sum_{\lambda>0 }r_{\frak{b,d}}(\lambda)e^{-\lambda t}t^{s-1}\,dt.
\end{align*}
Therefore, using the definition of the upper incomplete gamma function, we have
\begin{align}\label{eq000}
\zeta_{\frak{b,d}}(s)=\frac{1}{\Gamma(s)}\int_{0}^{\beta^{-c}}{\rm Z}_{\frak{b,d}}\left(e^{-t}\right)t^{s-1}\,dt-\frac{\lambda^s}{s\Gamma(s)}
+\frac{1}{\Gamma(s)}\sum_{\lambda>0}\frac{r_{\frak{b,d}}(\lambda)}{\lambda^s}\Gamma(s, \lambda \beta^{-c}).
\end{align}
On the other hand, taking $c\ge \rho_{\beta,\frak{d}}$ be a any given integer, using Lemma \ref{lem32}, for any $\Re(s)>\log_\beta |\frak{d}|$ we have
\begin{align}\label{eqppd}
\int_{0}^{\beta^{-c}}{\rm Z}_{\frak{b,d}}\left(e^{-t}\right)t^{s-1}\,dt&=\int_{0}^{\beta^{-c}}e^{P_{\beta, \frak{d}}\left(\log_\beta t\right)}\sum_{\ell\ge 0}c_{\beta,\frak{d}}(\ell)t^{s+\ell-1-\log_\beta |\frak{d}|}\,dt\nonumber\\
&=\sum_{\ell\ge 0}c_{\beta,\frak{d}}(\ell)\int_{0}^{\beta^{-c}}e^{P_{\beta, \frak{d}}\left(\log_\beta t\right)}t^{s+\ell-1-\log_\beta |\frak{d}|}\rd t.
\end{align}
Using the periodic fact of the function $P_{\beta,\frak{d}}(\cdot)$ we find that
\begin{align*}
\int_{0}^{\beta^{-c}}e^{P_{\beta, \frak{d}}\left(\log_\beta t\right)}t^{s}\rd t&=\sum_{k> c}\int_{\beta^{-k}}^{\beta^{1-k}}e^{P_{\beta, \frak{d}}\left(\log_\beta t\right)}t^{s-1}\rd t\\
&=\sum_{k> c}\beta^{-ks}\int_{1}^{\beta}e^{P_{\beta, \frak{d}}\left(\log_\beta t\right)}t^{s-1}\rd t\\
&=\frac{\beta^{-cs}}{\beta^s-1}\int_{1}^{\beta}e^{P_{\beta, \frak{d}}\left(\log_\beta t\right)}t^{s-1}\rd t=\frac{\beta^{-cs}\log\beta}{\beta^s-1}\int_{0}^{1}e^{P_{\beta, \frak{d}}\left(u\right)}\beta^{s u}\rd u.
\end{align*}
Using above in \eqref{eqppd}, we obtain
\begin{align*}
\int_{0}^{\beta^{-c}}{\rm Z}_{\frak{b,d}}\left(e^{-t}\right)t^{s-1}\,dt=\sum_{\ell\ge 0}
\frac{\beta^{-(s+\ell-\log_\beta |\frak{d}|)c}\log\beta}{\beta^{s+\ell-\log_\beta |\frak{d}|}-1}c_{\beta,\frak{d}}(\ell)\int_{0}^{1}e^{P_{\beta, \frak{d}}\left(u\right)}\beta^{(s+\ell-\log_\beta |\frak{d}|) u}\rd u.
\end{align*}
Combining with \eqref{eq000} we establish \eqref{eqzebme} formally.  On the other hand, denoting $\sigma=\Re(s)$ then for any $N\ge 1-\sigma+\log_\beta |\frak{d}|$,
\begin{align*}
\sum_{\ell\ge N}&\left|\frac{\beta^{-(s+\ell-\log_\beta |\frak{d}|)c}}{\beta^{s+\ell-\log_\beta |\frak{d}|}-1}c_{\beta,\frak{d}}(\ell)
\int_{0}^{1}e^{P_{\beta, \frak{d}}\left(u\right)}\beta^{(s+\ell-\log_\beta |\frak{d}|) u}\rd u\right|\\
&\qquad\ll \sum_{\ell\ge N}|c_{\beta,\frak{d}}(\ell)|\beta^{-(\ell-\log_\beta |\frak{d}|+\sigma)c}\int_{0}^{1}e^{P_{\beta, \frak{d}}\left(u\right)}e^{-(\sigma+\ell-\log_\beta |\frak{d}|)(1-u)}\rd u\\
&\qquad \ll \sum_{\ell\ge N}|c_{\beta,\frak{d}}(\ell)|\beta^{-\rho_{\beta, \frak{d}}\ell}<+\infty.
\end{align*}
This bound is uniform, when $s$ varies over a compact subsets of $\bc$. Thus, the series in \eqref{eqzebme}
converges uniformly and absolutely on compact subsets of $\bc$ without containing any of the poles of
the functions
$$\frac{\beta^{-(s+\ell-\log_\beta |\frak{d}|)c}\log\beta}{\Gamma(s)(\beta^{s+\ell-\log_\beta |\frak{d}|}-1)}c_{\beta,\frak{d}}(\ell)
\int_{0}^{1}e^{P_{\beta, \frak{d}}\left(u\right)}\beta^{(s+\ell-\log_\beta |\frak{d}|) u}\rd u.$$
Calculating its residue is an easy exercise, which we will omit here. Finally, using the well-known fact that $\lim_{s\rrw -n}1/\Gamma(s)=0$ and $\Gamma(s+n+1)=\Gamma(s)\prod_{0\le j\le n}(s+j)$ for all $n\in\bn_0$, we have
\begin{align*}
\zeta_{\frak{b,d}}(-n)&=-{\bf 1}_{n=0}+\lim_{s\rrw -n}\frac{\prod_{0\le j\le n}(s+j)}{\Gamma(s+n+1)}\frac{\beta^{-(s+n)c}\log\beta}{\beta^{s+n}-1}c_{\beta,\frak{d}}(n+\ell)\int_{0}^{1}e^{P_{\beta, \frak{d}}\left(u\right)}\beta^{(s+n) u}\rd u\\
&=-{\bf 1}_{n=0}+(-1)^nn!c_{\beta,\frak{d}}(n+\ell)\int_{0}^{1}e^{P_{\beta, \frak{d}}\left(u\right)}\rd u,
\end{align*}
which completes the proof of the theorem.
\end{proof}
\begin{theorem}\label{th35}For the case when $b_k=\beta^k+O(\beta^{(1-\gamma)k})$ with any fixed $\gamma\in(0,1]$, the function $\zeta_{\frak{b,d}}(s)$ can be meromorphically continued to the half complex plane $\Re(s)>\log_\beta |\frak{d}|-\gamma$ and is expressed as
\begin{align*}
\zeta_{\frak{b,d}}(s)=-\frac{1}{\Gamma(s+1)}+\frac{1}{\Gamma(s)}\frac{\log\beta}{\beta^{s-\log_\beta |\frak{d}|}-1}
\int_{0}^{1}e^{P_{\beta, \frak{d}}\left(u\right)}\beta^{(s-\log_\beta |\frak{d}|) u}\rd u+\frac{h_{\frak{b,d}}(s)}{\Gamma(s)},
\end{align*}
where $h_{\frak{b,d}}(s)$ is some holomorphic function on $\Re(s)>\log_\beta |\frak{d}|-\gamma$.
\end{theorem}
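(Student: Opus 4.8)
The plan is to run the Mellin-transform argument of Theorem~\ref{th34}, but with the exact power-series expansion of Lemma~\ref{lem32} replaced by the weaker two-term expansion of Lemma~\ref{lem34}; this is exactly what forces the continuation to stop at $\re(s)>\log_\beta|\frak{d}|-\gamma$. Start from
$$\Gamma(s)\zeta_{\frak{b,d}}(s)=\int_{0}^{\infty}\left({\rm Z}_{\frak{b,d}}(e^{-t})-1\right)t^{s-1}\,\rd t,$$
valid for $\re(s)>\log_\beta|\frak{d}|$ since that is the abscissa of convergence, and split the integral at $t=1$. On $[1,\infty)$ one has ${\rm Z}_{\frak{b,d}}(e^{-t})-1=\sum_{\lambda>0}r_{\frak{b,d}}(\lambda)e^{-\lambda t}$; because $\mathcal{N}_{\frak{b,d}}^{+}$ is bounded below by the positive constant $(\min_{0\ne\delta\in\frak{d}}\delta)(\inf_{k\ge0}b_k)$ while $r_{\frak{b,d}}(\lambda)$ grows only polynomially (Theorem~\ref{th1}), this tail decays exponentially as $t\rrw+\infty$, so $\int_{1}^{\infty}\left({\rm Z}_{\frak{b,d}}(e^{-t})-1\right)t^{s-1}\,\rd t$ is an entire function of $s$.

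For the piece on $(0,1)$ I would insert the expansion of Lemma~\ref{lem34} with $\alpha=1$ (so that $\min(1,\gamma)=\gamma$ under the hypothesis $\gamma\in(0,1]$),
$${\rm Z}_{\frak{b,d}}(e^{-t})=e^{P_{\beta,\frak{d}}(\log_\beta t)}t^{-\log_\beta|\frak{d}|}+t^{-\log_\beta|\frak{d}|+\gamma}B_{\frak{b,d}}(t),\qquad 0<t\le 1,$$
with $B_{\frak{b,d}}$ bounded on $[0,1]$, together with $-1$. This produces three integrals. The first, $\int_{0}^{1}e^{P_{\beta,\frak{d}}(\log_\beta t)}t^{s-1-\log_\beta|\frak{d}|}\,\rd t$, is handled exactly as in the proof of Theorem~\ref{th34} with $c=0$: decompose $(0,1)=\bigcup_{k\ge 1}(\beta^{-k},\beta^{1-k})$, substitute $t=\beta^{u-k}$ on the $k$-th piece, use the $1$-periodicity of $P_{\beta,\frak{d}}$, and sum the geometric series in $k$ to get $\frac{\log\beta}{\beta^{s-\log_\beta|\frak{d}|}-1}\int_{0}^{1}e^{P_{\beta,\frak{d}}(u)}\beta^{(s-\log_\beta|\frak{d}|)u}\,\rd u$, where the integral is entire in $s$ and the prefactor is meromorphic on all of $\bc$ (its only poles lie on $\re(s)=\log_\beta|\frak{d}|$, hence these are the sole poles inside $\re(s)>\log_\beta|\frak{d}|-\gamma$ when $\gamma\le 1$). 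The second integral, $\int_{0}^{1}B_{\frak{b,d}}(t)t^{s-1-\log_\beta|\frak{d}|+\gamma}\,\rd t$, converges absolutely exactly when $\re(s)>\log_\beta|\frak{d}|-\gamma$, since $|B_{\frak{b,d}}(t)|\le M$ makes the integrand $O\big(t^{\re(s)-1-\log_\beta|\frak{d}|+\gamma}\big)$ near $0$, and differentiation under the integral sign (or Morera's theorem) shows it is holomorphic there. The third, $-\int_{0}^{1}t^{s-1}\,\rd t=-\tfrac1s$, continues meromorphically and contributes $-1/\Gamma(s+1)$ after division by $\Gamma(s)$.

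Collecting the four pieces and dividing by $\Gamma(s)$ yields, for $\re(s)>\log_\beta|\frak{d}|$, the asserted identity with
$$h_{\frak{b,d}}(s)=\int_{0}^{1}B_{\frak{b,d}}(t)t^{s-1-\log_\beta|\frak{d}|+\gamma}\,\rd t+\int_{1}^{\infty}\left({\rm Z}_{\frak{b,d}}(e^{-t})-1\right)t^{s-1}\,\rd t,$$
which by the above is holomorphic on $\re(s)>\log_\beta|\frak{d}|-\gamma$. Since every term on the right-hand side is meromorphic on that half-plane, the identity is the desired meromorphic continuation, and uniqueness of analytic continuation shows it agrees with the original Dirichlet series where the latter converges. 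The only genuinely delicate point is bookkeeping the convergence exponent: one must be sure that Lemma~\ref{lem34} delivers an error term of exact order $t^{-\log_\beta|\frak{d}|+\gamma}$ with a coefficient that is bounded uniformly on the \emph{closed} interval $[0,1]$, because any slippage there would shrink the region of validity; everything else is a routine transcription of the computation already carried out for Theorem~\ref{th34}.
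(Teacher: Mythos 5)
Your proposal is correct and takes essentially the same approach as the paper: split the Mellin integral $\Gamma(s)\zeta_{\frak{b,d}}(s)=\int_0^\infty({\rm Z}_{\frak{b,d}}(e^{-t})-1)t^{s-1}\,\rd t$ at $t=1$, apply Lemma~\ref{lem34} on $(0,1)$, sum the geometric series coming from the $1$-periodicity of $P_{\beta,\frak{d}}$, and absorb the bounded-error piece together with the entire tail into $h_{\frak{b,d}}(s)$. The paper writes the tail as $\sum_{\lambda>0}r_{\frak{b,d}}(\lambda)\Gamma(s,\lambda)\lambda^{-s}$ rather than as your explicit integral $\int_1^\infty$, but these are the same object, and your elaboration of why each piece is holomorphic in the stated half-plane matches what the paper leaves implicit.
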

\begin{proof}
Using the similar arguments as in the proof of Theorem \ref{th34}, we have
\begin{align*}
\zeta_{\frak{b,d}}(s)=\frac{1}{\Gamma(s)}\sum_{\lambda>0}\frac{r_{\frak{b,d}}(\lambda)}{\lambda^s}\Gamma(s, \lambda)-\frac{1}{s\Gamma(s)}+\frac{1}{\Gamma(s)}\int_{0}^{1}{\rm Z}_{\frak{b,d}}\left(e^{-t}\right)t^{s-1}\,dt.
\end{align*}
Using Lemma \ref{lem34}, for any $\Re(s)>\log_\beta |\frak{d}|$ we have
\begin{align*}
\int_{0}^{1}{\rm Z}_{\frak{b,d}}\left(e^{-t}\right)t^{s-1}\,dt=&\int_{0}^{1}\left(e^{P_{\beta, \frak{d}}\left(\log_\beta t\right)}t^{s-1-\log_\beta |\frak{d}|}+t^{s-1-\log_\beta |\frak{d}|+\gamma}B_{\frak{b,d}}(t)\right)\rd t\\
=&\frac{\log\beta}{\beta^{s-\log_\beta |\frak{d}|}-1}\int_{0}^{1}e^{P_{\beta, \frak{d}}\left(u\right)}\beta^{(s-\log_\beta |\frak{d}|)u}\rd u+\int_{0}^{1}B_{\frak{b,d}}(t)t^{s-1-\log_\beta |\frak{d}|+\gamma}\rd t.
\end{align*}
Since $B_{\frak{b,d}}(t)$ is bound for all $[0,1]$, the last integration in above absolutely convergence for all $\Re(s)>\log_\beta |\frak{d}|-\gamma$,
hence defined a holomorphic function, which completes the proof.
\end{proof}


\bigskip

\noindent
{\sc Nian Hong Zhou\\
Fakult\"at f\"ur Mathematik, Universit\"at Wien\\
Oskar-Morgenstern-Platz~1, A-1090 Vienna, Austria}\newline
Email:~\href{mailto:nianhong.zhou@univie.ac.at; nianhongzhou@outlook.com}{\small nianhong.zhou@univie.ac.at; nianhongzhou@outlook.com}

\end{document}